\documentclass[12pt,reqno,oneside]{amsart}

\usepackage{amsmath,amsthm,amssymb}
\usepackage{graphicx,xcolor}

\usepackage{amssymb}
\usepackage{amsmath,amsfonts,amsthm}
\usepackage{amstext,latexsym,bm}
\usepackage{graphicx}
\usepackage{amsmath}

\usepackage{amsfonts}
\usepackage{amsthm,amscd}
\usepackage{graphicx}
\usepackage{amsmath}
\usepackage{amssymb}
\usepackage{amstext}

\numberwithin{equation}{section}

\theoremstyle{plain}
\newtheorem{maintheorem}{Theorem}

\newtheorem{theorem}{Theorem }[section]
\newtheorem{proposition}[theorem]{Proposition}
\newtheorem{lemma}[theorem]{Lemma}
\newtheorem{corollary}[theorem]{Corollary}

\theoremstyle{definition} \theoremstyle{remark}
\newtheorem{remark}[theorem]{Remark}
\newtheorem{example}[theorem]{Example}
\newtheorem{definition}[theorem]{Definition}

\newcommand{\vep}{\varepsilon}

\newcommand{\cP}{\mathcal{P}}
\newcommand{\cL}{\mathcal{L}}

\newcommand{\cG}{\mathcal{G}}

\newcommand{\cF}{\mathcal{F}}

\newcommand{\cU}{\mathcal{U}}

\renewcommand{\le}{\leqslant}
\renewcommand{\leq}{\leqslant}
\renewcommand{\ge}{\geqslant}
\renewcommand{\geq}{\geqslant}

\usepackage{mathrsfs}
\usepackage[colorlinks=true,linkcolor=blue, urlcolor=red, citecolor=blue, backref]{hyperref}	

\title{A Dynamical Approach to Non-Extensive Thermodynamics}

\author{Artur O. Lopes and Paulo Varandas}

\begin{document}

\maketitle

\begin{abstract}
    We develop a non-extensive thermodynamic formalism for the one-sided shift on a
finite alphabet, inspired by Tsallis' generalization of Boltzmann entropy in statistical physics.
We introduce notions of $q$-entropy, $q$-pressure and $q$-transfer operators
which extend the classical thermodynamic formalism when $q=1$. We prove a Bowen-type relation linking the $q$-pressure with a $(2-q)$-Ruelle
transfer operator and show that $q$-equilibrium states correspond to classical
equilibrium states for a related potential. We establish existence and
uniqueness of $q$-equilibrium states for Lipschitz potentials, prove
differentiability of the $q$-pressure, and obtain variational principles for
both the $q$-pressure and a related asymptotic pressure.
Finally, we study cohomological equations associated with $(2-q)$-transfer
operators and prove differentiable dependence of their solutions on the
potential, yielding an alternative construction of eigenfunctions for classical
Ruelle operators.
We also propose an approach to non-extensive thermodynamics using non-additive formalisms.
\end{abstract}

\section{Introduction} \label{int}

\subsection{Non-extensive entropy} \label{int}

Entropy of invariant measures is a fundamental concept used in dynamical systems to quantify the rate of information production as a system evolves over time. Within the thermodynamic formalism, this notion of metric entropy plays a central role, allowing to relate the topological complexity of the dynamical system with the largest possible complexity offered by the invariant measures  by means of the classical variational principle for topological pressure. In particular, if $\sigma:\Omega \to \Omega$ denotes the usual shift acting on the  space  $\Omega = \{1,2,...,d\}^\mathbb{N}$ and $A: \Omega \to \mathbb R$ is a continuous potential then 
\begin{equation}
    \label{eq:varprincip}
	P(A) =\sup \Big\{ h(\mu) +\int A \,d \mu \,:\, \mu \in \mathcal{M}_{\text{inv}}(\sigma)\Big\}
\end{equation}
where $P(A)$ denotes the topological pressure of $A$ and $h(\mu)$ denotes the Kolmogorov-Shannon entropy of the $\sigma$-invariant probability measure $\mu$, computed through dynamically defined partitions which are weighted according to the Boltzman entropy function $H$ which, to each probability vector $p=(p_1,p_2, \dots, p_d)$ associates the value
\begin{equation}
    \label{eq:Shannoninf}
    H (p) =\,\sum_{i=1}^d  - p_i \log (p_i) 
\end{equation}
(see e.g. \cite{Walters} for the definitions and proof).
In a non-dynamical framework, the previous expression coincides with Shannon information, sometimes referred to as \emph{static entropy}. 

\smallskip
Several scientific papers consider concepts of {\it entropy} which differ from the Kolmogorov-Shannon entropy ~\eqref{eq:Shannoninf}, whose emphasis is to provide a bias on rare events (see e.g. \cite{ABH,BCMV,Cat,GLM,Sa,UmaTsa,
TaVe} and the discussion therein).
In fact, in Physics' literature it is somewhat common to consider a parameterized concepts of entropy, where for certain parameters 
entropy becomes \emph{non-additive}  even if dealing with  independent systems (cf. \cite[p.75 equation (6)]{Abe2}).  In order to elaborate further on that, let us recall some definitions considered in the classical literature on the non-extensive entropy theory (in a non-dynamically framework).
Given $q>0$, the \emph{$q$-entropy} of the probability vector $p=(p_1,p_2,...,p_n)$, introduced by Havrda and Charvat  \cite{HaCha} and Tsallis \cite{Tsa1}, is defined as
\begin{equation} \label{T11}
  H_q (p) =\frac{1}{1-q}  \,(\sum_{i=1}^d  p_i^{q}-1) =\frac{1}{1-q}  \sum_{i=1}^d p_i   ( p_i^{q-1}-1)=\sum_{i=1}^d p_i \log_q \Big( \frac{1}{p_i}\Big)\geqslant 0,
\end{equation}
and, for each $q\neq 1$, the function
\begin{equation}
\label{T1} 
\mathbb R_+\ni u \mapsto \log_q (u) =\frac{1}{1-q} ( u^{1-q}-1)
\end{equation}
is called the \emph{$q$-log function}.
The case $q=1$ which corresponds to Kolmogorov-Shannon entropy.
 It is clear that if $q\neq 1$ and 
 and one wants to maximize $\frac{1}{1-q}  \,(\sum_i  p_i^{q}-1)$  among different  probability vectors $p$ then there exists a {\it bias} which is not present whenever $q=1$. Indeed,
if $q<1$ then $p_i^q > p_i$ and the $q$-entropy will enhance the relative importance of rare events, and 
if $q>1$ one will get the opposite. 
We refer the reader to  \cite{UmaTsa}
for a discussion. Moreover,
for a fixed probability vector $p$, the limit  of $H_q(p),$ as $q\to 1$ is the classical  
Boltzman entropy  $H(p)$.
In what follows we will  emphasize some of the properties of the $q$-entropy function and then establish a comparison with the classical thermodynamic formalism.

\smallskip
Throughout this paper it will be a standard assumption that $q>0$, in which case it makes sense to maximize entropy (see Figure~\ref{fig1}). 
\begin{figure}[h!]
	\centering
	\includegraphics[scale=0.31]{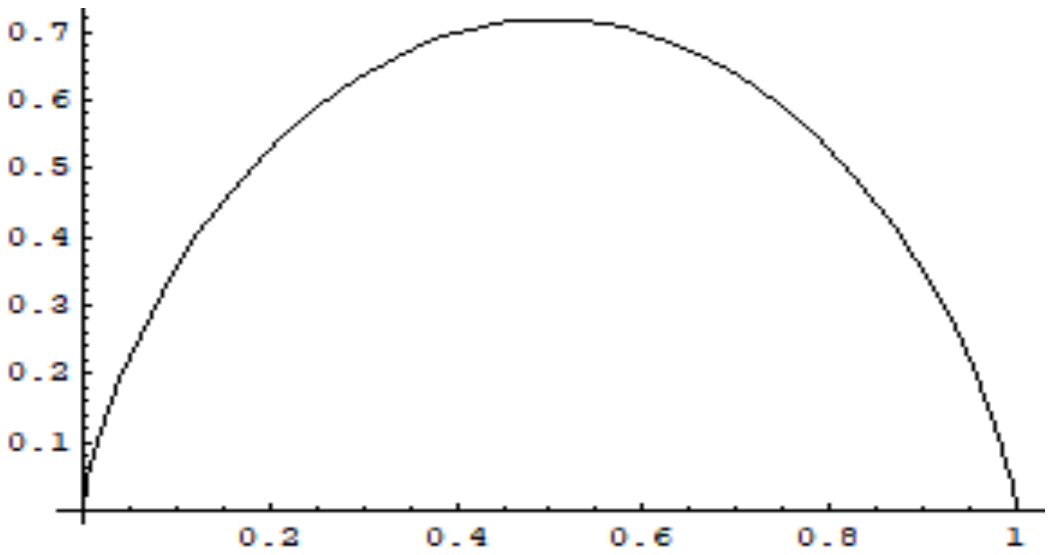}
	\qquad
		\includegraphics[scale=0.31]{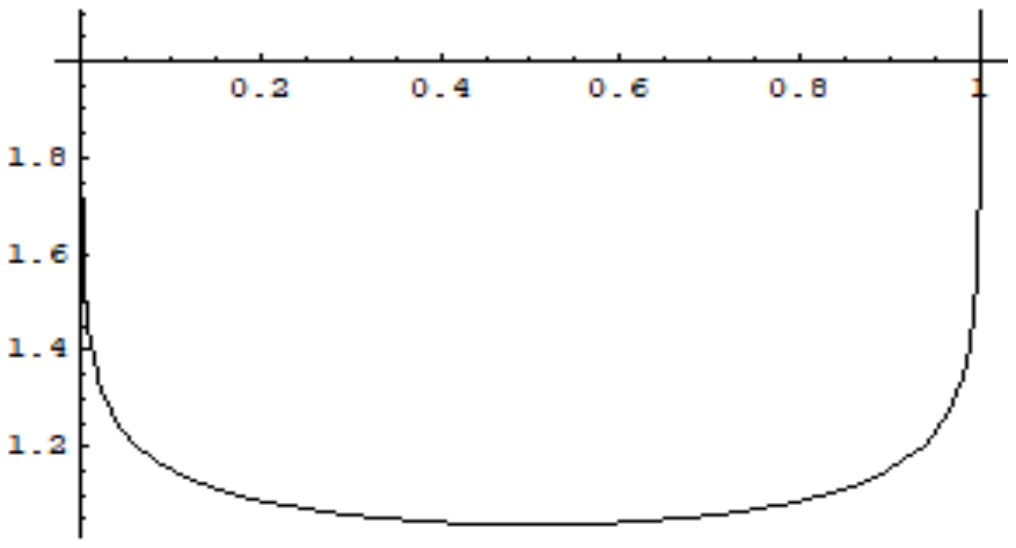}
	\caption{Graph of the function $p_1 \to H_q (p_1,1-p_1)$ when $q=0.9$ (left) and $q=-0.1$ (right) }
	\label{fig1}
\end{figure}
In fact, 
for $q>0$, the $q$-entropy function $p\mapsto H_q(p)$ defined by ~\eqref{T11} is concave as a
function of the probability vector $p=(p_1,p_2,..,p_n)$ because $\frac{d^2 }{d p_j^2}
\frac{1}{1-q} p_j^q =\frac{1}{1-q} q (q-1) p_j^{q-2}<0$ and the finite sum of concave functions is concave.  Moreover,
there is a crucial difference between $q$-log functions for $q=1$ and $q\neq 1$, with striking impact on the corresponding notions of extensive and non-extensive entropies, respectively.
Indeed, for every $q\neq 1$ and $a,b>0$,
 $$ \log_q (a b)
 \;=\; \log_q (a) + \log_q (b) + (1-q) \, \log_q (a) \, \log_q (b)
   \;\neq\;  \log_q (a) + \log_q (b)
 $$
the joint independent probability vector $rs$ obtained from probability vectors $r,s$ satisfies \emph{the non-extensive} relation
 \begin{equation} \label{had}H_q(rs) = H_q(r) + H_q(s) + (1-q)\, H_q(r)\, H_q(s)
\end{equation}
and one recovers additivity above if and only if $q=1$ (cf. \cite{Abe}), which corresponds to the classical (extensive) Boltzman entropy.
We refer the reader to Section~\ref{apen}, to \cite{Yam} or \cite[Appendix, page 84]{Oka} for for more details on $q$-exponential, $q$-logarithmic functions and $q$-entropies.

\smallskip
In this general 
non-dynamical framework,
given constants $q>0$, $\beta \in \mathbb{R}$ and a potential $A: \{1,2,...,n\} \to \mathbb{R}$,
one can define a notion of \emph{$q$-pressure} by the variational relation
\begin{equation} \label{fipre} P_q (\beta \,A) = \sup_p  \Big\{ H_q(p)  +\beta\, \,\sum_{j=1}^d p_j a_j   \Big\},
\end{equation}
where $a_j = A(j)$ and the supremum is taken over all probability vectors $p=(p_1,p_2,...,p_n)$,
in correspondence to what Umarov and Tsallis \cite{UmaTsa} refer to as the first choice of variational problem. A second  alternative, where the pressure is defined 
replacing $\sum_{j=1}^d p_j a_j$ by $\sum_{j=1}^d p_j^q a_j $
will not be considered here.

The latter finds a dual in the Statistical Mechanics literature, where in ~\eqref{fipre} the $q$-pressure is minus the Helmholtz free energy and
the values $a_j$ 
correspond to the values of minus the Hamiltonian.
However, in contrast to the classical pressure function, the pressure function
$\mathbb R \ni \beta \mapsto P_q (\beta \,A)$ defined by ~\eqref{fipre} is not globally convex nor concave and several problems can occur when considering  large ranges of $\beta$
(cf. Remark \ref{rtu}).

\subsection{Non-extensive thermodynamic formalism for the shift} \label{int22}

\medskip
In this paper we  aim to develop a the dynamical non-extensive thermodynamic formalism in parallel with the extensive (classical) setting,
looking for possible matches and discrepancies with some of the well known results in the classical thermodynamic formalism developed in the last decades
(see e.g. \cite{Bala,BecS, Bow, CL3, CS, DWY, FV, GKLM, GL, Keller, KW,LMMS, LR, LM1, PP,Rue,RW,Sar} just to mention a few).
Our work should be considered as
an initial attempt to explore and propose alternative answers to some questions that in our view are fundamental in the theory.
For that reason, we investigate a dynamical version of this class of problems for the one-sided shift
on a finite alphabet, where the extensive thermodynamic formalism (which corresponds to the classical  Thermodynamic Formalism, using the Kolmogorov-Shannon entropy) is extremely well understood.
Although both theories have similar motivation, the non-extensive thermodynamic formalism presents challenges and several conceptual differences
in respect to the classical (extensive) thermodynamic formalism and non-additive thermodynamic formalism which we will now discuss in detail.

\medskip
Consider the shift $\sigma:\Omega \to \Omega$ acting on the symbolic  space  $\Omega = \{1,2,...,d\}^\mathbb{N}$.
We denote by $\mathcal{M}_{\text{inv}}(\sigma)$ the set of $\sigma$-invariant probability measures and by
 $\mathcal{G}$ the set of classical (or extensive) equilibrium states associated to Lipschitz continuous potentials.
These measures are all ergodic, are singular with respect to each other and 
parameterized by the  associated Jacobian function $J_\mu$
(cf. \eqref{JJ} and \cite{GKLM} for more details).
Moreover, probability measures in $\mathcal{G}$ have nice ergodic properties, as they are mixing,
have exponential decay of correlations for H\"older continuous observables and satisfy large deviation principles
(see e.g. \cite{Bow,L3}).
Furthermore, by Rohklin's formula, the Kolmogorov-Shannon entropy $h(\mu)$ associated to $\mu \in \mathcal{G}$ is given by
\begin{equation}\label{eq:Rohklin}
h(\mu)=- \int \log J_\mu\, d \mu.
\end{equation}
The extensive pressure $P(A)$ of
a Lipschitz
continuous potential $A:\Omega \to \mathbb{R}$  satisfies the classical variational principles
\begin{equation} \label{Pia4}
	P(A) =\sup \Big\{ h(\mu) +\int A \,d \mu \,:\, \mu \in \mathcal{M}_{\text{inv}}(\sigma)\Big\}
	= \sup \Big\{ h(\mu) + \int A \, d \mu   \,:\, \mu \in \mathcal{G}\Big\}
\end{equation}
(the second equality in \eqref{Pia4} is due to the upper-semicontinuity of the entropy map and that any invariant probability can be weak$^*$ approximated, and in entropy, by a probability measure in $\mathcal{G}$, cf. \cite{L3}).
It is also well known that there exists a unique
equilibrium state $\mu_A \in \cG$ 
which maximizes the previous expression.

\smallskip
Given a Lipschitz continuous potential $A: \Omega \to \mathbb R$, Ruelle's theorem relates special features of the equilibrium state $\mu_A$ with leading eigenvalue and eigenfunction for the \emph{Ruelle transfer operator}  $\mathcal{L}_{A} : C^0(\Omega,\mathbb R) \to C^0(\Omega,\mathbb R)$, which is the bounded linear operator given by
\begin{equation}\label{AA}
\mathcal{L}_{A}(f) = \sum_{a=1}^d e^{A(ax)}\, f(ax)
\end{equation}
for every continuous function $f:\Omega \to \mathbb{R}$, where
$ax \in \Omega$ denotes the sequence in $\sigma^{-1}(x)$ starting with the symbol $a$.
Namely, the equilibrium state $\mu_A$ is such that $\mu_A= h_A \nu_A$ where $\cL_A h_A =\lambda_A h_A$, $\cL_A^* \nu_A =\lambda_A \nu_A $
and $\lambda_A=e^{P(A)}>0$ is the simple leading eigenvalue of $\mathcal L_A$
(see e.g. \cite{Bala}).
We proceed to extend the above concepts to the non-extensive framework.

\bigskip
\subsubsection*{Non-extensive thermodynamic quantities}
Let us now define the concepts of 
$q$-entropy of an invariant measure in $\mathcal G$,  $q$-pressure function and $q$-equilibrium states, in a way that one can recover the classical framework as the limit of such quantities as $q\to 1$.
Inspired by Rokhlins formula  we define, for each $q>0$,  $q \neq 1$,  the \emph{$q$-entropy} of a probability measure $\mu\in \cG$ as
\begin{equation}
\label{HQ}
H_q(\mu) =  \int \log_q \Big(\frac{1}{J}\Big) d \mu= \frac{1}{1-q}  \int \big(J^{q-1} -1\big)  d \mu.
\end{equation}
 We extend the concept of $q-$entropy for probability measures $\mu \in \mathcal{M}_{\text{inv}}(\sigma)$ and show that the $q$-entropy function is concave and upper semi-continuous (see Definition \ref{uod}, and Lemmas \ref{le:Hconcave} and \ref{uup}).
In the extensive framework Boltzman entropy is concave and  the Kolmogorov-Shannon entropy is an affine function on the convex set of  $\sigma$-invariant probability measures (see   Theorem 8.1 page 183 in \cite{Walters}).
 However,  the dynamical $q$-entropy is concave when $0<q \leq 1$ (cf. Example \ref{maio}).

In view of the second equality in ~\eqref{Pia4}, given a continuous potential $A: \Omega \to \mathbb R$ we defined the dynamical \emph{$q$-pressure function} of  $A$ by the variational relation
\begin{equation}  \label{PP234}
P_q (A)=  \sup\,\Big\{  H_q (\mu) + \int  A\,d \mu  \,:\, \mu \in \mathcal{G}\Big\},
\end{equation}
and we will say that  
$\mu_q$ is a \emph{$q$-equilibrium state} with respect to the potential $A$ is an invariant Gibbs measure attaining the supremum in ~\eqref{PP234}.
By definition,  the previous supremum is taken over the space of extensive Gibbs equilibrium states $\cG$, hence if non-extensive equilibrium states exist then these are equilibrium states for the classical thermodynamic formalism. 

\smallskip
In order to develop a spectral approach for the non-extensive thermodynamic formalism one needs to consider suitable transfer operators.  Given $q>0$, with $q\neq 1$, the inverse of $\log_q$ is the \emph{$q$-exp} function
defined by
\begin{equation} \label{T1e} u \mapsto  e_q^u=\exp_q (u) =  (1  + (1-q) u) ^{\frac{1}{ 1-q}}.
\end{equation}
This suggests to consider the family of transfer operators $\mathcal{L}_{A,q}: C^0(\Omega,\mathbb R) \to C^0(\Omega,\mathbb R)$
as
\begin{equation}\label{T2e}
\mathcal{L}_{A,q}(f)
= \sum_{a=1}^d e_{q}^{A(ax)}\, f(ax)
= \sum_{a=1}^d [1  + (1-q) A(ax)] ^{\frac{1}{ 1-q}}\, f(ax), \quad f\in C^0(\Omega,\mathbb R), 
\end{equation}
whenever the latter is well defined.

\smallskip
At this point there are major technical and conceptual differences in respect to the extensive transfer operators, at both conceptual and technical viewpoints. 
From the technical viewpoint,
the $q$-exp function behaves in a quite intricate way: (i) if $q>1$ then $\exp_q (u)$ is positive  if $u<\frac{1}{q-1}$; (ii)
if $0<q<1$, the value  $\exp_q (u)$ is positive  whenever $u>\frac{1}{q-1}$ and is complex otherwise (up to $q=1/2$, where is always a non-negative real number). Moreover, for $q>0$, the $q$-exp function is convex. The main conceptual differences
in respect to the extensive thermodynamic formalism are described in the next subsection.

\subsection{Non-extensive operators: duality and non-additive thermodynamic formalism}

Let us now describe the dynamical framework for non-extensive thermodynamic formalism, developed in the paper. The first key observation is that there exists
a 
non-standard relation on the $q$-parameter interval $I=(0,2)$ at $q=1$: the $q$-equilibrium states and the $q$-pressure function 
relates to the $(2-q)$-Ruelle operator.
More precisely, such a relation
$$
q-\text{pressure function}\;
P_q(\cdot)
\qquad
\leftrightsquigarrow
\qquad
 \; \text{transfer operator}\; \mathcal{L}_{A,2-q}(\cdot)
$$
is formalized in Theorem~\ref{mthm1}, which offers a Bowen-type formula, where we prove that solutions of a log-functional equation involving the $(2-q)$-transfer operators
are related to a zero of a Bowen-type equation involving the $q$-pressure function.
In this way, among the non-extensive $q$-equilibrium states associated to a certain Lipschitz continuous potential $A$ there exist extensive (classical) equilibrium states for a related Lipschitz continuous
(cf. Theorem~\ref{mthm1} for the precise formulation).

\smallskip
 The statement of Theorem~\ref{mthm1} is far from establishing a dictionary between extensive and non-extensive thermodynamic formalism.
 A fact that reinforces the latter is that
 the $q$-pressure function  $\beta \to P_q (\beta A)$ is neither convex nor concave on $\beta$ for large ranges of $\beta$ (see Remark \ref{rtu}), which creates technical problems for the use of the classical Legendre transform.
In this way, setting the duality of MaxEnt method and pressure 
in the non-extensive dynamical 
via the classical Legendre transform formalism seems not to  find parallel in the non-extensive framework (compare the
derivatives of the pressure functions in \cite[Proposition 4.10]{PP},  Example \ref{supex} and  \eqref{loo7} in case  $q=1/2$).
Furthermore, the lack of convexity of the $q$-pressure function seems to contribute to
a much richer structure on the space of $q$-equilibrium states and there are examples where
the log-functional equation has non-unique solutions (see Section~\ref{exe2}).

\smallskip
Another striking difference to the extensive thermodynamic formalism can be observed at the level of $q$-transfer operators, for each non-negative $q\ne 1$.
Indeed, while the usual exponential function
$
\exp: (\mathbb R,+) \to (\mathbb R_+,\times)
$
is a group homomorphism
and the leading eigenfunction of the classical transfer operators
can be obtained by normalized iterates $\cL_{A,1}^n(1)$, one has that $e_q^{a +b}\neq e_q^a \, e_q^b$ for every $q>0$, $q\neq 1$, and every
non-zero $a,b \in \mathbb{R}$. In this way, the weights appearing in the iterates $\cL_{A,q}^n(1)$ are much more intricate than Birkhoff sums, which are classical to Boltzman-equilibrium statistics.
In order to overcome this fact 
we introduce a sequence $(\mathfrak{L}_n)_{n\geqslant 1}$ of transfer operators \eqref{AAqinterat} adapted to the shift and a family $\Phi=(\varphi_n)_{n\geqslant 1}$ of Lipschitz continuous potentials $\varphi_n: \Omega \to \mathbb R$ given by ~\eqref{vai1}, which falls in the realm of non-additive thermodynamic formalism.
The sequence $\Phi=(\varphi_n)_{n\geqslant 1}$ has extremely mild additivity, i.e. it is just asymptotically sub-additive. Nevertheless one can show that
the $q$-asymptotic pressure $\mathfrak{P}^q(A)$, defined to represent the exponential growth rate of the norms of transfer operators $\mathfrak{L}_n$,
satisfies a variational principle involving the Kolmogorov-Shannon metric entropy and that equilibrium states always exist
(see Theorem~\ref{Mpressure} and Lemma~\ref{le:phing} for the precise statements).

\smallskip
In Theorem~\ref{Mddeste} we prove that the solutions of the functional equation concerning the
$(2-q)$-Ruelle transfer operators vary differentiably with the potential on a neighborhood of the original potential $A$, provided that this is normalizable.
This result can also be used to provide  an alternative argument for the existence of eigenfunctions for transfer operators, using the implicit function theorem.

\smallskip
Finally, there are several natural notions of equilibrium states appearing motivated by the non-extensive thermodynamic formalism, and
it is of huge interest to understand their interplay. A general relation between two of these notions, namely $q$-equilibrium states and $q$-asymptotic
equilibrium states, still seems out of reach due to the much different nature of the non-extensive objects (cf. Remark~\ref{rmkcompeq}). Nevertheless,
as Theorem~\ref{mthm1} establishes a bridge between non-extensive $q$-equilibrium states of a Lipschitz continuous potential $A$ and extensive equilibrium states for a modified Lipschitz continuous potential, it is natural to look for the dependence of these objects on the potential $A$, and to
understand its possible applications.

\subsection{Organization of the paper} \label{int321}

For the readers' convenience let us give a brief des\-cription on the organization of this paper.

\smallskip
The main results of this paper
are stated in Section~\ref{sec:statements}:
Theorem~\ref{mthm1} offers a 
duality between the $q$-pressure function
and the $(2-q)$-Ruelle operator,   
Theorem~\ref{Mpressure} establishes a variational principle for the non-extensive pressure function, while Theorem~\ref{Mddeste} describes solutions of a certain cohomological equation.

\smallskip
Our main focus in the present paper is the  study of the concept of $q$-entropy for probability measures $\mu \in \mathcal{G}$.
In Section \ref{d-q-ent} we describe properties of $q$-entropy functions $H_q(\cdot)$ for Gibbs and Bernoulli measures.
Later, in Section \ref{Dyn} we consider a dynamical point of view for  $q$-pressure under the non-extensive framework.
First we analyze the  case where the probability $p$  is of the form $p=(p_1,p_2,...,p_n)$ and we exhibit the maximal $q$-pressure probability for a given potential (cf. Section \ref{pres1}).  This corresponds to the case in which the dynamical system does not intervene and it is in consonance with most results in non-extensive Statistical Mechanics  
(see e.g. \cite[p. L71]{Curado}), as our definitions are associated to the so called  first choice of MaxEnt method as described in  \cite{Curado, Tsa1, Tsa3}.   Our main  objective in  Section \ref{pres1} 
is to allow the reader to understand the nature of the results that will be extended in later sections, and that will contemplate the dynamical viewpoint.
In Subsection \ref{FIR} considers the non-extensive point of view for the  $q$-pressure in  a dynamical setting.

\smallskip
In Section~\ref{prelim} we prove Theorem~\ref{mthm1}.
In order to do so, in Section \ref{qR} we  consider methods related to the problem of solving the $(2-q)$-Ruelle operator equation.   In Subsections \ref{1/2} and \ref{qq}, for $0<q<1$, we exhibit potentials $A$ for which we can find solutions for the  $(2-q)$-Ruelle operator equation \eqref{atchew1}, as solutions for the classical Ruelle operator equation with respect to some modified potentials.

\smallskip
In Section \ref{SubP} we relate the $q$-asymptotic pressure function with results from classical non-additive thermodynamic formalism. First, in
Subsection~\ref{SubP1} we describe the asymptotic sub-additive property of the family of potentials $(\varphi_n)_{n\geqslant 1}$, in the case $0<q<1$.
This will be crucial in the proof of Theorem~\ref{Mpressure}, which appears in Subsection~\ref{SubP2}.

\smallskip
On Section \ref{apen1} we present a proof of the existence of solutions for $q$-Ruelle Theorem equation using a  version of the implicit function theorem (see \cite{Lang,An}), and prove Theorem~\ref{Mddeste}.

\smallskip
In Section~\ref{exe2} is devoted to specific classes of examples on which the non-extensive objects can be computed.
In the case of locally constant potentials  $A:\{1,2\}^\mathbb{N} \to \mathbb{R}$ which depend only on the first two coordinates, that is $A(x)=A(x_1,x_2)$, we will show that the dynamical $q$-equilibrium state is a Markov probability.
Using a different technique,  Example~\ref{explimeq}
we present explicit solutions for the pressure problem in a non-extensive setting by exploring a relation between these and the $q$-Ruelle operator acting on Lipschitz continuous potentials, hence obtaining  a non-extensive version of the Ruelle's Theorem.

The remainder of the paper consists of four appendices.
Appendix A in Section \ref{dpreq} describes an explicit expression for the derivative of the pressure in the case $q=1/2$. Appendix B in   Section \ref{sec:shannon} describes the point of view of dynamical partitions for the non-extensive case (somehow related to \cite{MeVe}). In   Appendix C Section \ref{renyi} we briefly relate  the results of the non-extensive entropy described in our text with Renyi entropy.
Finally, the Appendix D (Section~\ref{apen}) contains a self-contained description of several general and useful properties for the $q$-log and $q$-exp functions; some of them  are used throughout the paper.

\color{black}

\section{Main results} \label{sec:statements}

\subsection{Setting} \label{ddyn}

\smallskip
Let $\sigma: \Omega \to \Omega$ denote the one-sided shift on $\Omega=\{1,2,...,d\}^\mathbb{N}$, endowed with the usual distance (which we denote by $\mbox{dist}$), which makes it diameter one.
Let $C^0(\Omega,\mathbb R)$ be the Banach space of all continuous functions on $\Omega$ endowed with the $C^0$-topology
and let $\mbox{Lip}(\Omega) \subset C^0(\Omega,\mathbb R)$ be the subspace of Lipschitz continuous functions endowed with the
norm 
\begin{equation}
    \label{eq:Lipnorm}
    \|f\|_{\text{Lip}} = \|f\|_{C_0} + |f|_{\text{Lip}}
    \quad\text{where}\quad
    |f|_{\text{Lip}} =\sup_{x\neq y} \frac{|f(x)-f(y)|}{\mbox{dist}(x,y)}
\end{equation}
Given a Lipschitz continuous function  $A:\Omega \to \mathbb{R}$
the \emph{Ruelle transfer operator} associated to the potential $A$ is the linear operator $\mathcal{L}_{A}$ given by \eqref{AA}.
Its dual, denoted by $\mathcal{L}_A^* $, acts on the space  $\mathcal M_1(\Omega)$ of probability measures on $\Omega$
by the duality relation
$$
\int f \,d (\mathcal{L}_A^* (\mu_1)) =\int \mathcal{L} (f) \, d \mu_1 \qquad \forall f\in C^0(\Omega,\mathbb R).
$$
It is clear from the definition that $\mathcal{L}_{A}$ preserves the space $\mbox{Lip}(\Omega)$ of Lipschitz continuous pontentials on $\Omega$.
In case there exists
 $c\in \mathbb{R}$ and a continuous function $\varphi:\Omega \to \mathbb{R}_+$ so that the \emph{Ruelle operator equation}
 \begin{equation} \label{atchew111}
 \mathcal{L}_{A} \Big( \lambda^{-1} \frac{\varphi}{(\varphi \circ \sigma)\,}\Big) (x)= \sum_{a=1}^d e^{A(a x) + \log(\varphi (a x)) -  \log(\varphi)(x) - c}=1\quad
 \text{for every}\; x \in \Omega
\end{equation}
holds then we say that $A$ is a \emph{normalized potential}, that $\varphi$ is the \emph{eigenfunction} and that the constant $\lambda=e^c>0$ is the
the \emph{eigenvalue} associated to $\mathcal{L}_{A}.$
For short we will say that the pair $(\varphi,c)$ solves the (extensive) Ruelle operator equation.

Ruelle's theorem ensures that
for any Lipschitz continuous potential  $A:\Omega \to \mathbb{R}$ there exist
 $c\in \mathbb{R} $ and continuous positive  Lipschitz continuous function $\varphi:\Omega \to \mathbb{R}$ satisfying
 \eqref{atchew111}, and that the topological pressure $P(A)$
 of the shift $\sigma$ with respect to the potential $A$ coincides with $\log \lambda$ (see e.g. \cite{Bala,PP}).

\smallskip
A \emph{Jacobian} $J:\Omega 
\to \mathbb{R}$ is a positive Lipschitz continuous function such that
\begin{equation}\label{JJ}
\mathcal{L}_{\log J} (1)(x)=\sum_{a=1}^d J(ax)=1 \quad \text{for every $x\in \Omega$.}
\end{equation}
For each Jacobian $J$ there exists a unique probability measure $\mu=\mu_{\log J}$, fully supported, such that
$\mathcal{L}_{\log J}^*(\mu)=\mu$, to which we will refer as the (extensive) \emph{equilibrium state} of the potential $\log J$
(these are often called \emph{Gibbs measures} cf. \cite{Bow}).  The space
\begin{equation}\label{eqG}
\mathcal G = \Big\{  \mu_{\log J} \colon J \; \text{is a Jacobian} \Big\}
\end{equation}
of all Gibbs measures associated to the extensive thermodynamic formalism is an infinite dimensional manifold (cf. \cite{LR1}).
As there exists a bijective relation between the space of Jacobians and the associated equilibrium states,
the elements in $\mathcal{G}$ can be either parameterized by their elements $\mu$ or by their Jacobians $J$   (we refer the reader to \cite{GKLM,LR1}
for more details).
Our main goal is to determine whether there exist $q$-equilibrium states - recall these are probability measures on $\mathcal{G}$ attaining the supremum in ~\eqref{PP234} -  and to build possible bridges between the classical (extensive) equilibrium states and $q$-equilibrium states.

\subsection{Statements}
The starting point for our study of the non-extensive thermodynamic formalism is the following duality between the $q$-pressure function
and the $(2-q)$-Ruelle operator equation. In this way, the next theorem provides not only sufficient conditions for the solution of a non-extensive Bowen type equation, as it ensures that the $q$-equilibrium state for a given Lispchitz potential $A$ is a  classical equilibrium for a pressure problem for another potential  within the extensive thermodynamic formalism.

\begin{maintheorem}\label{mthm1}
Let $q>0$ and $A:\Omega \to \mathbb{R}$ be  a Lipschitz continuous potential. If there exists a constant
 $c\in \mathbb{R} $ and a continuous function $\varphi:\Omega \to \mathbb{R}$ so that
 \begin{equation} \label{atchew1}
 \sum_{a=1}^d e_{2-q}^{A(a x) + \varphi (a x) - \varphi(x) - c}=1 \quad \text{for every $x \in \Omega$}
\end{equation}
and that all summands above are strictly positive then
$P_q(A) =c$.
 Moreover, the following properties hold:
 \begin{enumerate}
     \item
     there exists a $q$-equilibrium state associated to $A$  that coincides with  the  equilibrium state
     for the 
     potential  $\log J$, where
$J(x)= e_{2-q}^{A( x) + \varphi ( x) - \varphi(\sigma(x)) - c};$
     \item
\begin{equation}  \label{eeste3}
P_q \Big( -\log_q \Big(\frac{1}{J}\Big) \Big) =0,
\end{equation}
where $J$ is the classical Jacobian described by 
\begin{equation}  \label{eeste1}-\log_q \Big(\frac{1}{J}\Big)=  \frac{-1 + (1 + (A+ \varphi  - (\varphi\circ \sigma) - c  )\, (q-1))^{-1} }{q-1 }.
\end{equation}
 \end{enumerate}
\end{maintheorem}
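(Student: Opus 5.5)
The plan is to exploit the algebraic identity between $e_{2-q}$ and $\log_q$. Write $B:=A+\varphi-\varphi\circ\sigma-c$ and $J:=e_{2-q}^{B}$. Since $e_{2-q}^u=(1+(q-1)u)^{\frac{1}{q-1}}$, we get $J^{q-1}=1+(q-1)B$, that is,
$$\log_q\Big(\frac1J\Big)=\frac{J^{q-1}-1}{1-q}=-B .$$
This is exactly \eqref{eeste1}. Hypothesis \eqref{atchew1} states that $\sum_a J(ax)=1$ with all summands positive, so $J$ is a Jacobian in the sense of \eqref{JJ}. Lipschitz continuity of $J$ would follow from that of $A$ and $\varphi$ (if $\varphi$ is merely continuous, I would first note that the equation forces the needed regularity, or work with continuous Jacobians). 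Hence $\mu_J:=\mu_{\log J}\in\mathcal G$. By \eqref{HQ} and $\sigma$-invariance, which kills the coboundary $\varphi-\varphi\circ\sigma$,
$$H_q(\mu_J)+\int A\,d\mu_J=\int(-B+A)\,d\mu_J=c .$$
Thus $P_q(A)\ge c$.

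For the reverse inequality, let $\mu=\mu_{\log K}\in\mathcal G$ be arbitrary. Invariance lets us replace $A$ by $B+c$, so
$$H_q(\mu)+\int A\,d\mu-c=\int\Big[\log_q\Big(\frac1K\Big)-\log_q\Big(\frac1J\Big)\Big]d\mu .$$
Using $\mathcal L_{\log K}^*\mu=\mu$, I rewrite the right-hand side as $\int\sum_a K(ax)\big[\log_q(1/K(ax))-\log_q(1/J(ax))\big]\,d\mu(x)$. It then suffices to prove the pointwise, purely static inequality
$$\sum_a K_a\log_q\Big(\frac1{K_a}\Big)\le\sum_a K_a\log_q\Big(\frac1{J_a}\Big)\qquad\text{for probability vectors }(K_a),(J_a),$$
a $q$-analogue of Gibbs' inequality. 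Since the inequality must hold for all pairs, including $K\ne J$, it has to be verified directly rather than via an optimality argument.

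This inequality is the step I expect to be the main obstacle. The inequality is equivalent to $\frac{1}{1-q}\sum_a K_a\big(K_a^{q-1}-J_a^{q-1}\big)\le 0$. I would first check it through a convexity argument: either by writing $K_a^{q-1}$ versus $J_a^{q-1}$ and applying the tangent-line inequality for the concave/convex function $t\mapsto t^{q}$ or $t\mapsto t^{q-1}$, separately in the cases $0<q<1$ and $q>1$, or by Hölder's inequality applied to $\sum_a K_a J_a^{q-1}$ under the constraint $\sum_a J_a=1$. If the pointwise inequality fails in general, the fallback is to use the concavity of $\mu\mapsto H_q(\mu)$ (Lemma~\ref{le:Hconcave}) along the segment of Jacobians $tK+(1-t)J$. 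One would then show that the directional derivative at $J$ is nonpositive, which amounts to a first-order condition satisfied because of the $(2-q)$-normalization \eqref{atchew1}. Once the reverse inequality holds, $P_q(A)=c$ and $\mu_J$ attains the supremum, proving item (1).

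Item (2) is then immediate. The potential $-\log_q(1/J)$ equals $B=A-c+\varphi-\varphi\circ\sigma$. Since $P_q$ is defined as a supremum over invariant measures, adding a constant shifts it by that constant and adding a coboundary leaves it unchanged. Therefore $P_q(-\log_q(1/J))=P_q(A)-c=0$, which is \eqref{eeste3}.
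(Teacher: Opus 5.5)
\textbf{There is a genuine gap at the upper bound, and it cannot be closed because the statement itself is false for $q\neq1$.}

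The step you flag as the main obstacle is not merely hard. The pointwise $q$-Gibbs inequality $\sum_a K_a\log_q(1/K_a)\le\sum_a K_a\log_q(1/J_a)$ is false for every $q\neq 1$, and neither of your fallbacks can rescue it. Put $g(K)=\sum_a K_a\big[\log_q(1/K_a)-\log_q(1/J_a)\big]$ on the simplex. Then $g(J)=0$. Since $\frac{d}{dt}\big[t\log_q(1/t)\big]=\log_q(1/t)-t^{q-1}$, one gets $\partial g/\partial K_a|_{K=J}=-J_a^{q-1}$.

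This gradient is not constant in $a$ unless $J$ is uniform. So the derivative of $g$ at $J$ in a tangent direction $v$ (with $\sum_a v_a=0$) is $-\sum_a v_aJ_a^{q-1}\neq 0$ for suitable $v$. Hence $g>0$ for some probability vectors $K$ near $J$. For $q=1$ the gradient is the constant $-1$, which is exactly why the classical argument works.

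The same computation disposes of your concavity fallback. In the Bernoulli case the directional derivative at $J$ toward $K$ is $-\sum_a(K_a-J_a)J_a^{q-1}$, which is positive for suitable $K$. The normalization $\sum_aJ(ax)=1$ does not make $a\mapsto J(ax)^{q-1}$ constant. H\"older or tangent-line estimates cannot prove a false inequality.

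In fact $P_q(A)\le c$ fails, and with it the statement. Take $q=\frac12$, $d=2$, $A(x)=a_{x_1}$ with $a_1=2-2\sqrt5$ and $a_2=2-\sqrt5$, together with $\varphi\equiv0$ and $c=0$. Since $e_{3/2}^u=(1-u/2)^{-2}$, one has $e_{3/2}^{a_1}=\frac15$ and $e_{3/2}^{a_2}=\frac45$. So \eqref{atchew1} holds with positive summands, and $J$ is the Bernoulli $(\frac15,\frac45)$ Jacobian. For the Bernoulli measure $\mu\in\mathcal G$ with weights $(\frac1{10},\frac9{10})$,
\[
H_{1/2}(\mu)+\int A\,d\mu=\tfrac15\big(\sqrt{10}-1\big)+\tfrac95\Big(\tfrac{\sqrt{10}}{3}-1\Big)+2-\tfrac{11}{10}\sqrt5=\tfrac45\sqrt{10}-\tfrac{11}{10}\sqrt5>0=c,
\]
since $(\frac45\sqrt{10})^2=6.4>6.05=(\frac{11}{10}\sqrt5)^2$. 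Hence $P_q(A)>c$, $\mu_J$ is not a $q$-equilibrium state, and $P_q(-\log_q(1/J))=P_q(A)>0$.

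Your lower bound $P_q(A)\ge c$ is correct, and so is the translation/coboundary reduction in item (2). Only the upper bound fails, and this is a defect of the statement rather than of your route. The paper argues exactly as you do. Its upper bound comes from Proposition~\ref{esteaq1}, which rests on the same inequality, namely \eqref{oret} via \eqref{rt15}. The final display of its proof replaces $\log_q(1/\tilde J)$ by $-(A+\varphi-\varphi\circ\sigma-c)$ for every $\tilde J$, which is valid only at $\tilde J=J$.

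A minor point: your identity $-\log_q(1/J)=A+\varphi-\varphi\circ\sigma-c$ is correct and consistent with \eqref{prop16}. It is not, however, what \eqref{eeste1} literally says. Writing $B=A+\varphi-\varphi\circ\sigma-c$, the right-hand side of \eqref{eeste1} equals $-B/(1+(q-1)B)$, so that formula appears to be misprinted.
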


\begin{remark}
Theorem~\ref{mthm1} makes explicit a symmetry between the parameter $q$ of the
$q$-pressure and the parameter $\tilde q=2-q$ appearing in the $\tilde q$-Ruelle transfer operator equation ~\eqref{atchew1}. In the extensive framework
(corresponding to $q=1$) we recover that the pressure function can be obtained through the leading eigenvalue of the classical transfer operator.
We will refer to a solution $\varphi$ of \eqref{atchew1} as the \emph{$(2-q)$-non-extensive log-eigenfunction} (eigenfunction for short) and to the
constant $c$ as the \emph{$(2-q)$-non-extensive log-eigenvalue} (eigenvalue for short).
The existence of a continuous function $\varphi$ and $c\in \mathbb{R}$ solving \eqref{atchew1} is a non-extensive version of the Ruelle operator theorem  \cite[Theorem 2.2]{PP}.
    \end{remark}

\begin{remark}\label{rmk:nonuniquesol}
The pair of solutions $\varphi,c$  in Theorem~\ref{mthm1} \emph{need not to be unique}
(see  Example \ref{jana}).
 Moreover, in Example~\ref{explimeq} we exhibit a Lipchitz continuous potential $A$ where there exist a pair of solution $\varphi,c$ for \eqref{atchew1}, but one of the summands    can take the value zero for some $x\in \Omega$.
In this way, in the non-extensive setting one should not expect a full extension of the classical Ruelle Theorem.
\end{remark}

\smallskip
It is worth mentioning that whenever one writes an expression like \eqref{atchew1} we are implicitly assuming that all values are well defined and, in most cases,  we will assume that all summands are strictly positive for every $x$. Let us discuss a bit
further the class of potentials that one considers. Given $1< q \leqslant 2$ consider the open subset (in the Lipschitz topology)
$$
\cF_{q} =\Big\{ A\in \mbox{Lip}(\Omega)\colon A(x) >\frac{1}{1-q}, \; \forall x\in \Omega\Big\}.
$$
This open set varies continuously with $q$, in the Hausdorff distance.
At a first sight Theorem~\ref{mthm1} could suggest to
consider, for each $A\in \cF_{q}$, the $q$-transfer operator
\begin{equation*}\label{AAq}
\mathcal{L}_{A,q}(f) = \sum_{a=1}^d e_{q}^{A(ax)}\, f(ax), \qquad f\in C^0(\Omega,\mathbb R) 
\end{equation*}
However, in virtue of the properties of $q$-exponentials (cf.  Section~\ref{apen}) their iterates 
$$
\mathcal L_{A,q}^n (f)(x) = \sum_{\sigma^n(y)=x} \, \Big[ \prod_{j=0}^{n-1} e_q^{ A(\sigma^j(y))} \Big] \, f(y)
$$
seem not suitable to study the non-extensive thermodynamic formalism.
For that reason we  will consider the family of transfer operators
$(\mathfrak{L}_n)_{n \in \mathbb{N}}$, defined by
\begin{equation}\label{AAqn}
\mathfrak{L}_n (f) (x)= \sum_{\sigma^n(y)=x}  e_q^{ A(y) + A (\sigma(y)) +... + A(\sigma^{n-1}(y) ) } \, f(y)
\end{equation}
for every $f\in C^0(\Omega,\mathbb R) $ 
and $x\in \Omega$, which can be written as 
\begin{equation}\label{AAqinterat}
\mathfrak{L}_n (f) (x) = \sum_{\sigma^n(y)=x}  e^{\varphi_n(y)} f(y)
\end{equation}
where
\begin{equation} \label{vai1}
\varphi_{n}(y)= \frac{1}{1-q} \, \log  \left(\, 1 + (1-q) \,
\sum_{j=0}^{n-1} A(\sigma^j(y))\,\right), \; n \in \mathbb{N}.
\end{equation}
The family of potentials $(\varphi_n)_{n\ge 1}$ is asymptotically sub-additive (cf. Lemma~\ref{le:phing}). However, this family of potentials is not almost additive and does not seem to satisfy any of the sufficient conditions  that allow to study the non-additive thermodynamic formalism developed in previous works
(see \cite{Barr0,CFH,Falconer,VZ} and references therein), which does not allow us to use of spectral theory
to study non-extensive thermodynamic formalism.
Nevertheless we prove the following variational principle.

\begin{maintheorem} \label{Mpressure}  Given $0<q<1$, a Lipchitz continuous potential $A: \Omega \to \mathbb{R}$ the limit
\begin{equation} \label{ratim}
\mathfrak{P}^q(A) = \lim_{n \to \infty} \frac{1}{n} \log \mathfrak{L}_n (1) (x_0)
\end{equation}
exists and is independent ot the point $x_0$.
Moreover, 
 \begin{equation} \label{eefe1}
\mathfrak{P}^q (A)\,=\,\max\,\Big\{ h(\nu)+ \lim_{n \to \infty} \frac{1}{n} \int \varphi_n\, d \nu \colon {\nu \in \mathcal{M}_{\text{inv}}(\sigma)}\Big\},
 \end{equation}
 where $ h(\nu)$ is the extensive Kolmogorov-Shannon entropy of $\nu$.
\end{maintheorem}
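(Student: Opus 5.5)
The plan is to reduce Theorem~\ref{Mpressure} to the variational principle for asymptotically sub-additive sequences of potentials due to Feng--Huang (cf.\ \cite{CFH,Falconer}). The input from the paper is Lemma~\ref{le:phing}, which says that $\Phi=(\varphi_n)_{n\geqslant 1}$ is asymptotically sub-additive when $0<q<1$. Throughout I assume, as is implicit in \eqref{AAqn}, that $1+(1-q)S_nA>0$ on $\Omega$ for all $n$, where $S_nA=\sum_{j=0}^{n-1}A\circ\sigma^j$. Concretely I will work in a regime where these quantities are bounded below by a uniform constant $\delta>0$, so that each $\varphi_n$ is well defined and Lipschitz. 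This positivity is the first thing I would pin down: if the uniform lower bound fails, the distortion estimate below breaks.

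\textbf{Step 1 (bounded distortion).} Let $y,y'$ lie in the same $n$-cylinder $[w]$. Since $A$ is Lipschitz and the metric contracts along cylinders, $|S_nA(y)-S_nA(y')|\leqslant C$ with $C$ independent of $n$. Both arguments of the logarithm are $\geqslant\delta$, so
$$
|\varphi_n(y)-\varphi_n(y')|\leqslant \frac{1}{1-q}\log\Big(1+\frac{(1-q)C}{\delta}\Big)=:K.
$$
The preimages $y\in\sigma^{-n}(x_0)$ are exactly $y=wx_0$ for the $d^n$ words $w$ of length $n$. Hence
$$
e^{-K}Z_n\leqslant \mathfrak{L}_n(1)(x_0)\leqslant Z_n,
\qquad\text{where } Z_n=\sum_{|w|=n}\exp\Big(\sup_{[w]}\varphi_n\Big).
$$
It follows that the existence of the limit in \eqref{ratim}, and its independence of $x_0$, are equivalent to the existence of $\lim_n \frac1n\log Z_n$.

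\textbf{Step 2 (existence of the limit).} By Lemma~\ref{le:phing} we have $\varphi_{n+m}\leqslant \varphi_n+\varphi_m\circ\sigma^n+\epsilon_{n,m}$, with an error term that is $o(n+m)$ in the sense used there. Splitting a word of length $n+m$ into its first $n$ and last $m$ symbols then gives
$$
Z_{n+m}\leqslant e^{\epsilon_{n,m}}Z_nZ_m .
$$
A Fekete-type lemma with sub-linear errors, as in the asymptotically sub-additive literature, shows that $\frac1n\log Z_n$ converges; call the limit $\mathfrak{P}^q(A)$. It coincides with the topological pressure $P(\sigma,\Phi)$ of the sequence $\Phi$ in the sense of Cao--Feng--Huang, because for the shift the $n$-cylinders are $(n,\varepsilon)$-separated and spanning at a fixed scale.

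\textbf{Step 3 (variational principle and attainment).} The Feng--Huang theorem for asymptotically sub-additive sequences gives
$$
P(\sigma,\Phi)=\sup_{\nu\in\mathcal M_{\text{inv}}(\sigma)}\Big\{h(\nu)+\Phi_*(\nu)\Big\},
\qquad \Phi_*(\nu)=\lim_{n\to\infty}\frac1n\int\varphi_n\,d\nu,
$$
where the limit defining $\Phi_*(\nu)$ exists by asymptotic sub-additivity. To upgrade the supremum to a maximum I will show that $\nu\mapsto\Phi_*(\nu)$ is upper semicontinuous. Asymptotic sub-additivity writes $\Phi_*$ as an infimum, up to vanishing errors, of the continuous functionals $\nu\mapsto\frac1n\int\varphi_n\,d\nu$, and an infimum of continuous functionals is upper semicontinuous. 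The entropy $h$ is upper semicontinuous for the shift. So $h+\Phi_*$ is upper semicontinuous on the compact set $\mathcal M_{\text{inv}}(\sigma)$ and attains its maximum, which yields \eqref{eefe1}.

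I expect the main obstacle to be the uniform control in Steps 1--2. The distortion constant $K$ and the asymptotic sub-additivity error both depend on keeping $1+(1-q)S_nA$ uniformly away from zero, and the bookkeeping of $\epsilon_{n,m}$ in Lemma~\ref{le:phing} has to be compatible with the Fekete argument. If that lemma gives only the weaker form $\limsup_n \frac1n\sup|\varphi_n-\psi_n|=0$ for some sub-additive family $(\psi_n)$, I would instead run Steps 2--3 for $(\psi_n)$, where the sub-additive theory applies directly. I would then transfer both sides of \eqref{eefe1} to $(\varphi_n)$: the quantities $\frac1n\log Z_n$ and $\frac1n\int\varphi_n\,d\nu$ change only by $o(1)$, uniformly in $\nu$.
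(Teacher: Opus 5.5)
Your proposal is correct and is essentially the paper's proof: bounded distortion to pass from $\mathfrak{L}_n(1)(x_0)$ to cylinder sums, the sub-additive family $(\psi_n)$ of Lemma~\ref{le:phing}, the sub-additive variational principle, and upper semicontinuity for attainment; your single distortion estimate does the work of Lemmas~\ref{limitex} and~\ref{kk23}, your uniform $\delta$ is the hypothesis the paper tacitly uses to get \eqref{slavg1up} from bounded distortion, and your fallback through $(\psi_n)$ is the right form of Step~2, since a Fekete lemma with merely $o(n+m)$ errors does not by itself force convergence. Note also that under your standing assumption $\frac{1}{1-q}\log\delta\leqslant\varphi_n\leqslant\frac{1}{1-q}\log\big(1+(1-q)n\|A\|_\infty\big)$, so $\frac1n\|\varphi_n\|_\infty\to0$, whence $\mathfrak{P}^q(A)=\log d$ and \eqref{eefe1} collapses to $\log d=\max_{\nu}h(\nu)$, attained at the uniform Bernoulli measure, so the sub-additive machinery is not really needed in this regime.
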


The proof of Theorem~\ref{Mpressure} will be given in Section \ref{SubP}.
The limit $\mathfrak{P}^q(A)$ in ~\eqref{ratim} will be called the {\it $q$-asymptotic pressure} of the potential
$A: \Omega \to \mathbb{R}$
and the probability measures attaining the maximum will be referred to as
\emph{$q$-asymptotic equilibrium states}.

\begin{remark}\label{rmkcompeq}
The variational principle in ~\eqref{eefe1} should be compared with the variational definition of the $q$-pressure $P_q(A)$ in ~\eqref{PP234}. The exponential growth rate of the norm of the operators $(\mathfrak{L}_n)_{n\in \mathbb N}$ is related to sum of extensive entropies with the average of
a sub-additive family of potentials, whereas the $q$-pressure considers non-extensive entropies but considers the usual integral over the potential.
\end{remark}

\begin{remark}
A priori there is no relation between $q$-asymptotic equilibrium states and the (non-extensive) $q$-equilibrium states,
as these involve different measure theoretical entropies.
The above expression connects the non-extensive setting with the sub-additive setting (recall ~\eqref{T11}).
\end{remark}

\medskip

\begin{maintheorem} \label{Mddeste}  Let $0<q\leqslant 1$, let $\tilde{A}:\Omega \to \mathbb{R}$ be a normalized
Lipschitz continuous potential and let $\nu_{\tilde{A}}$ be the probability measure such that $\mathcal{L}_{\tilde A}^* \nu_{\tilde A}=\nu_{\tilde A}$.
There exists an
open neighborhood $\mathcal U\subset \mbox{Lip}(\Omega)$ of $\tilde A$ and a differentiable map
$$
\mathcal U \ni A \mapsto (\varphi_A,c_A) \in \mbox{Lip}(\Omega) \times \mathbb R
$$
such that
\begin{equation} \label{atc21}
\sum_{a=1}^d e_q^{A(a x) + \varphi_A (a x) - \varphi_A(x) - c_ A}=1\qquad \text{for every $x\in \Omega$}
\end{equation}
 and $ \int \varphi_A d \nu_{\tilde{A}}=0$.
 
 Moreover, $\lim_{A \to \tilde A} (\varphi_A,c_A) =(0,0)\in \mbox{Lip}(\Omega)\times \mathbb R$.
 
 \end{maintheorem}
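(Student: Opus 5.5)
The plan is to obtain $(\varphi_A,c_A)$ from the implicit function theorem in Banach spaces (cf. \cite{Lang,An}), starting from the trivial solution at $A=\tilde A$. I read the hypothesis that $\tilde A$ is normalized in the sense matching \eqref{atc21}, namely $\sum_{a} e_q^{\tilde A(ax)}=1$ for all $x$. Then $(\varphi,c)=(0,0)$ solves \eqref{atc21} at $A=\tilde A$. If the intended meaning were instead the classical normalization, the base point would have to be adjusted, and I would check this first.

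Let $\mathrm{Lip}_0=\{\psi\in \mathrm{Lip}(\Omega)\colon \int\psi\, d\nu_{\tilde A}=0\}$, a closed subspace. On a small neighbourhood of $(\tilde A,0,0)$ in $\mathrm{Lip}(\Omega)\times \mathrm{Lip}_0\times\mathbb R$ define
$$F(A,\varphi,c)(x)=\sum_{a=1}^d e_q^{A(ax)+\varphi(ax)-\varphi(x)-c}-1 \in \mathrm{Lip}(\Omega).$$
Since $\tilde A$ is bounded, the quantity $1+(1-q)u$ stays bounded away from $0$ for all arguments $u$ near the values of $\tilde A$. On that region $u\mapsto e_q^u$ is real analytic. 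The first step is to check that the Nemytskii map $B\mapsto e_q^{B}$ is $C^1$ (indeed $C^\infty$) from a Lipschitz neighbourhood into $\mathrm{Lip}(\Omega)$. Composition with the inverse branches $x\mapsto ax$ and with $\sigma$ preserves Lipschitz regularity, so $F$ is $C^1$. This uses only the mean value estimates on $e_q$ and its derivatives from Section~\ref{apen}.

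The key step is invertibility of the partial derivative in $(\varphi,c)$ at $(\tilde A,0,0)$. Using $\frac{d}{du}e_q^u=(e_q^u)^q$, and writing $g(y)=(e_q^{\tilde A(y)})^q$ and $w=\mathcal L_{\log g}1$, this derivative is
$$D(\psi,t)(x)=\sum_a g(ax)\big(\psi(ax)-\psi(x)-t\big)=w(x)\big(M\psi(x)-\psi(x)-t\big),$$
where $M\psi=w^{-1}\mathcal L_{\log g}\psi$. The function $w$ is Lipschitz and bounded below by a positive constant. The operator $M=\mathcal L_{\log(g/(w\circ\sigma))}$ is a Ruelle operator with a Lipschitz potential and $M1=1$. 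By Ruelle's theorem \cite{PP,Bala}, $M$ has $1$ as a simple eigenvalue with eigenspace the constants, and a spectral gap on $\mathrm{Lip}(\Omega)$. Hence $\mathrm{Lip}(\Omega)=\mathbb R\cdot 1\oplus \ker m$, where $m$ is the probability with $M^*m=m$, and $M-I$ is invertible on $\ker m$, with inverse given by the Neumann series $-\sum_n M^n$.

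Given $f\in\mathrm{Lip}(\Omega)$, solving $D(\psi,t)=f$ amounts to $(M-I)\psi=f/w+t$. This forces $t=-\int (f/w)\,dm$. Then $\psi$ is determined modulo constants, and the constant is fixed by the condition $\int\psi\, d\nu_{\tilde A}=0$, which is transversal to the constants. So $D$ is a bounded bijection $\mathrm{Lip}_0\times\mathbb R\to \mathrm{Lip}(\Omega)$, and hence an isomorphism by the open mapping theorem.

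The implicit function theorem then provides a neighbourhood $\mathcal U$ of $\tilde A$ and a $C^1$ map $A\mapsto(\varphi_A,c_A)$ with $F(A,\varphi_A,c_A)=0$ and $\varphi_A\in\mathrm{Lip}_0$. Continuity at $\tilde A$ gives $(\varphi_A,c_A)\to(0,0)$. Shrinking $\mathcal U$ keeps all exponents in the region where $e_q$ is positive, so \eqref{atc21} holds with strictly positive summands.

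I expect the main obstacle to be the spectral analysis of $D$. Unlike the case $q=1$, the weights $g$ are not normalized when $q<1$, so one must pass to $M$ and control its spectral gap uniformly. A secondary technical point is the Fréchet differentiability of $A\mapsto e_q^{A}$ in the Lipschitz norm, which I expect to be routine once arguments are kept in a compact region where $1+(1-q)u>0$.
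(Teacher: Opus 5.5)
Your proof is correct. For $q=1$ it is the paper's proof of Theorem~\ref{Mddeste0}; you impose $\int\varphi\,d\nu_{\tilde A}=0$ through the closed subspace $\mathrm{Lip}_0$ rather than as a second component of $F$, which is equivalent. Your reading of ``normalized'' as $\sum_a e_q^{\tilde A(ax)}=1$ is the one the paper uses in Theorem~\ref{ddestemu}, and it is the only one compatible with $(\varphi_A,c_A)\to(0,0)$.

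For $0<q<1$, however, your set-up differs from the paper's in a way that matters. The paper uses $F=\big(\sum_a e_q^{A(a\cdot)+\varphi(a\cdot)-\varphi-c},\ \int(\varphi-\varphi\circ\sigma-c)\,d\mu_{\tilde A}\big)$, with $\mu_{\tilde A}$ the equilibrium state of $q\tilde A_q$. By invariance the second component equals $-c$, so it forces $c=0$ and does nothing to remove the constants. Consequently the derivative $(H,h)\mapsto(\mathcal L_{q\tilde A_q}(H-H\circ\sigma-h),-h)$ kills $(\mathbf 1,0)$. In your notation its range is $\{(f,s)\colon \int (f/w)\,dm=s\}$, which has codimension one. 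So it is not the isomorphism claimed there. The appeal to Proposition~\ref{le:derat0} is also made for the unnormalized operator $\mathcal L_{q\tilde A_q}$.

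Your two devices are exactly what make the implicit function argument work. First, you keep $c$ free and fix the additive constant of $\varphi$ through $\mathrm{Lip}_0$. Second, you divide by $w=\mathcal L_{\log g}1$ to obtain the normalized operator $M$. The solvability condition that determines $t$ is then taken with respect to the $M^*$-fixed probability $m$. The side condition only needs $\nu_{\tilde A}(\mathbf 1)\neq 0$, so your argument works for any probability $\nu_{\tilde A}$.

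This robustness matters. For $q<1$ one has $e_q^u<e^u$ for $u\neq 0$, hence $\mathcal L_{\tilde A}1>1$ and $P(\tilde A)>0$. So when $\tilde A$ is $q$-normalized, no probability with $\mathcal L_{\tilde A}^*\nu=\nu$ exists.

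Two small corrections.
\begin{itemize}
\item The bound $\inf_\Omega\big(1+(1-q)\tilde A\big)>0$ does not follow from boundedness of $\tilde A$. It needs strict positivity of the summands $e_q^{\tilde A(ax)}$ together with compactness. Strict positivity is part of the paper's definition of $q$-normalized and is genuinely needed (cf.\ Example~\ref{explimeq}). The same bound is what makes $g$ and $w$ Lipschitz and keeps $w$ bounded below.
\item No uniform control of the spectral gap of $M$ is needed. The implicit function theorem only requires invertibility at the single point $(\tilde A,0,0)$, which you have already proved.
\end{itemize}
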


The previous result will be proved in Section~\ref{apen1}, stated as Theorem~\ref{Mddeste0}.
One can ask about natural condition under which for a Lipschitz continuous potential
there exist $\varphi$ and $c$ such that \eqref{atc21} holds.
 We conjecture that given $q>0$, in general there exists a positive answer to the question for potentials $A$ on some open set of potentials.  In the special case that $\Omega=\{1,2\}^\mathbb{N}$, and $A$ depends on two coordinates,  we obtain explicit solutions for a nontrivial class of  examples (see Example \ref{explimeq} in Section \ref{exe2} and Subsection \ref{1/2}).

\begin{remark}
We point out that when considering potentials which are merely continuous 
in the extensive thermodynamic formalism
then phase transitions,  slow decay of correlations and existence of non-ergodic equilibrium states may appear
(see e.g. \cite{CL1,CL3,FL,Lep1,Lo1,LopR,RW}  and references therein). In particular there exist continuous potentials $A:\Omega \to \mathbb{R}$ for which there is no eigenfunction solution for the Ruelle operator equation \eqref{atchew111} (see \cite{GKLM}).
In this paper we will always consider Lipschitz continuous potentials and postpone
the study of the non-extensive thermodynamic formalism for less regular potentials to a future work.
\end{remark}

\begin{remark}
The extensive equilibrium  state $\hat{\mu}$ for the two-sided shift and a  Lipschitz continuous potential   $\hat{A}:\{1,2,...,d\}^\mathbb{Z} \to \mathbb{R}$ 
can be analyzed by showing that it is cohomologous to a Lipschitz continuous potential $A:\{1,2,...,d\}^\mathbb{N} \to \mathbb{R}$ which depends only of positive coordinates (see Proposition 1.2 in \cite{PP} or Appendix 7.1 in \cite{Lop}).
\end{remark}

\section{$q$-entropies} \label{d-q-ent}

In this subsection we will study some basic properties of $q$-entropy functions, namely that this is a concave and upper-semicontinuous function on the probability measures. Throughout the section let $\mathcal{C}^+$ denote the space of continuous positive functions on $\{1,2,..,d\}^{\mathbb N}$. 

\subsection{Gibbs measures}
Given $0<q<1$, recall the {$q$-entropy of a probability measure $\mu \in \mathcal{G}$}   is
\begin{equation*} \label{H1q}
H_q(\mu) = \int \frac{1}{1-q} ( J(x)^{q-1}-1)      d \mu (x)=  \int \log_q (\frac{1}{J(x)}) d \mu(x) 
\end{equation*}
where $J=J_\mu$ is the Jacobian of $\mu \in \mathcal{G}$, hence it is always non-negative (see Section~\ref{apen} for the properties of the
$q$-exponential function).
Moreover, as $- \log x > \frac{1}{1- q}(x^{q-1} -1)$ for every $0<x<1$ and $0<q<1$, one has that 
\begin{equation} \label{hH} h(\mu) \geqslant H_q(\mu) \quad \text{for every $\mu\in \mathcal G$ and $0<q<1$.}
\end{equation}

\begin{lemma}
\label{rew1}
The function $\cG\ni \mu \to H_q(\mu)$ is differentiable.
\end{lemma}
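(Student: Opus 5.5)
We identify $\mathcal G$ with the set of Jacobians $\mathcal J=\{J\in \mbox{Lip}(\Omega)\colon J>0,\ \mathcal L_{\log J}(1)=1\}$, or equivalently with normalized Lipschitz potentials $B=\log J$. This is the manifold structure of \cite{LR1}: $\mathcal J$ is the zero set of the smooth map $B\mapsto \mathcal L_B(1)-1$, whose derivative $g\mapsto \mathcal L_B(g)$ is surjective, so $\mathcal J$ is a Banach submanifold of $\mbox{Lip}(\Omega)$. Differentiability of $H_q$ then means differentiability of
\[
B\mapsto H_q(\mu_B)=\frac{1}{1-q}\int \big(e^{(q-1)B}-1\big)\,d\mu_B
\]
along $\mathcal J$. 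We regard this as the restriction of a map defined on an open neighborhood of $\mathcal J$ in $\mbox{Lip}(\Omega)$. The map is the composition of the pairing $(f,\mu)\mapsto \int f\,d\mu$ with two maps:
\[
B\mapsto f_B:=\tfrac{1}{1-q}\big(e^{(q-1)B}-1\big)\qquad\text{and}\qquad B\mapsto \mu_B.
\]

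\emph{Step 1: $B\mapsto f_B$.} The Nemytskii map $B\mapsto e^{(q-1)B}$ is $C^\infty$ from $\mbox{Lip}(\Omega)$ into $\mbox{Lip}(\Omega)$, because $\exp$ is smooth and $\mbox{Lip}(\Omega)$ is a Banach algebra. Its derivative is $g\mapsto (q-1)e^{(q-1)B}g$.

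\emph{Step 2: $B\mapsto \mu_B$.} We show this map is differentiable as a map into the dual $\mbox{Lip}(\Omega)^*$, in the sense that $B\mapsto\int f\,d\mu_B$ is differentiable uniformly for $f$ in bounded subsets of $\mbox{Lip}(\Omega)$. We use Ruelle's theorem for a general Lipschitz potential $B$: $\mathcal L_B$ acting on $\mbox{Lip}(\Omega)$ has a simple isolated leading eigenvalue $\lambda_B$ with eigenfunction $h_B$, eigenmeasure $\nu_B$, and a spectral gap. By analytic perturbation theory (Kato), the map $B\mapsto \mathcal L_B$ is analytic in operator norm, since $\mathcal L_B$ depends analytically on $e^B$. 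The spectral projection
\[
\Pi_B=\frac{1}{2\pi i}\oint (z-\mathcal L_B)^{-1}\,dz
\]
is therefore analytic, and $\Pi_B f=h_B\int f\,d\nu_B$ with the normalization $\int h_B\,d\nu_B=1$. On $\mathcal J$ we have $h_B=1$, so $\mu_B=\nu_B$ and
\[
\int f\,d\mu_B=\frac{\int \Pi_B f\,d\nu_{B_0}}{\int \Pi_B 1\,d\nu_{B_0}},
\]
or, more directly, $\int f\,d\mu_B=\Pi_B(f)(x_0)$ for any fixed $x_0$. This expression is analytic in $B$ and bilinear-continuous in $f$.

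\emph{Step 3.} The chain and product rules now give differentiability. The derivative in a tangent direction $g\in T_J\mathcal G=\{g\colon \mathcal L_B(g)=0\}$ is
\[
DH_q(g)=-\int e^{(q-1)B}g\,d\mu_B+\int f_B\cdot\Big(\frac{d}{dt}\Big|_{t=0}\mu_{B+tg}\Big),
\]
where the last term can be written using $(I-\mathcal L_B)^{-1}$ on the space of functions of zero mean. For $q=1/2$ this should match Appendix A.

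The main obstacle is Step 2. The dependence of $\mu_B$ on $B$ is not differentiable in the weak$^*$ sense on $C^0$ observables. It is differentiable only when tested against Lipschitz observables, and this is precisely why the spectral gap on $\mbox{Lip}(\Omega)$ is needed. Since $f_B$ is Lipschitz, testing against Lipschitz observables suffices here.
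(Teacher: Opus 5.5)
Your proof is correct and follows the same route as the paper: regard $\mathcal G$ as the Banach manifold of Jacobians (equivalently, normalized potentials $B=\log J$) from \cite{LR1}, and apply the chain rule to the smooth map $J\mapsto \log_q(1/J)$. The paper's one-line argument only invokes differentiability of $\log_q$ and of $\mu\mapsto J_\mu$, and leaves implicit that the integrating measure $\mu$ also moves; your Step~2 supplies exactly that missing ingredient, namely analyticity of the spectral projection of $\mathcal L_B$ on $\mbox{Lip}(\Omega)$, which makes $B\mapsto \mu_B$ differentiable into $\mbox{Lip}(\Omega)^*$ and so makes the chain rule legitimate.
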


\begin{proof}
  By \cite{LR1}, the space $\mathcal{G}$ is an analytic Banach manifold. Therefore, for a fixed $q>0$,  using that $\log_q$ is differentiable on its domain
and that the function $\mu \to J_\mu$ is differentiable
(see e.g. \cite{GKLM}). Hence  we conclude that   $\cG\ni \mu \to H_q(\mu)$ is differentiable as well.
\end{proof}

We proceed to prove a variational characterization for the $q$-entropy of Gibbs measures.

\begin{lemma}\label{lemavp}
Fix an H\"older continuous normalized  potential $B=\log J:\{1,2,..,d\}^\mathbb{N}\to \mathbb{R}$ and let $\mu_{\log J}$ be the  equilibrium state with Jacobian $J$.
Then, for $0<q<1,$
\begin{equation} \label{oiu34} H_q(\mu)=\inf_{u\in \mathcal{ C}^+}\bigg\{\int  \log_q \bigg(\frac{ \sum_{a=1}^d  u(a\, x)}{u(x)}\bigg) d\mu(x) \bigg\}.
\end{equation}

\end{lemma}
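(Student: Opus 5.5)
The plan is to prove the two inequalities in \eqref{oiu34} separately, with $J=J_\mu$ the Jacobian of $\mu=\mu_{\log J}$. Throughout I will use that $\mathcal{L}_{\log J}^*\mu=\mu$, i.e. $\int f\,d\mu=\int \sum_a J(ax)f(ax)\,d\mu(x)$ for every continuous $f$.

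\emph{Upper bound.} Take $u=J\in\mathcal{C}^+$. By \eqref{JJ}, $\sum_a J(ax)=1$, so
$$\frac{\sum_a u(ax)}{u(x)}=\frac{1}{J(x)}.$$
The integral then equals $\int\log_q(1/J)\,d\mu=H_q(\mu)$, and hence the infimum is at most $H_q(\mu)$.

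\emph{Lower bound, first reduction.} Fix $u\in\mathcal{C}^+$ and write
$$\frac{\sum_a u(ax)}{u(x)}=\sum_a J(ax)\,r_a(x),\qquad r_a(x)=\frac{u(ax)}{J(ax)\,u(x)}.$$
The weights $J(ax)$ form a probability vector for each $x$. For $q>0$ the function $\log_q$ is concave, so Jensen's inequality gives
$$\log_q\Big(\frac{\sum_a u(ax)}{u(x)}\Big)\ge\sum_a J(ax)\log_q r_a(x).$$
Integrating and using the invariance identity above with $f(y)=\log_q\big(u(y)/(J(y)u(\sigma y))\big)$ yields
$$\int \log_q\Big(\frac{\sum_a u(ax)}{u(x)}\Big)d\mu\ \ge\ \int \log_q\Big(\frac{g}{J}\Big)d\mu,\qquad g=\frac{u}{u\circ\sigma}.$$
For $q=1$ this already finishes the argument: the coboundary $\log g$ has zero $\mu$-integral, so the right-hand side equals $\int\log(1/J)\,d\mu=h(\mu)$, which reproduces the classical Rokhlin-type argument.

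\emph{Lower bound, remaining step.} For $q\neq 1$ the multiplicative rule $\log_q(ab)=\log_q a+a^{1-q}\log_q b$ gives
$$\log_q(g/J)=\log_q(1/J)+J^{q-1}\log_q g.$$
It therefore remains to show that
$$\int J^{q-1}\,\log_q\Big(\frac{u}{u\circ\sigma}\Big)\,d\mu\ \ge\ 0,$$
and I expect this to be the main obstacle. The difficulty is that the weight $J^{q-1}$ destroys the cancellation of the coboundary which holds when $q=1$.

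My first attempt would avoid splitting off $g$ altogether. Instead I would minimise the functional $F(u)=\int\log_q\big(\sum_a u(ax)/u(x)\big)\,d\mu$ directly. Step one is to check that $F$ is invariant under $u\mapsto tu$ for $t>0$, so $u$ may be normalised. Step two is to compute the first variation along $u=J e^{t\psi}$ at $t=0$ and verify, using $\mathcal{L}_{\log J}^*\mu=\mu$, that it vanishes for every $\psi$; this shows that $u=J$ is a critical point. Step three is to show that $F$ is convex along the curves $t\mapsto J e^{t\psi}$ (equivalently, in the variable $\log u$), by computing the second derivative and using the concavity of $\log_q$ together with $0<q<1$. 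Convexity plus criticality would then give $F(u)\ge F(J)=H_q(\mu)$ for every $u\in\mathcal{C}^+$.

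If global convexity fails, the fallback is to refine the Jensen step by choosing weights other than $J(ax)$, adapted to $u$, so that the resulting coboundary term becomes an exact $\mu$-integral of $\psi\circ\sigma-\psi$. That term then vanishes by invariance, and the lower bound would follow as in the case $q=1$.
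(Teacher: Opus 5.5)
Your upper bound (taking $u=J$) and your Jensen reduction are both correct. The gap is the step you flag as the main obstacle, $\int J^{q-1}\log_q(u/u\circ\sigma)\,d\mu\ge 0$: it is false, and in fact the lemma itself fails for every $0<q<1$, so no completion of your plan is possible.

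Take $d\ge 2$ and a non-uniform Bernoulli measure, i.e.\ $J(x)=p_{x_1}$, which is locally constant and normalized. Test with $u(x)=w_{x_1}$, $w_i>0$. Then $\sum_a u(ax)/u(x)=W/w_{x_1}$ with $W=\sum_a w_a$, and writing $r_i=w_i/W$,
\[
\int \log_q\Big(\frac{\sum_{a} u(ax)}{u(x)}\Big)\,d\mu=\frac{1}{1-q}\Big(\sum_{i=1}^d p_i\, r_i^{\,q-1}-1\Big).
\]
For $0<q<1$ this is strictly convex in $r$ on the open simplex and blows up at its boundary. The Lagrange condition gives $r_i\propto p_i^{1/(2-q)}$; at $r=p$ it would instead require $p_i^{q-1}$ to be constant, which forces $p$ uniform. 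The minimum value is $\frac{1}{1-q}\left(\left(\sum_i p_i^{1/(2-q)}\right)^{2-q}-1\right)$, and it lies strictly below $\frac{1}{1-q}(\sum_i p_i^{q}-1)=H_q(\mu)$. For instance, $q=1/2$ and $p=(0.9,0.1)$ give about $0.459$ against $H_q(\mu)\approx 0.530$.

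The same example shows why both of your proposed fixes break down. Along $u_t=Je^{t\psi}$, the derivative of your functional at $t=0$ is $\int J^{q-1}(\mathcal{L}_{\log J}\psi-\psi)\,d\mu$. The invariance $\mathcal{L}_{\log J}^*\mu=\mu$ cannot absorb the weight $J^{q-1}$. For $\psi$ the indicator of the cylinder $\overline{i}$, the derivative equals $p_i\sum_j p_j^q-p_i^q$, which is nonzero for some $i$ unless $p$ is uniform. So $u=J$ is not even a critical point, and no choice of Jensen weights can prove a false inequality.

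The paper's proof does not close this gap either. It shows, exactly as in your $q=1$ computation combined with $\log\le\log_q$, that every $u\in\mathcal{C}^+$ gives a value at least $h(\mu)$. It then concludes with $h(\mu)\ge H_q(\mu)$ from \eqref{hH}, but that inequality is reversed: for $0<q<1$ one has $\log_q y>\log y$ for $y>1$, hence $H_q(\mu)>h(\mu)$. What the arguments actually prove is
\[
h(\mu)\le\inf_{u\in\mathcal{C}^+}\int\log_q\Big(\frac{\sum_a u(ax)}{u(x)}\Big)\,d\mu\le H_q(\mu),
\]
and the example above shows the right-hand inequality can be strict.
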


\noindent
\begin{proof}
By definition in ~\eqref{HQ} one knows that  $H_q(\mu) =  \int \log_q( J^{-1}) d \mu.$
Taking $\tilde u(x)=e^{\log J (x)} \in \mathcal C^+$, one can write the right-hand side above as 
$$ 
\log_q \bigg(\frac{ \sum_{a=1}^d \tilde u(a\, x)}{\tilde u(x)}\bigg)=
\log_q \bigg(\frac{\sum_{a=1}^d J (a\,x) }{e^{\log J (x)}}\bigg)=\log_q (J^{-1}), 
$$
and so, by integration,
\begin{eqnarray*}
\int  \log_q \bigg(\frac{\sum_{a=1}^d \tilde{u} (a\,x)}{\tilde u(x)}\bigg)d\mu(x) = H_q(\mu).
\end{eqnarray*}
Now, given a general $\overline{u}\in \mathcal{ C}^+$ it can always be written as $\overline{u}=u J,$ where $u$ is positive. We claim that 
$$
\int  \log_q \bigg(\frac{\sum_{a=1}^d \overline{u} (a\,x)}{\overline{u}(x)}\bigg)d\mu(x) \geq H_q(\mu).$$
Using that $\log(y) \leq \log_q(y)$ for each $0<q<1$, the concavity of  $x \to \log (x)$ and Jensen's inequality we deduce that
\begin{align*}
 \int  \log_q \bigg(\frac{\sum_{a=1}^d \overline{u} (a\,x)}{\overline{u}(x)}\bigg)d\mu(x) 
& =\int  \log_q \bigg(\frac{\sum_{a=1}^d u (a\,x) J(a x)}{u(x) \,J(x)}\bigg)d\mu(x) \\
& \ge
\int  \log \bigg(\sum_{a=1}^d u (a\,x)\, J(a x)\, u(x)^{-1} \,J(x)^{-1}\bigg)d\mu(x)  \\
& = \int  \log \bigg(\,\sum_{a=1}^d [\,u (a\,x)\, \, u(x)^{-1} \,J(x)^{-1}\,]\, J(a x)\bigg)d\mu(x) \\
& \ge \int   \sum_{a=1}^d     \log(\,u (a\,x)\, \, u(x)^{-1} \,J(x)^{-1}\,)\, J(a x)\,d\mu(x).
\end{align*}
This, together with the $\sigma$-invariance of $\mu$ and  \eqref{hH} yields that the right-hand side term above can be written as 
\begin{align*}
   \int        \log(\,u (x)\, \, u(\sigma(x))^{-1} \,J(\sigma( \,x))^{-1}\,)\, \,d\mu(x) 
   & = \int        \log (J(\sigma( \,x))^{-1}\,)\, \,d\mu(x) \\
   & = \int        \log (J(x)^{-1}\,)\, \,d\mu(x) 
    = h(\mu) \geqslant H_q(\mu).
\end{align*}
This proves the lemma.
 \end{proof}
 \medskip

\subsection{Invariant measures}
 In this subsection we extend the concept of $q$-entropy for arbitrary probability measures in $ \mathcal{M}_{\text{inv}}(\sigma).$
 In fact, Lemma~\ref{lemavp} suggests the following definition.

 \begin{definition} \label{uod} Given $0<q \le 1$ and a $\sigma$-invariant probability measure $\mu \in \mathcal{M}_{\text{inv}}(\sigma)$, the \emph{$q-$entropy} of $\mu$ is defined as
\begin{equation} \label{oiu54} H_q(\mu)=\inf_{u\in \mathcal{ C}^+}\bigg\{\int  \log_q \bigg(\frac{ \sum_{a=1}^d  u(a\, x)}{u(x)}\bigg) d\mu(x) \bigg\}.
\end{equation}
\end{definition}

We note that, in opposition to the case of Gibbs measures, for an arbitrary invariant measure the infimum  in  \eqref{oiu54} may not be necessarily attained by some function in $\mathcal{C}^+$.

\smallskip 
We proceed to study the concavity of the $q$-entropy map $\mu  \mapsto H_q(\mu)$. Concavity will not follows from a naive approach using Gibbs measures as, while for each
$\lambda \in (0,1)$ and $\mu_1,\mu_2\in \mathcal{G}$, the invariant probability measure
$\lambda \mu_1 + (1-\lambda)  \mu_2$ does not belong to $\mathcal{G},$ even though it can be weak$^*$  accumulated by Gibbs measures (see \cite{L3} or \cite[Corollary 7.14]{GKLM}). 
In this way we will use the variational formulation for $q$-entropy to prove that it is actually a concave function.

\begin{lemma}\label{le:Hconcave} Fix $0<q<1$. The $q$-entropy map $\mathcal{M}_{\text{inv}}(\sigma) \ni \mu \mapsto
H_q(\mu) $ is concave.
\end{lemma}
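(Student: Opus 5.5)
The approach is to use the variational formula \eqref{oiu54} in Definition~\ref{uod}: $H_q$ is an infimum of a family of affine functionals of $\mu$, and an infimum of affine maps is concave.

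\textbf{Step 1.} For each fixed $u\in\mathcal C^+$, set
$$
g_u(x)=\log_q\Big(\frac{\sum_{a=1}^d u(ax)}{u(x)}\Big).
$$
Since $u$ is continuous and strictly positive on the compact space $\Omega$, the ratio $\sum_a u(ax)/u(x)$ is continuous and bounded away from $0$ and $\infty$. The function $\log_q$ is continuous on $(0,\infty)$, so $g_u$ is a continuous, bounded function on $\Omega$. Hence $\mu\mapsto \int g_u\,d\mu$ is well defined and finite on $\mathcal M_{\text{inv}}(\sigma)$.

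\textbf{Step 2.} For each fixed $u$, the map $\mu\mapsto\int g_u\,d\mu$ is affine: for $\mu_1,\mu_2\in\mathcal M_{\text{inv}}(\sigma)$ and $\lambda\in[0,1]$ we have $\int g_u\,d(\lambda\mu_1+(1-\lambda)\mu_2)=\lambda\int g_u\,d\mu_1+(1-\lambda)\int g_u\,d\mu_2$. The convex combination is again $\sigma$-invariant, so it lies in the domain of $H_q$.

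\textbf{Step 3.} Fix $u$. By the definition of the infimum,
$$
\lambda\int g_u\,d\mu_1+(1-\lambda)\int g_u\,d\mu_2\ \ge\ \lambda H_q(\mu_1)+(1-\lambda)H_q(\mu_2).
$$
Now take the infimum over $u\in\mathcal C^+$ on the left. This gives
$$
H_q(\lambda\mu_1+(1-\lambda)\mu_2)\ \ge\ \lambda H_q(\mu_1)+(1-\lambda)H_q(\mu_2),
$$
which is concavity.

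\textbf{Main obstacle.} The only delicate point is to make sure the infimum is not $-\infty$, so that the inequalities are between real numbers. Even if it were $-\infty$, the inequality in Step~3 holds trivially with the convention $-\infty\ge-\infty$. To show finiteness, I would first try the bound $\log_q(y)\ge\log y$ for $0<q<1$ together with the telescoping argument from Lemma~\ref{lemavp}. Since $\sum_a u(ax)\ge u(bx)$ for each fixed symbol $b$, we get $\int g_u\,d\mu\ge\int\log\big(u(bx)/u(x)\big)\,d\mu$. The right-hand side does not telescope directly under $\mu$, so if that route fails I would instead note that concavity holds in the extended-real sense and defer finiteness. On Gibbs measures, finiteness follows from Lemma~\ref{lemavp}.
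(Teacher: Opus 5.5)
Your argument is correct and is the same as the paper's: $H_q$ is an infimum over $u\in\mathcal C^+$ of the affine maps $\mu\mapsto\int g_u\,d\mu$, and the paper only phrases this with an $\varepsilon$-almost minimizer $u$ for $\lambda\mu_1+(1-\lambda)\mu_2$ instead of taking the infimum directly. Your finiteness concern is resolved at once: for $0<q<1$ one has $\log_q(y)=\frac{y^{1-q}-1}{1-q}>-\frac{1}{1-q}$ for all $y>0$. Moreover, combining $u(x)\le\sum_a u(a\,\sigma(x))$ with $\log_q\ge\log$ and $\sigma$-invariance even gives $H_q\ge 0$.
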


\begin{proof} Fix $0<q<1$. 
Given $\lambda \in [0,1]$ and $\mu_1,\mu_2\in  \mathcal{M}_{\text{inv}}(\sigma)$, consider the probability $\mu_\lambda = \lambda \mu_1 + (1-\lambda )\mu_2$. 
By definition, given $\vep>0$ there exists  $u\in \mathcal{ C}^+$ such that
\begin{align*}
  H_q(\mu_\lambda)
  & \ge \int  \log_q \bigg(\frac{ \sum_{a=1}^d  u(a\, x)}{u(x)}\bigg) d \mu_\lambda \, -\vep  \\
  & = \int  \log_q \bigg(\frac{ \sum_{a=1}^d  u(a\, x)}{u(x)}\bigg) d  [\,\lambda \mu_1 + (1-\lambda)  \mu_2\,]\,- \vep
  \\
  & =
  \lambda \,\int  \log_q \bigg(\frac{ \sum_{a=1}^d  u(a\, x)}{u(x)}\bigg) d  \, \mu_1 \\
  & +  (1-\lambda) \int  \log_q \bigg(\frac{ \sum_{a=1}^d  u(a\, x)}{u(x)}\bigg) d  \mu_2\, (x)\,
  - \vep.  
\end{align*}
Now, as $q$-entropy of each $\mu_i$ is defined by an infimum over $\mathcal C^+$ of the expression in ~\eqref{oiu54} one concludes that 
$H_q(\mu_\lambda) \ge \lambda H_q(\mu_1) + (1-\lambda)  H_q(\mu_2) -\vep.$
As $\vep>0$ is arbitrary, this proves the concavity of the $q$-entropy map, as desired.
\end{proof}

\begin{remark}
 The Kolmogorov-Shannon entropy map $\mathcal{M}_{\text{inv}}(\sigma)\ni \mu\mapsto h(\mu)$ is well known to be affine.   
Simulations suggest that the $q-$entropy is not affine when $0<q<1.$
\end{remark}

\smallskip

\begin{lemma}\label{uup} Fix $0<q<1$. The $q$-entropy $H_q (\mu)$  is an upper semi-continuous  function of $\mu$ on $\mathcal{M}_{\text{inv}}(\sigma)$.
\end{lemma}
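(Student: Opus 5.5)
The plan is to use the variational formula \eqref{oiu54} from Definition~\ref{uod}. It writes $H_q$ as an infimum, over $u\in\mathcal C^+$, of the functionals
$$
\Psi_u(\mu)=\int \log_q\Big(\frac{\sum_{a=1}^d u(ax)}{u(x)}\Big)\,d\mu(x).
$$
An infimum of any family of continuous functions is upper semi-continuous. So it suffices to show that, for each fixed $u\in\mathcal C^+$, the map $\mu\mapsto\Psi_u(\mu)$ is continuous on $\mathcal{M}_{\text{inv}}(\sigma)$ in the weak$^*$ topology.

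First, fix $u\in\mathcal C^+$. Since $\Omega$ is compact and $u$ is continuous and strictly positive, there are constants $0<m\le M$ with $m\le u\le M$. The function
$$
g_u(x)=\frac{\sum_{a=1}^d u(ax)}{u(x)}
$$
is then continuous, because each map $x\mapsto ax$ is continuous (indeed an isometric contraction). It also takes values in the compact interval $[dm/M,\,dM/m]\subset(0,\infty)$.

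Next, for $0<q<1$ the function $\log_q(y)=\frac{1}{1-q}(y^{1-q}-1)$ is continuous on $(0,\infty)$. Hence $\log_q\circ g_u$ is a continuous, bounded real function on $\Omega$. By the definition of weak$^*$ convergence, $\mu_n\to\mu$ implies $\Psi_u(\mu_n)\to\Psi_u(\mu)$.

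Finally, take $\mu_n\to\mu$ in $\mathcal{M}_{\text{inv}}(\sigma)$. For every $u$ we have $H_q(\mu_n)\le \Psi_u(\mu_n)$, so
$$
\limsup_n H_q(\mu_n)\le \lim_n\Psi_u(\mu_n)=\Psi_u(\mu).
$$
Taking the infimum over $u\in\mathcal C^+$ gives $\limsup_n H_q(\mu_n)\le H_q(\mu)$, which is the claimed upper semi-continuity. Since $\mathcal{M}_{\text{inv}}(\sigma)$ is metrizable in the weak$^*$ topology, sequences suffice. Note that the value of $H_q$ is at least the infimum of a bounded-below family but could a priori be $-\infty$; the argument does not need finiteness. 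Finiteness does hold anyway, since $g_u$ is bounded away from zero and taking $u\equiv1$ gives the upper bound $H_q\le\log_q d$.

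There is no real obstacle here. The only point needing care is that $g_u$ is bounded away from $0$ and $\infty$, so that $\log_q\circ g_u$ is a genuine continuous test function; this is exactly where the strict positivity of $u\in\mathcal C^+$ and the compactness of $\Omega$ are used.
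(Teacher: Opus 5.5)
Your proof is correct and is essentially the paper's argument: both rest on $H_q$ being the infimum over $u\in\mathcal{C}^+$ of the weak$^*$-continuous functionals $\mu\mapsto\int\log_q\bigl(\sum_{a=1}^d u(ax)/u(x)\bigr)\,d\mu$. The paper argues by contradiction, while you argue directly via $\limsup$, which is slightly cleaner. Only your aside on finiteness is imprecise: a lower bound for each fixed $u$ is not uniform in $u$, and nonnegativity of $H_q$ instead follows from $\log_q\ge\log$ together with $\sum_a u(a\sigma(x))\ge u(x)$ and $\sigma$-invariance; as you say, though, the argument does not need it.
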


\begin{proof} Fix $\mu \in \mathcal{M}_{\text{inv}}(\sigma)$ and take a sequence $(\mu_n)_{n\ge 1}$ in $\mathcal{M}_{\text{inv}}(\sigma)$ converging to $\mu$ in the weak$^*$ topology.  Assume, by contradiction, there exists $\vep>0$, such that
$$ \limsup_{n \to \infty} H_q(\mu_n) > H_q(\mu) + \vep.$$
Up to extracting a subsequence of the original sequence we will assume that the limit in the left-hand side does exist. 
Then there exists $u \in \mathcal{C}^+$   and $N \ge 1 $, such that
\begin{equation} \label{tyty} H_q(\mu_n) > H_q(\mu) + \vep\geq \int  \log_q \bigg(\frac{ \sum_{a=1}^d  u(a\, x)}{u(x)}\bigg) d  \mu\, (x)
\end{equation}
for every $n \ge N$.
By weak$^*$ convergence, as $u$ is continuous and strictly positive one gets
$$ \lim_{n \to \infty} \int  \log_q \bigg(\frac{ \sum_{a=1}^d  u(a\, x)}{u(x)}\bigg) d  \mu_n \, (x)=\int  \log_q \bigg(\frac{ \sum_{a=1}^d  u(a\, x)}{u(x)}\bigg) d  \mu \, (x) $$
which, together with \eqref{tyty}, shows that
$$ H_q(\mu_n)>\int  \log_q \bigg(\frac{ \sum_{a=1}^d  u(a\, x)}{u(x)}\bigg) d  \mu_n\, (x) ,$$
which is a contradiction with \eqref{oiu54}.
\end{proof}

The next result, inspired by \cite[Theorem 9.12]{Walters} offers a dual variational principle for the $q$-entropy, showing that is coincides with the Legendre-Fenchel transform of the $q$-pressure function. More precisely, define
\begin{equation}
    \label{eq:defPq}
    \hat{P}_q(g)=  \sup\,\Big\{  H_q (\mu) + \int  g\,d \mu \,:\, \mu \in \mathcal{M}_{\text{inv}}(\sigma)\Big\}.
\end{equation}
The next result shows that the  topological pressure $\hat{P}_q$ determines the $q$-entropy $H_q$ map.

\begin{proposition} \label{kul}  Suppose  $\mu \in \mathcal{M}_{\text{inv}}(\sigma)$. Then,
$$H_q(\mu) =  \inf_{g \in C(\Omega,\mathbb{R})} \Big\{ \hat{P}_q(g) - \int g\, d\mu \Big\}.
$$
\end{proposition}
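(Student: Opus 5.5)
This follows the scheme of \cite[Theorem 9.12]{Walters}: Fenchel duality for the concave, upper semicontinuous map $\mu\mapsto H_q(\mu)$ on the compact convex set $\mathcal{M}_{\text{inv}}(\sigma)$, here with $0<q<1$. We prove the two inequalities separately.

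\emph{Step 1: the inequality $\le$.} Fix $g\in C(\Omega,\mathbb R)$. By the definition \eqref{eq:defPq}, taking $\mu$ itself as a competitor gives
$$\hat P_q(g)\ge H_q(\mu)+\int g\,d\mu .$$
Rearranging and taking the infimum over $g$ yields $H_q(\mu)\le \inf_g\{\hat P_q(g)-\int g\,d\mu\}$.

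Before the reverse inequality we record that $H_q$ is finite on $\mathcal{M}_{\text{inv}}(\sigma)$. For the upper bound, take $u\equiv 1$ in \eqref{oiu54}, which gives $H_q(\mu)\le \log_q d$. For the lower bound, the Jensen-type argument in the proof of Lemma~\ref{lemavp} shows $H_q\ge 0$. If that argument does not transfer directly to non-Gibbs measures, we only need $H_q>-\infty$; this follows from $\log_q(y)\ge \log y$ for $0<q<1$ together with $\int \log\frac{\sum_a u(ax)}{u(x)}\,d\mu\ge \int \log\frac{u(\sigma x)}{u(x)}\,d\mu = 0$ by invariance, since $\sum_a u(ax)\ge u(x)$ when $x=a'\sigma x$. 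Hence $H_q$ takes values in $[0,\log_q d]$.

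\emph{Step 2: the inequality $\ge$.} Let $b>H_q(\mu)$ be arbitrary. Inside the locally convex space $C(\Omega)^*\times\mathbb R$ with the weak$^*$ topology on the first factor, consider
$$C=\{(m,t): m\in \mathcal{M}_{\text{inv}}(\sigma),\ 0\le t\le H_q(m)\}.$$
The set $C$ is convex by the concavity in Lemma~\ref{le:Hconcave}. It is closed, and in fact compact, by the upper semicontinuity in Lemma~\ref{uup}, the compactness of $\mathcal{M}_{\text{inv}}(\sigma)$, and the boundedness from Step 1. The point $(\mu,b)$ lies outside $C$. By the Hahn--Banach separation theorem, since the dual of $C(\Omega)^*_{w^*}$ is $C(\Omega)$, there are $f\in C(\Omega)$ and $s\in\mathbb R$ with
$$\int f\,dm + s\,t<\int f\,d\mu + s\,b\qquad\text{for all }(m,t)\in C.$$
Taking $m=\mu$ and $t=H_q(\mu)$ forces $s>0$, so we may normalize $s=1$. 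Setting $g=-f$, this reads
$$H_q(m)+\int g\,dm< b+\int g\,d\mu\qquad\text{for every invariant } m.$$
Taking the supremum over $m$ gives $\hat P_q(g)-\int g\,d\mu\le b$. Letting $b\downarrow H_q(\mu)$ proves the reverse inequality.

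The main obstacle is Step 2, specifically justifying that $C$ is closed and that the separation yields a continuous function $g$ with $s\neq0$. Here the upper semicontinuity in Lemma~\ref{uup} and the uniform bounds on $H_q$ carry the load. The rest of the argument is the standard Walters argument.
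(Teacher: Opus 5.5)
Your proposal is correct and follows the paper's proof essentially step for step. The easy inequality comes straight from the definition of $\hat P_q$. The reverse inequality separates $(\mu,b)$ from $C=\{(m,t): 0\le t\le H_q(m)\}$ by Hahn--Banach, and the coefficient $s>0$ is forced by testing at $(\mu,H_q(\mu))$. Two of your steps are more careful than the paper's. Working in the weak$^*$ topology, so that the separating functional is $(m,t)\mapsto\int f\,dm+st$ with $f\in C(\Omega)$, is the right justification where the paper appeals loosely to Riesz representation. Checking that $0\le H_q\le\log_q d$ fills in what the paper uses implicitly when it writes $0\le t\le H_q(m)$.

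There are two small slips to fix. After normalizing $s=1$ you already have $H_q(m)+\int f\,dm<b+\int f\,d\mu$, so you should take $g=f$, not $g=-f$. In the nonnegativity check, the pointwise fact you need, combined with invariance, is $\sum_a u(a\,\sigma x)\ge u(x)$, not $\sum_a u(ax)\ge u(x)$.
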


\begin{proof} 
Fix $\mu_0\in\mathcal{M}_{\text{inv}}(\sigma)$ and $g  \in C(\Omega, \mathbb{R})$. By ~\eqref{eq:defPq},
 $\hat{P}_q(g) - \int g \, d\mu_0\geq H_q(\mu_0) .$ 
By arbitrariness of $g$ this proves that
\begin{equation} \label{nnf1}
\inf_{g \in C(\Omega,\mathbb{R})} \Big\{\hat{P}_q(g) - \int g \, d\mu_0\Big\}\geq H_q(\mu_0).
\end{equation}

\smallskip
We proceed to prove the converse. 
Take $b>H_q(\mu_0).$
Take
$ C = \{(\mu,t)\in \mathcal{M}_{\text{inv}}(\sigma) \times \mathbb{R} \,:\,0 \leq  \, t \leq H_q(\mu) \}. $ 
Note that for any  $\mu\in\mathcal{M}_{\text{inv}}(\sigma) $ there exists $t\ge 0$ such that $(\mu,t)\in C.$
As the entropy map $H_q$ is concave (see Lemma \ref{le:Hconcave}), we get that $C$ is a compact convex set: if $(\mu,t), (\nu,s)\in C$ and $\lambda \in [0,1]$ then 
$ 
\lambda t + (1-\lambda) s
    \le \lambda H_q(\mu) + (1-\lambda) H_q(\nu) \le H_q(\lambda \mu + (1-\lambda) \nu),
$ 
which proves that $(\lambda \mu + (1-\lambda)\nu,\lambda t + (1-\lambda)s)\in C$.
From the upper semicontinuity  of $H_q$ (recall Lemma \ref{uup}), one concludes that $(\mu_0,b) \notin \overline{C}$.
As $\mathcal M_{inv}(\sigma)\subset \mathcal M(X)\simeq C(X)^*$, by the geometric Hahn-Banach separation theorem (cf. \cite[p. 417]{DF}) there exists a linear functional $L: C(X)^*\times \mathbb R \to \mathbb R$ and $\alpha\in \mathbb R$ so that 
$ 
L(\mu,t)  \le \alpha <L(\mu_0,b) 
$ 
for every $(\mu,t) \in \overline{C}$.
By Riesz's representation theorem, there exists a continuous function
$v:\Omega  \to \mathbb{R}$  such that
$$ \int v  \; d \mu  +\alpha t < \int v  \; d \mu_0  +\alpha b,$$
for all $(\mu,t) \in \overline{C}$.
Taking $(\mu_0,H_q(\mu_0))\in \overline{C}$ in the previous expression we get that $\alpha   H_q(\mu_0)< \alpha b$, which shows that $\alpha>0.$ Therefore, 
\begin{equation*} \label{jet}  H_q(\mu)+ \frac{1}{\alpha}\int v\,  d \mu < b + \frac{1}{\alpha}\int v\, d \mu_0
\quad\text{for any $\mu\in \mathcal{M}_{\text{inv}}(\sigma)$}
\end{equation*}
 and so, taking the supremum over all invariant measures and recalling ~\eqref{eq:defPq},
$$
\hat{P}_q\Big( \frac{v}{\alpha} \Big)
\leq b +  \frac{1}{\alpha}\int v\,  d \mu_0.
$$
Thus 
$$
b \geq \hat{P}_q \Big( \frac{v}{\alpha} 
\Big) -  \frac{1}{\alpha}\int v d \mu_0
\geq \inf \Big\{ \hat{P}_q (g) - \int g d \mu_0\, :\, g \in C(\Omega, \mathbb{R}) \Big\}.
$$
Since $b>H_q(\mu_0)$ is arbitrary we conclude that 
$$  H_q(\mu_0) \geq \inf \Big\{ \hat{P}_q (g) - \int g d \mu_0\, :\, g \in C(\Omega, \mathbb{R}) \Big\}.$$
This finishes the proof of the proposition.
\end{proof}

The next result shows that, even though for each $0<q< 1$ the $q$-entropy maps lacks the symmetry of the extensive entropy map,
all of these attain its maximal value at the Bernoulli measure with equal weights. More precisely:

\begin{lemma} \label{tos} Fix $0<q<1$. The function $\cG\ni\mu\mapsto H_q(\mu)$ has a unique maximum, attained at the Bernoullli measure $\mu_0$ with equal weights $1/d$, and
 $H_q(\mu_0)
 =\log_q (d)$.
\end{lemma}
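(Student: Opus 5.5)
Since $0<q<1$, the map $t\mapsto \log_q(1/t)=\frac{1}{1-q}(t^{q-1}-1)$ is strictly convex and decreasing on $(0,1]$. The plan is to combine this convexity with the normalization $\sum_a J(ax)=1$ and the $\sigma$-invariance of $\mu$ through a pointwise Jensen inequality. Throughout, $J=J_\mu$ is the Jacobian of $\mu\in\cG$, so that $\cL_{\log J}^*\mu=\mu$.

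\emph{Step 1: rewrite $H_q(\mu)$ fibrewise.} From $\cL_{\log J}^*\mu=\mu$ we get $\int f\,d\mu=\int \sum_{a=1}^d J(ax) f(ax)\,d\mu(x)$ for every continuous $f$. Applying this with $f=\log_q(1/J)$ gives
\[
H_q(\mu)=\int \sum_{a=1}^d J(ax)\,\frac{J(ax)^{q-1}-1}{1-q}\,d\mu(x)=\int \frac{1}{1-q}\Big(\sum_{a=1}^d J(ax)^q-1\Big)d\mu(x).
\]
So for each $x$ the integrand is exactly the static Tsallis entropy \eqref{T11} of the probability vector $p(x)=(J(1x),\dots,J(dx))$.

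\emph{Step 2: bound the static entropy.} For $0<q<1$, the function $t\mapsto t^q$ is strictly concave. By Jensen, $\sum_a p_a^q\le d\,(1/d)^q=d^{1-q}$, with equality iff $p_a=1/d$ for all $a$. Hence the integrand is at most $\frac{d^{1-q}-1}{1-q}=\log_q(d)$ at every $x$, and so $H_q(\mu)\le\log_q(d)$.

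\emph{Step 3: equality and uniqueness.} For the uniform Bernoulli measure $\mu_0$ we have $J\equiv 1/d$, and directly $H_q(\mu_0)=\log_q(d)$. Conversely, suppose $H_q(\mu)=\log_q(d)$. The integrand in Step 1 is a continuous function bounded above by $\log_q(d)$, and its integral equals $\log_q(d)$. Since $\mu\in\cG$ has full support, the integrand must equal $\log_q(d)$ everywhere. By strictness of Jensen this forces $J(ax)=1/d$ for all $a$ and $x$, so $J\equiv 1/d$. Jacobians determine measures in $\cG$ bijectively, hence $\mu=\mu_0$.

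\emph{Main obstacle.} The only delicate point is the uniqueness argument. It needs full support of Gibbs measures, which is part of their definition in Section~\ref{ddyn}, and continuity of $J$ to pass from ``$\mu$-a.e.'' to ``everywhere''. I expect both to be straightforward here.
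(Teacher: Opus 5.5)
Your proof is correct and takes the paper's route: rewrite $H_q(\mu)$ via $\mathcal{L}_{\log J}^*\mu=\mu$ as the $\mu$-integral of the static Tsallis entropy of the fibre vector $(J(ax))_{1\le a\le d}$, then bound it pointwise by $\log_q(d)$; your only change there is Jensen's inequality for $t\mapsto t^q$ in place of the paper's Lagrange-multiplier computation. Your equality argument (continuity of the integrand and full support of $\mu$ force $J\equiv 1/d$, hence $\mu=\mu_0$) supplies the uniqueness step that the paper's proof does not spell out.
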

\begin{proof}
 Note first that, by Lagrange multipliers, it is not hard to check that the supremum of the function
  $p=(p_1,p_2,...,p_d) \to \Phi(p)=\sum_{j=1}^d p_j^{q}-1$ 
 is the value $d^{1-q}-1,$
 and that it is only
attained at $p_0=(1/d,1/d,...,1/d)$.
Hence, if $J$ denotes the Jacobian of a probability measure $\mu\in \mathcal G$ then, by ~\eqref{HQ}, $\sigma$-invariance of $\mu$ and $\sum_{a=1}^d J(ax)=1$, 
\begin{align*}
    H_q(\mu) 
    & = \int \frac{1}{1-q} ( J(x)^{q-1}-1)      d \mu (x)
    =
 \frac{1}{1-q}  \int \sum_{a=1}^d  J(a x)( J(a x)^{q-1}-1)      d \mu (x) \\
 & =\frac{1}{1-q}  \int    (\sum_{a=1}^d J(a x)^{q}-1)    d \mu (x).
\end{align*}
Now, as $(J(ix))_{1\le i \le d}$ is a probability vector one has that 
$\Phi((J(ix))_{1\le i \le d}) \le d^{1-q}-1$ for every $x\in \Omega$.
In consequence, 
$$H_q(\mu) =\frac{1}{1-q}  \int    (\sum_{a=1}^d J(a x)^{q}-1)    d \mu (x) 
\leqslant   \frac{1}{1-q}     (d^{1-q}-1) = H_q(\mu_0) = \log_q(d),
$$
which proves the lemma.
\end{proof}

\smallskip

\subsection{Dynamical relative $q$-entropy} \label{sec:qrel}

Let us finish this section by introducing the concept of relative $q$-entropy for Gibbs measures.
Suppose $\mu_i\in \mathcal G$ has Lipschitz continuous Jacobian $J_i$, $i=1,2$. The \emph{relative $q$-entropy} (also called
\emph{$q$-KL divergence}) is the value

\begin{equation} \label{HH2}
H_q(\mu_1,\mu_2) = \int \log_q \Big(\frac{1}{ J_2}\Big)d \mu_1 -  \int \log_q \Big(\frac{1}{J_1}\Big)d \mu_1.
\end{equation}
\color{black}
This function $H_q(\mu_1,\mu_2) $ is analytic on the pair $(\mu_1,\mu_2)$ in the Banach manifold of  Lipschitz equilibrium states
$\mathcal{G}$ (cf. \cite{LR1}). Moreover, $H_q(\mu_1,\mu_2) $ is non-negative: using \eqref{rt15} we obtain
\begin{align}
H_q(\mu_1,\mu_2) & = \int \log_q (\frac{1}{ J_2(x)})d \mu_1 (x) -  \int \log_q (\frac{1}{\tilde J(x)})d \mu_1 (x) \nonumber \\
	& =  \int \sum_{a=1}^d J_1(a x)   \Big[\log_q (\frac{1}{J_2(a x)}) -  \log_q (\frac{1}{J_1(ax)}) \Big]\,d \mu_1 (x)  \geqslant 0.
    \label{oret}
\end{align}

\section{The $q$-pressure function} \label{Dyn}

\subsection{The non-dynamical $q$-pressure function} \label{pres1}
In this subsection we consider the $q$-pressure in a setting where there is no underlying dynamical system,
 the initial setting considered in \cite{Curado1,Tsa1}.
 Althouth the results in this subsection are not strictly necessary for reading the rest of the paper, these offer a motivation and insights for the theory to be developed in the next sections.

\smallskip
Given $0<q<1$ and a continuous potential $A:\{1,2,...,d\} \to \mathbb{R}$, the $q$-pressure of 
$\beta A$, defined in ~\eqref{fipre} and involving 
a non-extensive entropy and an extensive integral, is defined as 
\begin{equation}\label{outP} 
P_q (\beta \,A) = \sup_p  \Big\{ H_q(p)  +\beta\, \,\sum_{j=1}^d p_j a_j   \Big\}
 = \sup_{p}\, \Big\{\,\, \frac{1}{1-q}  \,(\sum_{i=1}^d p_i^{q}-1)
 	+ \beta  \sum_{j=1}^d a_j \, p_j  \Big\}
\end{equation}
where the supremum
is  taken over all probability vectors $p$ on $\{1,2, \dots, d\}$,
and a probability vector $p$ is a \emph{$q$-equilibrium for $\beta\, A$} if $p$ attains the supremum above. The classical pressure of $\beta A$ is similar to the previous expression with $H_q(p)$  replaced by  $\sum_{j=1}^d - p_i\log p_j$. 
It is easy to check (using \eqref{hH21}) that $P(A) \geqslant P_q(A)$ for every  for $0<q<1$ and that $P(A) \leqslant P_q(A)$
for every $q>1$.

\begin{lemma} \label{pr1}
Given $q>0$,  $\beta \in \mathbb{R}$ and  a potential $A=(a_1,a_2,...,a_d)$,  the $q$-equilibrium state $p=(p_1,p_2,...,p_d)$ for $\beta A$ is unique
and given by
\begin{equation}\label{outtt}
p_j= \frac{e_{2-q}^{\beta a_j}}{\sum_{i=1}^d\; e_{2-q}^{\beta a_i}},
\qquad \text{for every $1\le j \le d$.}
\end{equation}
\end{lemma}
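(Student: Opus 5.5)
The plan is to treat \eqref{outP} as a finite-dimensional concave maximization over the simplex $\Delta_d=\{p\colon p_j\ge 0,\ \sum_j p_j=1\}$, in three steps: uniqueness from strict concavity, then Lagrange conditions, then solving them with $q$-exponentials.

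\emph{Step 1 (uniqueness).} Write $F(p)=\frac{1}{1-q}\big(\sum_i p_i^q-1\big)+\beta\sum_j a_jp_j$. The linear part does not affect concavity. As noted after \eqref{T11}, each map $p_j\mapsto \frac{1}{1-q}p_j^q$ has second derivative $-q\,p_j^{q-2}<0$ on $(0,1]$. Hence $F$ is strictly concave on $\Delta_d$ for every $q>0$, $q\neq 1$. The function $F$ is continuous on the compact set $\Delta_d$, so a maximizer exists, and strict concavity makes it unique.

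\emph{Step 2 (interior and Lagrange conditions).} For $0<q<1$ the partial derivative $\frac{q}{1-q}p_j^{q-1}$ blows up as $p_j\to 0^+$. So moving a little mass onto a vanishing coordinate strictly increases $F$, and the maximizer lies in the interior of $\Delta_d$. For $q>1$ the boundary must be checked separately. There I would use the standing convention, as in Theorem~\ref{mthm1}, that all $e_{2-q}$ terms are positive, and then verify directly that the candidate below is interior. At an interior maximizer there is a multiplier $\lambda$ with
\[
\frac{q}{1-q}\,p_j^{q-1}+\beta a_j=\lambda,\qquad\text{i.e.}\qquad p_j^{q-1}=\frac{1-q}{q}\,(\lambda-\beta a_j)\quad\text{for every }1\le j\le d .
\]
By concavity this condition is also sufficient. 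So it suffices to exhibit one interior $p$ together with some $\lambda$ satisfying it; that $p$ is then the unique equilibrium.

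\emph{Step 3 (matching with $e_{2-q}$).} Recall $e_{2-q}^u=(1+(q-1)u)^{1/(q-1)}$, so $(e_{2-q}^u)^{q-1}=1+(q-1)u$. The Lagrange condition therefore says that each $p_j^{q-1}$ is an affine function of $a_j$. Consequently $p_j$ is a $(2-q)$-exponential of an affine function of $\beta a_j$, with one free additive constant fixed by $\sum_j p_j=1$. This is the finite analogue of the equation $\sum_a e_{2-q}^{A(ax)+\varphi(ax)-\varphi(x)-c}=1$ in Theorem~\ref{mthm1}. To match \eqref{outtt}, I would plug $p_j=Z^{-1}e_{2-q}^{\beta a_j}$ with $Z=\sum_i e_{2-q}^{\beta a_i}$ into the Lagrange condition. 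This gives $p_j^{q-1}=Z^{1-q}\big(1+(q-1)\beta a_j\big)$, and matching coefficients of $\beta a_j$ forces $Z^{1-q}=1/q$.

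This last matching is the step I expect to be the main obstacle. The identity $Z^{1-q}=1/q$ is not automatic, because the non-additivity $e_{2-q}^{u+v}\neq e_{2-q}^u e_{2-q}^v$ prevents a multiplicative normalization from being absorbed into the exponent. Solving the stationarity condition directly gives $p_j=e_{2-q}^{\beta a_j/q-c}$, with $c$ fixed by normalization. I would therefore first reconcile this with \eqref{outtt}, either by the rescaling $\beta\mapsto\beta/q$ or by the normalization convention in force. Only then would I state the final formula, and I would check that it reduces to $e^{\beta a_j}/\sum_i e^{\beta a_i}$ as $q\to 1$.
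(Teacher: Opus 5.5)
Your Steps 1--3 are correct. The obstacle you isolate at the end is not a defect of your argument: it is fatal to the statement, because \eqref{outtt} is in general \emph{not} the maximizer of \eqref{outP}.

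The paper's proof reaches the same condition \eqref{louc1}, but then ``takes $\lambda=\frac{1}{1-q}$'' and normalizes $\tilde p$ afterwards. This treats the multiplier as free, whereas it is fixed by $\sum_j p_j=1$. The normalization also destroys stationarity: $p\mapsto p/Z$ multiplies $p_j^{q-1}$ by $Z^{1-q}$ but leaves $\beta a_j$ unchanged. So stationarity survives only under your condition $Z^{1-q}=1/q$, which fails unless $\beta=0$, all $a_j$ coincide, or by accident.

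A hand-checkable counterexample is $q=\tfrac12$, $d=2$, $\beta=1$, $A=(0,1)$. Here $F(p)=2(\sqrt{p_1}+\sqrt{p_2}-1)+p_2$ and $e_{3/2}^u=(1-u/2)^{-2}$, so \eqref{outtt} gives $p=(\tfrac15,\tfrac45)$. However, the derivative at $t=0$ of $t\mapsto F(p_1+t,p_2-t)$ is $p_1^{-1/2}-p_2^{-1/2}-1=\frac{\sqrt5}{2}-1>0$. The true maximizer solves $p_1^{-1/2}-p_2^{-1/2}=1$, which gives $p\approx(0.220,0.780)$. Similarly, in Example~\ref{firdu} the actual maximizer has $p_1\approx 0.3426$ and value $\approx 1.6905$. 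This is strictly larger than the value at the point $p_1\approx 0.317$ reported there.

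Neither of your proposed reconciliations can work. Your $q\to 1$ sanity check also cannot detect the problem, since both expressions converge to $e^{\beta a_j}/\sum_i e^{\beta a_i}$.

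Rescaling $\beta\mapsto\beta/q$ fails because the additive constant in $p_j=e_{2-q}^{\beta a_j/q-c}$ is not a multiplicative normalization. What holds instead is $e_{2-q}^{u-c}=e_{2-q}^{-c}\,e_{2-q}^{u/(1+(1-q)c)}$. So the maximizer takes the form \eqref{outtt} only with $\beta$ replaced by the self-consistent value $\beta^*=\beta/\bigl(q(1+(1-q)c)\bigr)$. Equivalently, $q\,\beta^* Z(\beta^*)^{1-q}=\beta$ with $Z(\beta^*)=\sum_i e_{2-q}^{\beta^* a_i}$.

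You should therefore stop where your computation stops and state that as the result:
\begin{itemize}
\item For $0<q<1$, the map $c\mapsto\sum_j e_{2-q}^{\beta a_j/q-c}$ decreases strictly from $+\infty$ to $0$ on its domain, so there is a unique $c$ making the sum equal to $1$.
\item Then $p_j=e_{2-q}^{\beta a_j/q-c}$ is interior and satisfies your Lagrange condition.
\item By your Step~1, this $p$ is the unique $q$-equilibrium for $\beta A$.
\end{itemize}
For $q>1$, positivity of these terms is a genuine extra hypothesis rather than a convention, since the maximizer can then lie on the boundary of the simplex. For example, at $q=2$ the problem is the Euclidean projection of $\beta A/2$ onto the simplex.
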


{
\begin{proof} In order to determine the probability that attains the maximal value of \eqref{outP} subject to the constraint $\sum_{j=1}^dp_j=1$ we use Lagrange multipliers for the function
$$((p_1,p_2, \dots, p_d),\lambda) \mapsto L((p_1,p_2, \dots, p_d),\lambda):= \beta  \sum_{j=1}^d a_j \, p_j +    \frac{1}{1-q}  \,(\sum_{j=1}^d p_j^{q}-1) - \lambda \Big[\sum_{j=1}^d  \, p_j-1\Big]$$
where $\lambda$ is a constant.
Given $1\le j \le d$ the condition $\frac{dL}{dp_j}(p,\lambda)=0$ can be written as
\begin{equation}  \label{louc1}  \beta a_j + \frac{q}{1-q} p_j^{q-1} - \lambda=0
\end{equation}
or, equivalently, 
$$  p_j = \Big(\frac{1-q}{q}\Big)^{\frac{1}{q-1}} \; \big(\, \lambda - \beta a_j\,\big)^{\frac{1}{q-1}}=
\Big(\frac{1-q}{q}
\Big)^{\frac{1}{q-1}}\; \lambda^{\frac{1}{q-1}} \; \Big(\, 1 + \frac{\beta a_j}{\lambda(1-q)}(q-1)\,\Big)^{\frac{1}{q-1}}.
$$
subject to the constraint that $\sum_{j=1}^dp_j=1$.
Taking $\lambda =\frac{1}{1-q}$ above we obtain 
$$  \tilde p_j =
\Big(\frac{1}{q}
\Big)^{\frac{1}{q-1}}\;  \Big(\, 1 + {\beta a_j}(q-1)\,\Big)^{\frac{1}{q-1}}
=
\Big(\frac{1}{q}
\Big)^{\frac{1}{q-1}}\; e_{2-q}^{\beta a_j}.
$$
Taking the normalization we conclude that 
$$
p_j=\frac{\tilde p_j}{\sum_{i=1}^d \tilde p_i}
= \frac{e_{2-q}^{\beta a_j}}{\sum_{i=1}^d\; e_{2-q}^{\beta a_i}}
$$
for each $1\le j \le d$. This proves the lemma.
\end{proof}

\begin{remark} \label{rtu}
 Given $0<q<1$ and $A=(a_1,a_2), B=(b_1,b_2)$, after a tedious computation one can show that
\begin{equation}
\frac{d}{d \beta} P_q  (A +\beta\, B)|_{\beta=0}\,\neq \int B d p_A=b_1\, \frac{ e_{2-q}^{\,a_1}}{\sum_{r=1}^2 e_{2-q}^{\, a_r}  }\,  +\, b_2  \, \frac{ e_{2-q}^{\,a_2}}{\sum_{r=1}^2 e_{2-q}^{\, a_r}  },
\end{equation}
where $p_A$ is the $q$-equilibrium state for $A=(a_1,a_2).$ In this way, in the non-extensive case,  the derivative of the pressure does not behave exactly like in the classical extensive case.
In the special case that  $n=2$, $q=1/2$ and $A=(a_1,a_2)=(3,7)$,
the graph  of the function
$$\beta \to P_q(\beta \,A)$$
(see Figure \ref{q-fig1})
indicates that  this function is neither concave, nor convex, nor monotonous in the interval $(-0.5,1.5)$. This is due to the fact that the entropy $p \to H_q(p)$ may be convex or concave depending on $q$ (see Figure \ref{fig1}) and this somehow  influence what is observed in the pressure when the value of $\beta$ changes.
\begin{figure}[htb] \label{q-fig1}
\centering
\includegraphics[width=12cm]{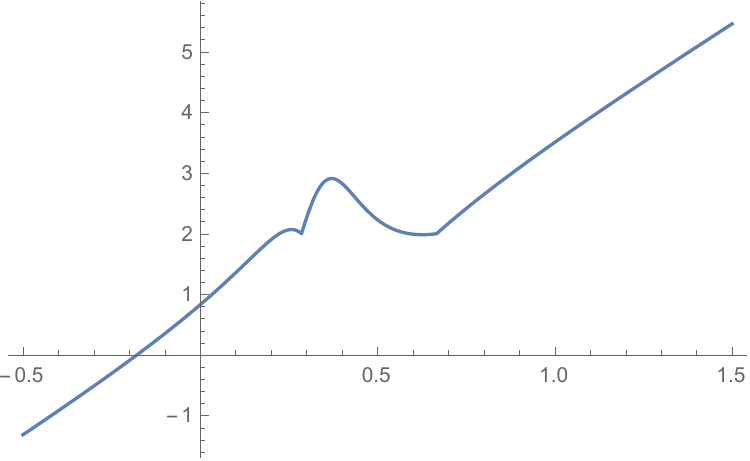}
\caption{The graph of the function $\beta \to P_q(\beta \,A)$, when $\beta\in (-0.5,1.5)$, for $q=1/2$, and $A=(a_1,a_2)=(3,7)$, obtained in Mathematica.}
\end{figure}
Similarly, taking $\beta=1$ and fixed potential $A=(a_1,a_2)=(3,7)$ the graph obtained in Mathematica for
 $q \to P_q(A)$ 
indicates that this function  is neither concave, nor convex, nor  monotonous.
\end{remark}

Finally, in what follows we illustrate how Lemma~\ref{pr1} can be used to compute the non-dynamical $q$-pressure function.

}

\begin{example}  \label{firdu}  If $q=1/3$, $n=2$, $\beta =1.2, a_1=0.5, a_2=0.8$,    $q$-pressure for $ \beta A$ is equal to $1.6895..$ and the $q$-equilibrium state $p$  is $p=(p_1,p_2)$ given by \begin{equation} \label {ryy} p_1=  0.3172...  =  \frac{ e_{2-q}^{\,\beta a_1}}{e_{2-q}^{\,\beta a_1 } + e_{2-q}^{\,\beta a_2 }  }\,\,\text{and}\,\, p_2=  0.6828...  =  \frac{ e_{2-q}^{\,\beta a_2}}{e_{2-q}^{\,\beta a_1 } + e_{2-q}^{\,\beta a_2 }  } .
\end{equation}

\end{example}

\begin{example} \label{loloi} In the case $q=1/2$ we get that the probability vector $(p_1,p_2)$ attaining the maximum in Lemma~\ref{pr1}
is given by
$$ p_1 = \frac{( a_2 b -b)^2}{8 - 4 a_1 b - 4 a_2 b + a_1^2 b^2 + a_2^2 b^2}$$
and
$$ p_2 = \frac{( a_1 b -b)^2}{8 - 4 a_1 b - 4 a_2 b + a_1^2 b^2 + a_2^2 b^2}.$$

\end{example}

\subsection{The dynamical $q$-pressure} \label{FIR}

Given $0<q<1$, and a continuous potential $A: \Omega \to \mathbb{R},$ the 
\emph{dynamical $q$-pressure of $A$ on $\mathcal G$}
(or \emph{$q$-pressure} for short, when no confusion is possible) is defined as
\begin{equation} \label{PP2}
P_q^{\mathcal{G}} (A) = \sup_{\mu \in \mathcal{G} }\, \Big\{  H_q (\mu) +\int A \,d \mu \Big\}
 =\sup_{\mu \in \mathcal{G} }\,\Big\{  \int \log_q \Big(\frac1{J}\Big) \,d \mu   + \int A \,d \mu \Big\},
\end{equation}
Any probability measure $\mu\in \mathcal{G}$ attaining the supremum  \eqref{PP2}  is called a \emph{$q$-equilibrium state} associated to $A$, for the $q$-pressure function in $\mathcal{G}$. A difficulty that arises in this context is that the set $\mathcal{G}$ is non-compact.
We define, analogously, for  a
continuous potential $A: \Omega \to \mathbb{R}$  and $0<q<1$,   the 
\emph{dynamical $q$-pressure of $A$ on
$\mathcal{M}_{\text{inv}}(\sigma)$} 
is defined by
\begin{equation} \label{PP3}
P_q^{\mathcal{M}_{\text{inv}}(\sigma)} (A) = \sup_{\mu \in \mathcal{M}_{\text{inv}}(\sigma)}\, \Big\{  H_q (\mu) +\int A \,d \mu \Big\}.
\end{equation}
Any probability measure $\mu$ attaining the supremum in  \eqref{PP3}  is referred to as a\emph{$q$-equilibrium state} associated to $A$, for the $q$-pressure on the compact set
$\mathcal{M}_{\text{inv}}(\sigma)$.

{ 
\begin{remark}
\label{rmk.existencea}
 Given $0<q\leq 1$, by boundedness and upper semi-continuity of the $q$-entropy function 
 (recall \eqref{hH} and Lemmas \ref{le:Hconcave} and \ref{uup})
 and compactness of $\mathcal{M}_{\text{inv}}(\sigma)$, there always exist  $q$-equilibrium states associated to the continuous potential $A$ on 
 $\mathcal{M}_{\text{inv}}(\sigma)$.
Moreover, even though
 $\mathcal{G}$ is a non-compact analytic manifold,
the existence of a $q$-equilibrium state for $A$ on $\mathcal G$ that attains the supremum in \eqref{PP2} follows by boundedness, upper semi-continuity and concavity of the $q$-entropy function. 
\end{remark}
}

\begin{remark}
The space $\cG$ coincides with the space of extensive equilibrium states for Lipschitz continuous potentials.
For that reason, using Rohklin formula \eqref{eq:Rohklin}, the classical pressure $P(A)$ of a Lipschitz continuous potential $A$ satisfies
$$
P(A)=\sup_{\mu \in \mathcal{G} }\,\Big\{  \int \log J \, d \mu   + \int A \,d \mu \Big\},
$$
where $J$ denotes the Jacobian associated to $\mu$.
Moreover, from \eqref{hH} we get that 
$$
P(A) \geqslant P_q(A) \qquad \text{for every $0<q<1$ and $A\in C(\Omega,\mathbb R)$}
$$
\end{remark}

\medskip 
We now proceed to study the differentiability of the pressure function. Let us first recall some necessary notions.
We say that $B: \Omega \to \mathbb{R}$ is \emph{cohomologous to $A: \Omega \to \mathbb{R}$}  if there exists  a continuous function $f: \Omega \to \mathbb{R}$, and $c\in \mathbb{R}$ such that
\begin{equation} \label{rwt} A= B + f - (f \circ \sigma) -c
\end{equation}
As $\mathcal{G}\subset \mathcal M_{inv}(\sigma)$, if ~\eqref{rwt} holds then $\int A \,d\mu =\int B\, d\mu - c $ for every $\mu\in \cG$ and, consequently, 
the $q$-equilibrium states associated with any two cohomologous potentials 
$A$ and $B$ are the same.

\smallskip
In \cite{BCMV}, Bi\'s et al  introduce an axiomatic definition of pressure function as any map $\Gamma: {\bf B} \to \mathbb R$
on a Banach space ${\bf B}\subset L^\infty(\Omega)$ that satisfies the following properties: for any $A,B \in {\bf B}$ and $c\in \mathbb R$,
\begin{itemize}
\item[(H1)] (monotonicity)
	if $A\leqslant B$ then $\Gamma(A) \leqslant \Gamma(B)$;
\item[(H2)] (translation invariance)
	$\Gamma(A+c)=\Gamma(A)+c$;
\item[(H3)] (convexity)
	$\Gamma(\alpha A+(1-\alpha)B) \leqslant \alpha\Gamma(A)+(1-\alpha)\Gamma(B)$ 
	for every $0\leqslant \alpha \leqslant 1$.
\end{itemize}
It is easy to check from its definition in ~\eqref{PP2} that $P_q: C^0(\Omega,\mathbb R) \to \mathbb R$ is a pressure function. Additionally, as the supremum in ~\eqref{PP2} is taken over probability measures that are $\sigma$-invariant then the pressure function $P_q$ also satisfies:

\begin{itemize}
\item[(H4)] (coboundary invariance)
	$P_q(A+f\circ \sigma - f) = P_q(A)$ for every $A,f \in C^0(\Omega,\mathbb R)$.
\end{itemize}
In view of \cite{BCMV} we obtain the following immediate consequence:

{ 
\begin{corollary}\label{cor:BCMV}
In the locus of convexity,
the $q$-pressure function $P_q: \mbox{Lip}(\Omega) \to \mathbb R$ is Gateaux differentiable if and only if it has a unique $q$-equilibrium state on $\mathcal M_{inv}(\sigma)$ for every Lipschitz continuous potential.
\end{corollary}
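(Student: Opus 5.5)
This is a direct application of the abstract result of Bi\'s et al.\ \cite{BCMV}, so the plan is to check that the $q$-pressure, restricted to the region where it is convex, meets the hypotheses of that result. We work with the version of the pressure taken over $\mathcal M_{inv}(\sigma)$, namely $\hat P_q$ from \eqref{eq:defPq} (equivalently $P_q^{\mathcal{M}_{\text{inv}}(\sigma)}$).

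\textbf{Step 1: verify (H1)--(H4).}
\begin{itemize}
\item Monotonicity (H1) and translation invariance (H2) follow at once from the definition as a supremum of $H_q(\mu)+\int A\,d\mu$, because $\mu$ is a probability measure.
\item Convexity (H3) holds because $\hat P_q$ is a supremum of functions that are affine in $A$. Here ``in the locus of convexity'' is read as restricting to a convex open set on which this is used. Strictly, a supremum of affine maps is globally convex, but we keep the paper's qualifier.
\item Coboundary invariance (H4) follows from $\sigma$-invariance of $\mu$, since $\int (f\circ\sigma-f)\,d\mu=0$.
\end{itemize}

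\textbf{Step 2: identify the abstract equilibrium states with $q$-equilibrium states.} In \cite{BCMV}, equilibrium states of an abstract pressure $\Gamma$ at $A$ are the probability measures $\mu$ with $\int B\,d\mu\le \Gamma(A+B)-\Gamma(A)$ for all $B$, i.e.\ the subdifferential of $\Gamma$ at $A$. Their theorem says Gateaux differentiability at $A$ is equivalent to this set being a singleton.

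We show the set equals the set of maximizers of \eqref{PP3}:
\begin{itemize}
\item If $\mu$ is a $q$-equilibrium state for $A$, then $\hat P_q(A+B)\ge H_q(\mu)+\int (A+B)\,d\mu=\hat P_q(A)+\int B\,d\mu$, so $\mu$ is a subgradient.
\item Conversely, let $\mu$ be a subgradient. Using (H4), the constraint forces $\mu$ to be $\sigma$-invariant. Then we invoke the Legendre duality of Proposition~\ref{kul}: $H_q(\mu)=\inf_g\{\hat P_q(g)-\int g\,d\mu\}$. Taking $g=A+B$ gives $\hat P_q(A+B)-\int(A+B)\,d\mu\ge \hat P_q(A)-\int A\,d\mu$ for all $B$, so the infimum is attained at $g=A$. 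This yields $H_q(\mu)+\int A\,d\mu=\hat P_q(A)$.
\end{itemize}
Existence of at least one $q$-equilibrium state comes from Remark~\ref{rmk.existencea}, which relies on upper semicontinuity (Lemma~\ref{uup}) and compactness.

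\textbf{Step 3: pass to Lipschitz potentials.} Restrict to the Banach space $\mathrm{Lip}(\Omega)$. Gateaux differentiability along Lipschitz directions still determines a subgradient uniquely, because Lipschitz functions are dense in $C^0$ and integrals against measures are determined by Lipschitz test functions. The equivalence from Step 2 therefore carries over.

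\textbf{Main obstacle.} The hard part is the reverse inclusion in Step 2: showing that every subgradient is a genuine maximizer. This needs Proposition~\ref{kul}, which in turn rests on concavity and upper semicontinuity of $H_q$ (Lemmas~\ref{le:Hconcave}, \ref{uup}). It is also exactly where the restriction to $\mathcal M_{inv}(\sigma)$ rather than the non-compact $\mathcal G$ is essential.
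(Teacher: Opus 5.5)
Your proposal is correct and takes the paper's route: the paper checks (H1)--(H4) and invokes \cite{BCMV} as an immediate consequence, and your Steps 1--3 are that argument with the implicit details written out. Two of those details are worth noting because the paper leaves them unstated. First, you work with $\hat P_q$ over $\mathcal M_{inv}(\sigma)$ and use Proposition~\ref{kul} to identify the abstract equilibrium states of \cite{BCMV} with the maximizers of \eqref{PP3}, whereas the paper verifies the axioms for the $\mathcal G$-version \eqref{PP2} but states its conclusion for $\mathcal M_{inv}(\sigma)$. Second, your density argument is what carries the equivalence from $C^0$ over to $\mbox{Lip}(\Omega)$.
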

}

\color{black}

The next proposition allows to obtain non-extensive equilibrium states from the classical extensive framework.

{

\begin{proposition}\label{esteaq1} 
    Let $\mu\in \mathcal G$ be the extensive equilibrium state for the potential $\log J$. Then, the following holds:
    \begin{enumerate}
        \item $\mu$ is a $q$-equilibrium state for the potential $-\log_q(\frac1J)$;
        \item $\mu$ is the unique $q$-equilibrium state for the potential $-\log_q(\frac1J)$ which belongs to $\mathcal G$;
        \item $P_q(-\log_q \frac1J)=0$.
    \end{enumerate}.
\end{proposition}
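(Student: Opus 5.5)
The plan is to compute the value of the functional $\nu\mapsto H_q(\nu)+\int -\log_q(\frac1J)\,d\nu$ at $\nu=\mu$, and then bound it from above on $\mathcal G$ using the nonnegativity of the relative $q$-entropy \eqref{oret}.

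\emph{Value at $\mu$.} By \eqref{HQ}, $H_q(\mu)=\int \log_q(\frac1J)\,d\mu$. Hence
$$
H_q(\mu)+\int -\log_q\Big(\frac1J\Big)\,d\mu=0,
$$
and so $P_q(-\log_q\frac1J)\ge 0$.

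\emph{Upper bound on $\mathcal G$.} Take any $\nu\in\mathcal G$ with Jacobian $J_\nu$. Then
$$
H_q(\nu)-\int \log_q\Big(\frac1J\Big)\,d\nu=\int \log_q\Big(\frac1{J_\nu}\Big)\,d\nu-\int \log_q\Big(\frac1{J}\Big)\,d\nu=-H_q(\nu,\mu)\le 0,
$$
by \eqref{HH2} and \eqref{oret}. Taking the supremum over $\nu\in\mathcal G$ gives $P_q(-\log_q\frac1J)=0$, which is item (3). Since $\mu$ attains this value, it is a $q$-equilibrium state, which is item (1).

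\emph{Uniqueness, item (2).} This is the main obstacle. One has to show that $H_q(\nu,\mu)=0$ forces $J_\nu=J$. I would write $\log_q(1/y)=\frac{y^{q-1}-1}{1-q}$ and, for each fixed $x$, consider the inner sum in \eqref{oret}:
$$
\sum_a J_\nu(ax)\big[\log_q(1/J(ax))-\log_q(1/J_\nu(ax))\big]
=\frac{1}{1-q}\Big(\sum_a J_\nu(ax)J(ax)^{q-1}-\sum_a J_\nu(ax)^q\Big).
$$
For probability vectors $p=(J_\nu(ax))_a$ and $r=(J(ax))_a$, the inequality $\sum_a p_a r_a^{q-1}\ge \sum_a p_a^q$ when $0<q<1$, with equality iff $p=r$, is a strict convexity/H\"older statement. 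Concretely, $\sum_a p_a^q=\sum_a p_a r_a^{q-1}(p_a/r_a)^{q-1}$, and one applies strict Jensen to the map $t\mapsto t^{q-1}$ with weights $p_a r_a^{q-1}$ and suitable normalisation. It is probably cleaner to use the tangent-line inequality for the strictly concave function $y\mapsto y^q$:
$$
p^q\le r^q+q r^{q-1}(p-r),
$$
summed over $a$. This gives $\sum_a p_a^q\le (1-q)\sum_a r_a^q+q\sum_a p_a r_a^{q-1}$. I would then combine it with the analogous bound coming from the definition to reach the strict inequality unless $p=r$. This pointwise nonnegativity, with equality only if $J_\nu(ax)=J(ax)$ for all $a$, is exactly what \eqref{oret} relies on, so it can be taken as given from the argument behind \eqref{rt15}.

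Since $\mu$ has full support and the inner integrand is continuous and nonnegative, $H_q(\nu,\mu)=0$ forces the integrand to vanish $\mu$-a.e. Here one must be careful: the integral in \eqref{oret} is taken with respect to $\nu$, not $\mu$. However, $\nu\in\mathcal G$ also has full support, so the integrand vanishes everywhere. Hence $J_\nu(ax)=J(ax)$ for every $x$ and every $a$, that is $J_\nu=J$. By the bijection between Jacobians and measures in $\mathcal G$, we conclude $\nu=\mu$.
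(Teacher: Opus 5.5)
Your argument is the same as the paper's proof. The functional vanishes at $\mu$, and both the upper bound and the uniqueness are taken from the nonnegativity of the relative $q$-entropy \eqref{oret}, with equality only at $\mu$.

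The problem is not only the equality case you flagged. The nonnegativity itself, which you use in the upper-bound step as if it were settled, is false. Pointwise, \eqref{oret} is exactly your inequality $\sum_a p_a r_a^{q-1}\ge \sum_a p_a^q$ with $p=(J_\nu(ax))_a$ and $r=(J(ax))_a$; this is also what \eqref{rt15} asserts. For $0<q<1$ it fails. Fix $p$. The map $r\mapsto \frac{1}{1-q}\sum_a p_a r_a^{q-1}$ is strictly convex on the simplex. Its Lagrange condition $p_a r_a^{q-2}=\mathrm{const}$ puts the unique minimizer at $r_a\propto p_a^{1/(2-q)}$, which differs from $p$ unless $p$ is uniform. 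So Gibbs' inequality ``cross-entropy $\ge$ entropy'' does not survive the passage from $\log$ to $\log_q$. For instance, with $q=\tfrac12$, $p=(0.9,0.1)$ and $r=(0.8,0.2)$,
\[
\sum_a p_a r_a^{-1/2}=\frac{0.9}{\sqrt{0.8}}+\frac{0.1}{\sqrt{0.2}}\approx 1.2298<1.2649\approx \sqrt{0.9}+\sqrt{0.1}=\sum_a p_a^{1/2}.
\]
No combination of your tangent-line bound and Jensen can close this step. Citing \eqref{rt15} ``as given'' imports a claim the paper states without proof and which does not hold. The paper's proof breaks at the same place.

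The gap cannot be repaired, because the statement itself is false. Let $J(x)=r_{x_1}$ with $r=(0.8,0.2)$, so $\mu$ is the corresponding Bernoulli measure. Let $\nu\in\mathcal G$ be the Bernoulli measure with weights $p=(0.9,0.1)$. Then
\[
H_{1/2}(\nu)+\int -\log_{1/2}\Big(\frac1J\Big)\,d\nu=2\Big(\sum_a p_a^{1/2}-\sum_a p_a r_a^{-1/2}\Big)\approx 0.070>0 .
\]
Hence $P_{1/2}(-\log_{1/2}\frac1J)>0$, while the functional vanishes at $\mu$, so items (1), (2) and (3) all fail.

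More generally, take a Bernoulli Jacobian $J(x)=r_{x_1}$ and the Bernoulli measure $\nu_w$ with weights $w$. The functional equals $F(w)=\frac{1}{1-q}\big(\sum_a w_a^q-\sum_a w_a r_a^{q-1}\big)$, with $F(r)=0$ and $\partial_{w_a}F(r)=-r_a^{q-1}$. This derivative is non-constant in $a$ unless $r$ is uniform. So for every $q\neq 1$ and every non-uniform Bernoulli $J$, $\mu$ is not even a critical point of the $q$-pressure functional on $\mathcal G$.

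The claims do hold in the degenerate case $J\equiv 1/d$: there the potential is constant and everything reduces to Lemma~\ref{tos}.
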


\begin{proof}  
Fix $\mu\in \mathcal G$ as the extensive equilibrium state for the potential $\log J$. Hence
$$
0=P(\log J)=
H_q(\mu) + \int \log J \, d\mu
=  
\int \log_q\Big(\frac1J\Big) \,d\mu
- \int \log \Big(\frac{1}{J}\Big) d \mu.
$$
Now, for each $\tilde \mu\in \mathcal{G}$, if one denotes its Jacobian by $\tilde J$, it follows from \eqref{oret} that 
\begin{equation}
    \label{eq:conseqhq}
\int \log_q \Big(\frac{1}{  J(x)}\Big)d \tilde\mu (x) -  \int \log_q \Big(\frac{1}{\tilde{J}(x)}\Big) d \tilde \mu (x) \ge 0 
\end{equation}
and, in particular, recalling ~\eqref{HQ},
\begin{align*}
P_q\Big(-\log_q \Big(\frac{1}{{J}(x)}\Big) \Big)
    & = \sup_{\tilde \mu \in \mathcal G}
\Big\{ H_q(\tilde \mu) + \int -\log_q \Big(\frac{1}{  J(x)}\Big)d \tilde\mu (x) \Big\}  \\
    & = \sup_{\tilde \mu \in \mathcal G}
\Big\{ \int \log_q \Big(\frac{1}{\tilde{J}(x)}\Big) d \tilde \mu (x) - \int \log_q \Big(\frac{1}{  J(x)}\Big)d \tilde\mu (x) \Big\}
\ge 0.
\end{align*}
Moreover, the probability measure $\tilde \mu=\mu$ is the only measure in $\mathcal G$ for which the equality in ~\eqref{eq:conseqhq} is attained. This implies simultaneously that $P_q\Big(-\log_q \Big(\frac{1}{{J}(x)}\Big) \Big)=0$ and that $\mu$ is the unique $q$-equilibrium state in $\mathcal G$ for the potential $-\log_q \Big(\frac{1}{{J}(x)}\Big)$.
This proves the proposition.
 \end{proof}

\begin{remark}
We observe that while $-\log J=\log\Big(\frac1{J}\Big)$, the $\log_q$ terms appearing in the non-extensive thermodynamic formalism obey to a symmetry in parameters. More precisely, if $A=-\log_q(\frac1J)$ then (observing the relation between $\log_q$ and $e_q$ in \eqref{prop1}) 
\begin{equation} \label{jau}\log J=- \log  ( e_q ^{-A})= \log (e^{ A}_{2-q}).
\end{equation}

\end{remark} 
}

\section{Normalized potentials and eigenfunctions of transfer operators} \label{prelim}

Given the potential $ A $, we would like to determine the value of the $q$-dynamical pressure
$ P_q (A) $ in a procedure similar to that obtained when finding the eigenfunction of the Ruelle operator (as in \cite {PP}).
There are some technical and conceptual difficulties in
trying to implement an alike strategy,  due to the fact that $e_q^{x+y} \neq e_q^x e_q^y$.
For that reason we need to introduce different  and alternative concepts which are also natural to study.

\subsection{Normalizable potentials}

\begin{definition} \label{klr12}
Given $q>0$ and a   Lipschitz continuous potential $A$, we say that $A$ is \emph{$q$-normalized} if
$$
\sum_{a=1}^d e_q^{A(a x)} =1, \quad \text{for every $x\in \Omega$}.
$$
In case there exists a Lipschitz continuous function $\varphi_{{A}}$ and a constant $c_A$ 
 such that
\begin{equation} \label{atchr27} \sum_{a=1}^d e_q^{{A}(a x) +  \varphi_{{A}} (a x) - \varphi_{{A}}(x) - c_{{A}}}=1 \quad \text{for every $x\in \Omega$}
\end{equation}
then we say that  ${A}$ is \emph{$q$-normalizable}.
\end{definition}

It is clear from the definition that the potential $A=0$ is normalizable (there exists the trivial solution $\varphi=0$ and $c= \log_q(d)$).
Moreover, in the extensive context, corresponding to $q=1$, equation~\eqref{atchr27} can be written as 
\begin{equation} \label{atchr27b}
\cL_A( e^{\varphi_A})(x)= \sum_{a=1}^d e^{{A}(a x) +  \varphi_{{A}} (a x)}=e^{-c_A} \, e^{\varphi_{{A}}(x) }\quad \text{for every $x\in \Omega$},
\end{equation}
meaning that $e^{\varphi_A}$ is an eigenfunction for the transfer operator $\cL_A$ associated to the eigenvalue $e^{-c_A}$. In particular, in the extensive framework for every Lipschitz continuous potential $A$ the transfer operator $\mathcal L_A$ has a spectral gap on the space of Lipschitz continuous observables, hence every Lipschitz continuous potential is normalizable.

\smallskip
In the non-extensive the situation changes drastically. More precisely, if $q\neq1$ then 
$$e^{x + y + (1-q) x\,y}_{q} = e^x_q \, \,e^y_q
$$
for every $x,y$ in the domain of the $q$-exponential (cf. \eqref{prop2}) and consequently the 
solutions of \eqref{atchr27} become unrelated to eigenvalues of any $\tilde q$-transfer operator. This is a major obstruction to obtain to the construction of the
non-extensive thermodynamic formalism using the tools developed in the extensive framework.

\begin{remark}
 In Examples \ref{explimeq} and \ref{jana} we illustrate the fact  that the existence of a pair $(\varphi_A,c_A)$ satisfying ~\eqref{atchr27} in the non-extensive setting can be much more subtle than its extensive counterpart. Solutions to the normalization problem will be
produced via the implicit function theorem, and this will give also a new method for the solution of eigenfunctions for transfer operators in the classical extensive framework (we refer the reader to Sections~\ref{apen0} and \ref{apen1} for more details).
This method has the advantage of showing that for each $0<q<1$, the eigenfunction $\varphi_{{A}}$ and the constant $c_{{A}}$ vary
differentially with the potential $A$.   
\end{remark}

\smallskip
The next lemma ensures that the solutions of  ~\eqref{atchr27} are related to solutions of coboundary equations required for the analysis of the $q$-pressure problem, namely that under some normalization condition, every Lipschitz continuous potential is cohomologous to a potential of the form $-\log_q(\frac1J)$.

\begin{lemma} \label{analy} Take $0<q<1$ and let $A:\Omega \to \mathbb{R}$
be a Lipschitz continuous potential for which there exists a Lipschitz continuous function $\varphi_{{A}}$ and a constant $c_A$ 
 such that
\begin{equation*}\sum_{a=1}^d e_{2-q}^{{A}(a x) +  \varphi_{{A}} (a x) - \varphi_{{A}}(x) - c_{{A}}}=1 \quad \text{for every $x\in \Omega$,}
\end{equation*}
and that all summands above are strictly positive. Then, there exist a Lipschitz continuous positive Jacobian $J:\Omega \to \mathbb{R}$,  a Lipschitz continuous function   $\varphi:\Omega \to \mathbb{R}$  and $c\in \mathbb{R}$ so that
\begin{equation} \label{oia1}
 -\, \log_q \Big(\frac{1}{J}\Big)     = A( x) + \varphi ( x) - \varphi(\sigma(x)) - c.
\end{equation}
\end{lemma}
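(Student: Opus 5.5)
The plan is to define the Jacobian directly from the normalized $(2-q)$-exponentials and then read off \eqref{oia1} from the inverse relation between $e_{2-q}$ and $\log_q$ recorded in \eqref{jau}. Set $\varphi=\varphi_A$, $c=c_A$ and
$$
J(x) = e_{2-q}^{A(x)+\varphi(x)-\varphi(\sigma(x))-c}.
$$
Evaluating at the preimage $ax$ and using $\sigma(ax)=x$ gives $J(ax)=e_{2-q}^{A(ax)+\varphi_A(ax)-\varphi_A(x)-c_A}$. The hypothesis then says exactly that $\sum_{a=1}^d J(ax)=1$ for every $x$, and every summand is strictly positive. So $J$ is a positive function satisfying \eqref{JJ}, and moreover $0<J<1$.

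Next I check that $J$ is Lipschitz. Write $B=A+\varphi-\varphi\circ\sigma-c$. This function is Lipschitz, since $A$ and $\varphi$ are and $\sigma$ is Lipschitz, and it takes values in a compact interval. On that interval, $1+(q-1)B = 1+(1-(2-q))B>0$, because each $J(ax)>0$ and every point of $\Omega$ has the form $ax$. Since $\Omega$ is compact, $1+(q-1)B$ is bounded away from zero. The map $u\mapsto (1+(q-1)u)^{1/(q-1)}$ is smooth wherever $1+(q-1)u>0$, so it is Lipschitz on the compact range of $B$. Hence $J=e_{2-q}^{B}$ is Lipschitz, and $J$ is a genuine Jacobian in the sense of \eqref{JJ}.

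It remains to compute $-\log_q(1/J)$. Here I use the identity underlying \eqref{jau}, namely $e_{2-q}^{u}=1/e_q^{-u}$. To verify it directly: $e_{2-q}^{u}=(1+(q-1)u)^{1/(q-1)}$, while $e_q^{-u}=(1-(1-q)u)^{1/(1-q)}=(1+(q-1)u)^{-1/(q-1)}$. This gives $1/J=e_q^{-B}$. Since $\log_q$ is the inverse of $e_q$ on the relevant domain, and the base $1+(q-1)B$ is positive, we get $\log_q(1/J)=-B$. This is exactly \eqref{oia1}, and it also agrees with \eqref{eeste1}.

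The only delicate point is domain bookkeeping for the $q$-exp and $q$-log functions when $0<q<1$. We need the base $1+(q-1)B$ to be positive, not merely to give a real value of the power, so that the inversion $\log_q\circ e_q=\mathrm{id}$ is legitimate. This is supplied by the strict positivity assumption on the summands, combined with compactness to obtain a uniform lower bound.
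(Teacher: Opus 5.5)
Your proposal is correct and is essentially the paper's proof: you define $J=e_{2-q}^{A+\varphi_A-\varphi_A\circ\sigma-c_A}$, note that it is a Jacobian by hypothesis, and invert using $e_{2-q}^{u}=(e_q^{-u})^{-1}$; your Lipschitz and domain checks fill in details the paper leaves implicit. One small caveat is that deducing $1+(q-1)B>0$ from positivity of the summands requires reading $e_{2-q}$ as the inverse of $\log_{2-q}$, which is defined only when the base is positive. If instead one takes it as a raw real power, as the paper does for $q=1/2$ in Appendix A with $e_{3/2}^u=(1-u/2)^{-2}$, a positive summand no longer forces a positive base, so that positivity is really an implicit hypothesis, and the paper's proof relies on it as well.
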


\begin{proof}
By assumption, the function $J(x)= e_{2-q}^{A( x) + \varphi ( x) - \varphi(\sigma(x)) - c}$ is a Lipschitz continuous Jacobian. 
Now, using that  $e_q^x =( e_{2-q} ^{-x})^{-1}$, the latter is equivalent to
$$ \frac{1}{J(x)}  =( e_{2-q}^{A( x) + \varphi ( x) - \varphi(\sigma(x)) - c})^{-1}= e_q^{-\,(A( x) + \varphi ( x) - \varphi(\sigma(x)) - c)}$$
and so ~\eqref{oia1} holds.
\end{proof}

\smallskip

\begin{remark}\label{kjl}
    The previous lemma ensures that if $A$ is $(2-q)$-normalizable with the pair $(\varphi_A,c_A)$ then $J(x)= e_{2-q}^{A( x) + \varphi_A ( x) - \varphi_A(\sigma(x)) - c_A}$ is a Jacobian. This, combined with \eqref{prop16}, ensures that
$$
e_{2-q}^{ -  \log_q (\frac{1}{ J})} = J = e_{2-q}^{A +  \varphi_{A}
 - (\varphi_{A} \circ \sigma) - c_{A}}
 $$
 and, consequently, 
$-\log_q (\frac{1}{ J})=A +  \varphi_{A}
 - (\varphi_{A} \circ \sigma) - c_{A}$.
 This proves that $-\log_q (\frac{1}{ J})$ is cohomologous to  $A - c_{A}$ and, by properties (H2) and (H4) on the pressure function,
 it follows that: (i)
$P_q (-\log_q (\frac{1}{ J}))=P_q(A) - c_{A}$,  and (ii) both potentials have the same $q$-equilibrium states.
\end{remark}

\subsection{Proof of Theorem~\ref{mthm1}} \label{tryu}

 Given a Lipschitz continuous potential  $A:\Omega \to \mathbb{R}$, denote by $\varphi$ and  $c$  the solutions
of equation \eqref{atchew1} and write  $J(x)= e_{2-q}^{A( x) + \varphi ( x) - \varphi(\sigma(x)) - c}$ for the corresponding Jacobian.
By Lemma \ref{analy} and Remark~\ref{kjl} there exists a Lipschitz continuous function $\varphi$ and $c\in \mathbb R$ so that $-\log_q\Big(\frac1J\Big) = A( x) + \varphi ( x) - \varphi(\sigma(x)) - c$ and  
$P\Big(-\log_q\Big(\frac1J\Big)\Big)=P(A)-c$
Moreover, from  Proposition~\ref{esteaq1},
the classical equilibrium state for the potential  $\, \log (J)$  is a $q$-equilibrium state for $-\log_q\Big(\frac1J\Big)$ (hence for $A$ as well).
The reasoning above shows that $c$ is unique.

\smallskip
In order to complete the proof of the theorem it remains to prove that $P_q(A)=c$. As before, we parameterize an arbitrary $\tilde \mu\in \mathcal G$ by its Jacobian $\tilde J$. By definition of $q$-pressure and invariance of probability measures in $\mathcal G$,
\begin{align*}
  P_q (A) 
  & = \sup_{{\tilde \mu} \in \mathcal{G} }\,\Big[ H_q(\tilde \mu) +  \int A \, d{\tilde \mu} (x) \Big] \\
  & = \sup_{{\tilde \mu} \in \mathcal{G} }\,\Big[ \int \log_q (\frac{1}{\tilde J(x)})d {\tilde \mu} (x) +  \int A(x) \,d {\tilde \mu} (x)  \Big] \\
  & =  
   \sup_{{\tilde \mu} \in \mathcal{G} }\,\Big[  \int -(A(x)  + \varphi ( x) - \varphi(\sigma(x)) - c\,) \,d {\tilde \mu} (x) +  \int A\, d {\tilde \mu} (x) \Big]\, = \, c.
\end{align*}
This finishes the proof of the theorem.
\hfill $\square$

\bigskip

The following is a direct consequence of the proof of Theorem~\ref{mthm1} and the classical thermodynamic formalism.

\begin{corollary} \label{mixi} 
Given $0<q<1$ and a $(2-q)$-normalizable Lipschitz continuous potential $A$, there exists a unique $q$-equilibrium state $\mu_{q,A}$ for $A$ in $\mathcal G$. Moreover, $\mu_{q,A}$
is exact, has exponential decay of correlations for Lipschitz observables and it varies differentiably with respect to $A$.
\end{corollary}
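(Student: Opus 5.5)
The plan is to reduce everything to the classical (extensive) theory through the Jacobian supplied by Lemma~\ref{analy}. Let $(\varphi_A,c_A)$ be the pair witnessing $(2-q)$-normalizability of $A$, with all summands positive. Set
$$J=e_{2-q}^{A+\varphi_A-\varphi_A\circ\sigma-c_A}.$$
By Lemma~\ref{analy} this is a positive Lipschitz Jacobian. Let $\mu_{q,A}=\mu_{\log J}\in\mathcal G$ be its classical equilibrium state, which is Ruelle's fixed point of $\mathcal L_{\log J}^*$.

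\emph{Existence.} By Theorem~\ref{mthm1}(1), $\mu_{q,A}$ is a $q$-equilibrium state for $A$ in $\mathcal G$.

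\emph{Uniqueness.} Remark~\ref{kjl} shows that $-\log_q(1/J)$ is cohomologous to $A-c_A$. Properties (H2) and (H4) then show that the two potentials have exactly the same $q$-equilibrium states in $\mathcal G$, since the functional $\mu\mapsto H_q(\mu)+\int A\,d\mu$ shifts by the constant $-c_A$ on $\mathcal G$. Proposition~\ref{esteaq1}(2) says $\mu_{\log J}$ is the unique $q$-equilibrium state in $\mathcal G$ for $-\log_q(1/J)$. The key point there is the nonnegativity of the relative $q$-entropy \eqref{oret}, with equality only when $\tilde J=J$ $\mu$-a.e. In that case $\tilde J=J$ everywhere, by full support and continuity, so $\tilde\mu=\mu$ because Gibbs measures are parametrized by Jacobians. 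Hence the $q$-equilibrium state for $A$ in $\mathcal G$ is unique. The step I would check most carefully is this strictness in \eqref{oret}. The integrand $\sum_a J_1(ax)[\log_q(1/J_2(ax))-\log_q(1/J_1(ax))]$ is a $q$-analogue of the KL divergence between the probability vectors $(J_1(ax))_a$ and $(J_2(ax))_a$. Strict concavity of $\log_q$ for $0<q<1$ gives pointwise positivity unless the two vectors coincide. Integrating over the fully supported measure $\mu_1$ then forces $J_1=J_2$.

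\emph{Ergodic properties.} Since $\log J$ is Lipschitz and normalized, the classical Ruelle–Perron–Frobenius theory applies: $\mathcal L_{\log J}$ has a spectral gap on $\mathrm{Lip}(\Omega)$ (cf.~\cite{PP,Bala}). This yields that $\mu_{\log J}$ is exact and has exponential decay of correlations for Lipschitz observables.

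\emph{Differentiable dependence.} I would combine two facts. First, $A\mapsto(\varphi_A,c_A)$ is differentiable; this is Theorem~\ref{Mddeste} (proved later via the implicit function theorem, applied with parameter $2-q$ in place of $q$). Alternatively one can apply the implicit function theorem directly to $F(A,\varphi,c)=\sum_a e_{2-q}^{A(ax)+\varphi(ax)-\varphi(x)-c}-1$ on $\mathrm{Lip}(\Omega)\times\mathrm{Lip}_0(\Omega)\times\mathbb R$. Its partial derivative in $(\varphi,c)$ is a perturbation of $I-\mathcal L$ for a normalized-type operator and is invertible by the spectral gap. Second, $e_{2-q}$ is smooth on the positive region, so $A\mapsto J$ is a differentiable map into Lipschitz Jacobians. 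Composing with the analytic parametrization $J\mapsto\mu_{\log J}$ of the Banach manifold $\mathcal G$ \cite{LR1,GKLM} gives differentiability of $A\mapsto\mu_{q,A}$.

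The main obstacle is this last step: obtaining differentiability of $(\varphi_A,c_A)$ in a neighbourhood of an arbitrary $(2-q)$-normalizable $A$, not only near normalized ones. One must also check that positivity of the summands is an open condition in the Lipschitz topology, so that the implicit-function neighbourhood consists of $(2-q)$-normalizable potentials.
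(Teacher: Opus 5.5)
\textbf{There is a genuine gap, and it is shared with the paper.} Your route is the paper's route: the corollary is obtained there as a direct consequence of Theorem~\ref{mthm1} and Proposition~\ref{esteaq1}, through the identification $\mu_{q,A}=\mu_{\log J}$ with $J=e_{2-q}^{A+\varphi_A-\varphi_A\circ\sigma-c_A}$.

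The step you singled out, nonnegativity (and strictness) of \eqref{oret}, is false for $q\neq 1$. Existence (Theorem~\ref{mthm1}(1)), uniqueness (Proposition~\ref{esteaq1}(2)), and the inherited exactness, decay and differentiability all rest on it.

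Strict concavity of $\log_q$ does not give it. Jensen yields $-\sum_a p_a\log_q(r_a/p_a)\geq 0$, but $\log_q$ is not additive: $\log_q(1/r)-\log_q(1/p)=p^{q-1}\log_q(p/r)$. With $p=\tilde J(\cdot\,x)$ and $r=J(\cdot\,x)$, the integrand of \eqref{oret} is
\[
\frac{1}{1-q}\sum_a p_a\big(r_a^{q-1}-p_a^{q-1}\big).
\]
It vanishes at $p=r$, but its gradient in $p$ there is $(r_a^{q-1})_a$. This is not proportional to $(1,\dots,1)$ unless $r$ is uniform, so moving within the simplex makes the integrand negative. The same slip sits in \eqref{rt15} and in the last display of the paper's proof of Theorem~\ref{mthm1}, where $\log_q(1/\tilde J)$ is silently replaced by $\log_q(1/J)$.

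\textbf{Counterexample.} Let $q=1/2$, $d=2$, and let $J=4/5$ on $\overline{1}$ and $J=1/5$ on $\overline{2}$. Set $A=-\log_{1/2}(1/J)$, i.e. $A=2-\sqrt{5}$ on $\overline{1}$ and $A=2-2\sqrt{5}$ on $\overline{2}$. Then $e_{3/2}^{A}=J$, so $A$ is $(3/2)$-normalized with $\varphi=0$, $c=0$ and positive summands. For the Bernoulli measure $\mu_p\in\mathcal G$ with weights $(p,1-p)$,
\[
H_{1/2}(\mu_p)+\int A\,d\mu_p=2\sqrt{p}+2\sqrt{1-p}-2\sqrt{5}+\sqrt{5}\,p .
\]
This is $0$ at $p=4/5$, i.e. at $\mu_{\log J}$, but about $0.070$ at $p=0.9$. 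Hence $P_{1/2}(A)>0=c$ and $\mu_{\log J}$ is not a $q$-equilibrium state, so your argument produces the wrong measure. In this example a unique $q$-equilibrium state in $\mathcal G$ does exist (a Bernoulli measure with $p\approx 0.907$), but for a different reason.

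\textbf{The missing idea.} For $\tilde\mu\in\mathcal G$ and continuous $B$, one has $H_q(\tilde\mu)+\int B\,d\tilde\mu=\int\Phi_x\big((\tilde J(ax))_a\big)\,d\tilde\mu(x)$, where $\Phi_x(s)=\frac{1}{1-q}\big(\sum_a s_a^q-1\big)+\sum_a s_aB(ax)$ is strictly concave on the simplex. A correct proof needs $\varphi,c$ such that, for $B=A+\varphi-\varphi\circ\sigma-c$, the value $\max_s\Phi_x(s)$ is independent of $x$ and the maximizer is Lipschitz. That normalization is not $\sum_a e_{2-q}^{B(ax)}=1$. Its non-dynamical counterpart is the error in Lemma~\ref{pr1}, where the Lagrange multiplier is set to $1/(1-q)$ before normalizing. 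So $(2-q)$-normalizability does not deliver the corollary along this route.

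\textbf{A secondary point.} Theorem~\ref{Mddeste} is stated only for $0<q\le 1$, so it cannot be applied with parameter $2-q$. Your direct implicit-function argument would be needed instead, and even then it only gives smoothness of $A\mapsto\mu_{\log J}$, which is not the $q$-equilibrium state.
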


 \subsection{The $(2-q)$-Ruelle operator equation} \label{qR}

 \smallskip

 In this subsection we will relate solutions for the $q$-Ruelle operator with  solutions for the classical Ruelle operator.
Given $0<q<1$ and a Lipchitz continuous potential $A:\Omega \to \mathbb{R}$,  if  for some $\varphi_A$ and $c_A$ we get  for any $x\in \Omega$
 \begin{equation} \label{gre1}
 \sum_{a=1}^d e_{2-q}^{A(a x) + \varphi_A (a x) - \varphi_A(x) - c_A}=1,
\end{equation}
then from Theorem~\ref{mthm1} the value $c_A$ satisfies $P^{q}(A) =c_A$,

\begin{remark}
In case there exists a continuous function $\varphi_A$ and $c_A\in \mathbb{R}$  satisfying  the previous relation we say that $q$-Ruelle theorem equation can be solved. If this is the case,  from \eqref{eeste1} a $q$-equilibrium state can be given in terms of the solutions for the Ruelle equation which are known in the classical Thermodynamic Formalism.    
\end{remark}

\smallskip

We denote by $\mathcal{L}_B$ the classical Ruelle operator for the potential
$B:\Omega \to \mathbb{R}$.
Given an $\alpha$-H\"older continuous function $F:\Omega \to \mathbb{R}$, $0<\alpha \leqslant 1$, we denote by
$$ |F|_\alpha = \sup_{ x, y \in \Omega, \, x \neq y} \frac{|F(x) - F(y)|}{d(x,y)^\alpha} 
$$
its $\alpha$-H\"older constant.
It is not hard to check that if $f$ is Lipschitz continuous then $\log_q (f)$ and $e_q^f$ are Lipschitz continuous and that 
\begin{equation*} \label{est1L} |\log_q (f)|_1 \leqslant |f|_1 \sup (|f|^{-q})
\end{equation*}
and
\begin{equation*} \label{est2L} |e^f_q|_1 \leqslant \frac{1}{1-q} \, |f|_1 \, \sup (|1+ (1-q) f|^{\frac{q}{1-q}}).
\end{equation*}
We will need the next instrumental result.

\medskip 
We consider the following space of potentials.
Given $q>0$ ($q\ne 1$), let us denote by $\mathcal{H}_q$ the space of Lipschitz continuous functions
$A:\Omega \to \mathbb{R}$ such that 
\begin{equation} \label{utr} A(x) >\frac{1}{q-1}, \quad\text{for all $x\in \Omega$.}
\end{equation}
By compactness of $\Omega$, the latter condition ~\eqref{utr} ensures that $(1-q)A$ is bounded away from $-1$, hence it makes sense to define the following.

\begin{definition}
\label{def:changeqA}
Given $q>0$ with  $q \neq 1$ and a Lipschitz continuous potential $A \in\mathcal{H}_q$ consider the potential $A_q: \Omega \to \mathbb R$ given by 
\begin{equation} 
\label{i1} A_q= \frac{1}{1 -q}   \log (  1 + (1-q) A) = \log ( e_q^{ A} ).
\end{equation}
Note that the potential $A_q$ depends in a differentiable fashion from $A$.
\end{definition}

Given a  potential $A\in \mathcal{H}_q$,
the \emph{$q$-Ruelle operator} $\mathcal{L}_{A,q} : C^0(\Omega,\mathbb R) \to C^0(\Omega,\mathbb R)$ is defined by $ f \mapsto  \mathcal{L}_{A,q} (f) $, where 
\begin{equation} \label{Ru}  \mathcal{L}_{A,q} (f) (x) = \sum_{a} e_q ^{A (a x)}\, f(a x)=\sum_{a} (1  + (1-q) \,A(a x))^{\frac{1}{ 1-q}}\, f(a x) .
\end{equation}

\color{black}

\begin{lemma}
\label{kert}
Fix $0<q<1$ and a potential $A\in \mathcal H_q$. The following properties hold:
\begin{enumerate}
    \item the operators
$\mathcal{L}_{A_q}$ and $\mathcal{L}_{A,q}$ coincide;
    \item there exists a leading eigenvalue $ \lambda_{A,q}>0$ and a unique normalized positive eigenfunction  $\varphi_{A,q}$,  in the sense that
\begin{equation}
\label{iii} \mathcal{L}_{A,q} (\varphi_{A,q})=   \lambda_{A,q}\, \varphi_{A,q};
\end{equation}
\item $P({A_q})=\log \lambda_{A,q}$ is the classical pressure for the potential $A_q,$
\item $P(A)\ge P(A_q)$.
\end{enumerate}
\end{lemma}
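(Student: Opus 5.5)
The plan is to reduce everything to the classical Ruelle--Perron--Frobenius theory applied to the modified potential $A_q$ of Definition~\ref{def:changeqA}. The only non-extensive input is the identity $e_q^{A}=e^{A_q}$, together with the elementary inequality $\log(1+t)\le t$.

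First I would check that $A_q$ is a genuine Lipschitz potential. Since $0<q<1$ and $A\in\mathcal H_q$, we have $A>\frac{1}{q-1}$, which gives $1+(1-q)A>0$ everywhere. By continuity of $A$ and compactness of $\Omega$, there is $\delta>0$ with $1+(1-q)A\ge \delta$. The map $t\mapsto \frac{1}{1-q}\log t$ is Lipschitz on $[\delta,\infty)$ with constant $\frac{1}{(1-q)\delta}$, and $1+(1-q)A$ is Lipschitz with constant $(1-q)|A|_{\text{Lip}}$. Hence $A_q$ is Lipschitz with $|A_q|_{\text{Lip}}\le |A|_{\text{Lip}}/\delta$.

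For item (1), I would use the definition \eqref{T1e}: $e_q^{A(ax)}=(1+(1-q)A(ax))^{\frac1{1-q}}=\exp\big(\frac{1}{1-q}\log(1+(1-q)A(ax))\big)=e^{A_q(ax)}$. Substituting this into \eqref{Ru} shows that $\mathcal L_{A,q}f=\sum_a e^{A_q(ax)}f(ax)=\mathcal L_{A_q}f$ for every $f\in C^0(\Omega,\mathbb R)$.

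For items (2) and (3), I would invoke Ruelle's theorem as recalled in Section~\ref{ddyn} (see \cite{PP,Bala}) for the Lipschitz potential $A_q$. It gives a simple leading eigenvalue $\lambda=e^{P(A_q)}>0$ and a positive Lipschitz eigenfunction, unique once a normalization is fixed (for instance $\int\varphi\,d\nu_{A_q}=1$). By item (1), this is exactly \eqref{iii} with $\lambda_{A,q}=e^{P(A_q)}$, that is, $P(A_q)=\log\lambda_{A,q}$.

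For item (4), I would prove the pointwise bound $A_q\le A$. Put $t=(1-q)A(x)>-1$; then $\log(1+t)\le t$, and dividing by $1-q>0$ gives $A_q(x)\le A(x)$. Monotonicity of the classical pressure then yields $P(A_q)\le P(A)$. This can be seen either from the variational principle \eqref{Pia4}, or directly because $\mathcal L_{A_q}^n1\le \mathcal L_A^n1$ and $P(\cdot)=\lim\frac1n\log\mathcal L^n_{(\cdot)}1$.

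None of these steps is a serious obstacle. The only point needing care is the uniform lower bound $1+(1-q)A\ge\delta>0$. It is what makes $A_q$ well defined and Lipschitz, so that the classical theorem applies, and it is exactly what membership in $\mathcal H_q$ together with compactness provides.
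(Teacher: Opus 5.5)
Your proof is correct and follows the paper's route: item (1) comes from the identity $e_q^{A}=e^{A_q}$, items (2)--(3) from the classical Ruelle theorem applied to $\mathcal L_{A_q}$, and item (4) from the pointwise bound $A_q\le A$ combined with the variational principle (the paper does this by inserting the equilibrium state of $A_q$ into the variational principle for $A$). Your version is slightly more careful than the paper's: you check that $A_q$ is Lipschitz, and you use $\log(1+t)\le t$ for all $t>-1$, whereas the paper states the inequality only for $x>0$ even though $A$ may take negative values.
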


\begin{proof}
    Note that, for any given continuous  function $f:\Omega \to \mathbb{R}$ and every $x\in \Omega$,
    \begin{align*}
   \mathcal{L}_{A_q} (f) (x) 
   & = \sum_{a=1}^d e^{A_q(a x)} f(a x) =  \sum_{a=1}^d e^{ \frac{1}{1 - q}   \log (  1 + (1-q) A(a\,x))} f(a x) \\
   & = \sum_{a=1}^d(1  + (1-q) A( a x))^{\frac{1}{ 1-q}}f(a x)= \mathcal{L}_{A,q} (f) (x).
   \label{i51}   
    \end{align*}
This proves item(1). Items (2) and (3) are a direct consequence of (1) together with the classical Ruelle's theorem for the transfer operator $\mathcal{L}_{A_q}$.
Finally, denoting by $\mu_{A,q}$  the classical equilibrium state for the potential $A_q$ one knows that 
  $$ P(A_q) = h( \mu_{A,q}) + \int A_q \, d  \mu_{A,q}.$$ 
In particular, the classical variational principle together with the fact that 
$$\frac{1}{1 -q}   \log (  1 + (1-q) x) <x$$
for each $0<q<1$ and $x>0$, ensures that 
\begin{align*}
    P(A) & \ge 
    h( \mu_{A,q}) + \int A \, d  \mu_{A,q} \\
    & \ge h( \mu_{A,q}) + 
    \int \frac{1}{1 -q}   \log (  1 + (1-q) A) \;  d \mu = P(A_q).
\end{align*}
This proves item (4) and completes the proof of the lemma.
\end{proof}

Some comments are in order. First, 
note that equation  \eqref{iii} is quite different from \eqref{atchew1}, which involves the $(2-q)$-exp map.
\medskip

\medskip

Note  from Lemma \ref{kert} that given a general $A$, we define $A_q$ (via \eqref{i1}), for which  there exists a continuous function $\varphi_{A,q}$ and $c_A\in \mathbb R$ so that 
\begin{equation} \label{liga1}  \sum_{a=1}^d e^{ A_q(a x) + \log \varphi_{A,q}(ax)  - \log \varphi_{A,q} (x) - \log \lambda_{A,q}}=1,\qquad
\text{for all $x\in \Omega$}.
\end{equation}
Observing that ~\eqref{i1} is equivalent to $e^{ A_q(x) }= e_q^{A(x)}$ 
one concludes that 
 \begin{equation} \label{i108} \sum_{a=1}^d e_q^{A(a x)} e^{ \log \varphi_{A,q}(a x)  - \log \varphi_{A,q} (x) - c_{A,q}} 
 = \sum_{a=1}^d e^{A_q(a x)} e^{ \log \varphi_{A,q}(a x)  - \log \varphi_{A,q} (x) - c_{A,q}}=1.
 \end{equation}

 Our strategy in the next subsections  is to explore relations of the form $e_{2-q}^x = e_q^y \, e^z$, for some $x,y,z$.
 \medskip

\subsubsection{The case $q=1/2$} \label{1/2}

In the case $q=1/2$ we are able to relate the extensive  Thermodynamic Formalism to the non-extensive  Thermodynamic Formalism. This in a sense that given a general  potential $A$ (and  the associated $A_q$ as in \eqref{i1}) we will be able to exhibit a potential $B$ such that one can express  a solution $\varphi_B$ for the $(2-q)$  equation 
\begin{equation} \label{gre1}
\sum_{a=1}^d e_{2-q}^{B(a x) + \varphi_B (a x) - \varphi_B(x) - c_B}=1.
\end{equation}
This will be achieved via  finding the eigenfunction of the  classical Ruelle operator for the potential $A_q$ (see \eqref{vol21}).

In this way one can produce examples getting a normalized equation for the non-extensive Ruelle operator for the parameter $2-q$.

In order to do that we claim that when $q=1/2$, given $r$ and $a$
$$e^r = e_q^a$$ is equivalent
to
$$ a = -2 \pm  2 e^{r/2}.$$

Below we will choose the option corresponding to $a = -2 +  2 e^{r/2}$ in our computations. We will not present
the details of all computations to derive the potential $B$; just the final expressions.

Consider the function $g$ given by
\begin{equation} \label{vol136} g (a1,a2,C, a)=  \frac{-4 + (2 +a) \, \sqrt{e^{ a1-a2-C } }\,[2 - (a1-a2-C) ]}{ (2+ a) \sqrt{e^{ a1-a2-C } }}.
\end{equation}

One can  show that
 \begin{equation} \label{vol1}e_{2-q}^{ g (a1,a2,C, a) +( a1-a2-C)}=1/ 4 (2 + a)^2\, e^{ a1 - a2 - C}= e_{q}^a \,  e^{ a1 - a2 - C}.
 \end{equation}
\smallskip

For the parameter  $(2-q)$, a potential  $B$ and $\varphi_B$  as in \eqref{gre1} will be derived   from $A$ (or, from the associated  $A_q$):
given the potential $A_q$ expressed via  \eqref{i1}, consider the  associated $\varphi_{A,q}$ and $\lambda_{A,q} = e^{c_{A,q}}$ obtained from Lemma \ref{kert}. Finally, take
 \begin{equation} \label{vol2} B=  \frac{-4 + (2 +A)  \sqrt{ \frac{\varphi_{A,q}}{ c_{A,q} (\varphi_{A,q}\circ \sigma)} }[2 -  \log (\varphi_{A,q})+\log (\varphi_{A,q}\circ \sigma)+c_{A,q})  ]}{ (2+ A)\,\sqrt{ \frac{\varphi_{A,q}}{ c_{A,q} \,\,\,(\varphi_{A,q}\circ \sigma)} } },
 \end{equation}
$\varphi_B=\log\varphi_{A,q}$ and $c_B=c_{A,q}.$
Then, given $x$ we get from \eqref{vol1}, \eqref{liga1} and \eqref{i108}
$$  \sum_{a=1}^d e_{2-q}^{A(a x) + (\varphi_A (a x) - \varphi_A(x) - c_A)}=  \sum_{a=1}^d e_q^{A(a x)} e^{ \log \varphi_{A,q}(a x)  - \log \varphi_{A,q} (x) - c_{A,q}}=$$
 \begin{equation} \label{vol21} \sum_{a=1}^d e^{A_q(a x)} e^{ \log \varphi_{A,q}(a x)  - \log \varphi_{A,q} (x) - c_{A,q}}=1.
 \end{equation}

In this way we show the existence of solutions for the $(2-q)$-Ruelle theorem equation \eqref{gre1} from classical results for the potential $A_q$.  
 Note that we can also  obtain $A$ from $A_q$ via $A= \frac{e^{(1-q) \,A_q}-1}{1-q}.$ 
 In this way we can begin our reasoning taking some $A_q$ of our choice. Explicit extensive solutions of eigenfunction and eigenvalue  for a family of potentials that depends on infinite coordinates on $\Omega$ are presented in \cite{CDLS}. Therefore, taking as $A_q$ a potential described in \cite{CDLS} one gets   solutions for the $q$-Ruelle operator equation for a family of nontrivial potentials $A$  on the non-extensive setting.

Note also that from  \eqref{vol2} we get that  the potential $A$ depends on a differentiable way from the potential $A$. Remember that the potential $A_q$ depends in a differentiable fashion $A$ and vice versa.

\medskip

\subsubsection{The general case $0<q<1$} \label{qq} The procedure is similar to the previous one but the solution is not so explicit and simple.
 Define $g$ by
 $$ g (a1,a2,C, a)= $$
 $$\frac{e^{-a1} (e^{a2 +c} (1+a-aq)^{\frac{1}{q-1}}(e^{a1-a2-c}(1+a-aq)^{\frac{1}{q-1}})^q }{q-1}  -$$
 $$ \frac{e^{a1} ( 1 + a2 + c + a1 (q-1) -a2 \,q - c q }{q-1 }.$$

 Expression \eqref{prop15} is very helpful on this section.

 One can show that
 \begin{equation} \label{vol9}e_{2-q}^{ g (a1,a2,C, a) +( a1-a2-C)}= e_{q}^a \,  e^{ a1 - a2 - C}.
 \end{equation}

 Given $A$, consider the potential $A_q$ given by \eqref{i1} and the  associated eigenfunction $\varphi_{A,q}$ and eigenvalue $\lambda_{A,q} = e^{c_{A,q}}$ obtained from Lemma \ref{kert}.

 Denote
 $$ A= g ( \log \varphi_{A,q}, \log (\varphi_{A,q} \circ \sigma),c_{A,q},A ), $$
 and $\varphi_A=\log\varphi_{A,q}$ and $c_A=c_{A,q}.$

 Then, given $x$ we get from \eqref{vol9} and \eqref{i108}, in the same way as  in  \eqref{vol21}
 \begin{equation} \label{vol213}  \sum_{a=1}^d e_{2-q}^{A(a x) + \varphi_A (a x) - \varphi_A(x) - c_A}=1.
 \end{equation}

\section{Asymptotically sub-additive $q$-potentials and asymptotic pressure} \label{SubP}

This section is devoted to the proof of Theorem~\ref{Mpressure}. We first describe the sequence of potentials appearing in the family of transfer operators $\mathfrak{L}_n$, for $n\in \mathbb N$.

\subsection{Sequences of sub-additive potentials and transfer operators}\label{SubP1}
Fix $0<q<1$ and a Lipchitz continuous potential $A: \Omega \to \mathbb{R}$.
For each $n\in \mathbb{N}$ and $x\in \Omega$ we write $S_n A(x)= \sum_{j=0}^{n-1} A(\sigma^j(x))$.
For each $n \in \mathbb{N}$, the linear operator $\mathfrak{L}_n$
defined by 
$$\mathfrak{L}_n (f) (x)= \sum_{\sigma^n(y)=x}  e_q^{ S_n A(y) } \, f(y)=
  \sum_{\sigma^n(y)=x} (1 + (1-q) \,S_n A(y))^{\frac{1}{1-q} }\, f(y)$$
\begin{equation} \label{kle2} =\sum_{\sigma^n(y)=x}\, e^{ \frac{1}{1-q} \, \log  (\, 1 + (1-q) \,S_n A(y)\,)}\, f(y).
\end{equation}
is positive and bounded.
From \eqref{kle2} it is natural to investigate the family of Lipschitz continuous functions
\begin{equation*} \varphi_{n}(y)= \frac{1}{1-q} \, \log  (\, 1 + (1-q) \,S_n A(y)\,),\,\, n \in \mathbb{N},
\end{equation*}
which we refer to as the \emph{family of $q$-potentials associated to $A$} (we omit the dependence of the sequence on $q$ and the potential
$A$ for notational simplicity).

\begin{lemma}\label{le:phin}
If the potential $A$ is non-negative then the sequence $(\varphi_n)_{n\in \mathbb N}$ is sub-additive, i.e.
$ 
\varphi_{m+n} \leqslant  \varphi_{m} \circ \sigma^n+\varphi_{n}
$ 
for every $m,n \in \mathbb N$.
\end{lemma}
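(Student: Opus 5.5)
The plan is to reduce the statement to an elementary inequality for the scalar function $g(t)=\frac{1}{1-q}\log(1+(1-q)t)$ on $[0,\infty)$, where $0<q<1$. Since $\varphi_n = g\circ S_nA$, everything rests on how $g$ acts on the cocycle relation for Birkhoff sums,
$$
S_{m+n}A(y) = S_nA(y) + S_mA(\sigma^n(y)),
$$
which holds for every $y\in\Omega$ and every $m,n\in\mathbb N$. When $A\geqslant 0$, both $s=S_nA(y)$ and $t=S_mA(\sigma^n(y))$ are non-negative. The claimed inequality $\varphi_{m+n}\le \varphi_m\circ\sigma^n+\varphi_n$ is therefore exactly
$$
g(s+t)\le g(s)+g(t)\qquad\text{for all } s,t\geqslant 0 .
$$
This is subadditivity of $g$ on the half-line.

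To prove it, note that $g$ is concave on $[0,\infty)$, since $g''(t)=-(1-q)(1+(1-q)t)^{-2}<0$, and that $g(0)=0$. A concave function vanishing at $0$ is subadditive on $[0,\infty)$. If $s+t=0$ there is nothing to prove. Otherwise, concavity together with $g(0)=0$ gives
$$
g(s)\ge \frac{s}{s+t}\,g(s+t) \quad\text{and}\quad g(t)\ge \frac{t}{s+t}\,g(s+t),
$$
and adding the two yields the claim.

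A more explicit alternative is to exponentiate. The inequality becomes $1+(1-q)(s+t)\le (1+(1-q)s)(1+(1-q)t)$, which reduces to $0\le (1-q)^2st$ and is obviously true for $s,t\ge 0$. I would present this version, since it also records that the quantities inside the logarithms are at least $1$, so that $\varphi_n$ is well defined and non-negative.

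There is no genuine obstacle here. The only point needing care is to use non-negativity of $A$ to guarantee that all Birkhoff sums are $\geqslant 0$. This is what keeps us on the region where the product inequality holds and where the logarithm is defined; for general $A$ one only expects the asymptotic version stated in Lemma~\ref{le:phing}. Finally, I would note that the composition order matches the statement: $\varphi_m\circ\sigma^n$ uses $S_mA\circ\sigma^n$, which is exactly the tail term in the cocycle relation.
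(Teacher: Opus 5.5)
Your proposal is correct, and the exponentiated version you plan to present is exactly the paper's argument: the paper reduces the claim to $\frac{1}{1-q}\log(1+(1-q)(a+b))\le \frac{1}{1-q}\log(1+(1-q)a)+\frac{1}{1-q}\log(1+(1-q)b)$ and checks it using $1+x+y\le (1+x)(1+y)$. Your write-up is slightly more complete than the paper's, because you state the cocycle identity $S_{m+n}A=S_nA+S_mA\circ\sigma^n$ explicitly and allow $s,t\geqslant 0$; the paper's restriction to $a,b>0$ does not literally cover Birkhoff sums that vanish.
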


\begin{proof}
Given $0<q<1$ and $a,b>0$ one has that
\begin{equation*} \label{kle3}
 \frac{1}{1-q} \, \log  (\, 1 + (1-q) \,(a +b) \,) \leqslant
  \frac{1}{1-q} \, \log  (\, 1 + (1-q) \,a \,)+
  \frac{1}{1-q} \, \log  (\, 1 + (1-q) \,b \,).
  \end{equation*}
The previous expression can be obtained by taking exponential on both sides of the inequality and using that $1+x+y \le (1+x)(1+y)$ for every $x,y>0$. In consequence,
  \begin{equation} \label{kle7}   \frac{(\, 1 + (1-q) \,(a +b) \,)^{ \frac{1}{1-q}}}{ (\, 1 + (1-q) \,a \,)^{\frac{1}{1-q}  }\,\,\,(\, 1 + (1-q) \,b\,)^{\frac{1}{1-q}  }} \leqslant 1.
  \end{equation}
From \eqref{kle3} we get that $\varphi_{n+m} \leqslant \varphi_{n} +  (\varphi_{m} \circ \sigma^n)$ for all $m,n\in \mathbb{N}$, as desired.
\end{proof}

\begin{lemma}\label{le:phing}
Assume that $A: \Omega \to\mathbb R$ is a continuous potential. Then the sequence $(\varphi_n)_{n\in \mathbb N}$ is asymptotically sub-additive, i.e.
there exists a sub-additive sequence $(\psi_n)_{n\in \mathbb N}$ such that
$$
\lim_{n\to\infty} \frac1n \|\varphi_{n}- \psi_n\|_\infty =0.
$$
In particular, given a $\sigma$-invariant and ergodic probability measure $\mu$,
$$
\lim_{n\to\infty} \frac1n \varphi_n(x) = \inf_{n\geqslant 1} \frac1n \int \varphi_n \, d\mu \qquad \text{for $\mu$-a.e. $x\in \Omega$.}
$$
and the map
$
\mathcal M(\sigma)\ni \mu \mapsto \lim_{n\to\infty} \frac1n \int \varphi_n \, d\mu
$
is upper-semicontinuous.
\end{lemma}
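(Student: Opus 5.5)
The plan is to compare $\varphi_n$ with the Birkhoff-type quantity $\psi_n := \frac{1}{1-q}\log\big(K + (1-q)\, S_n A^+ \big)$ for a suitable constant, or more robustly with a genuinely sub-additive sequence built from a shifted nonnegative potential. Since $A$ is continuous on the compact space $\Omega$, it is bounded, say $|A|\le M$. Set $B = A + M \ge 0$ and let $\psi_n := \frac{1}{1-q}\log\big(1+(1-q)S_nB\big)$. By Lemma~\ref{le:phin} applied to the non-negative potential $B$, the sequence $(\psi_n)$ is sub-additive. It then suffices to show that $\frac1n\|\varphi_n-\psi_n\|_\infty\to 0$. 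Here I would implicitly assume, as the paper does, that $1+(1-q)S_nA>0$ so that $\varphi_n$ is defined; otherwise one restricts to potentials with $A>0$ or works with the positive part.

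The key estimate is that both $\varphi_n$ and $\psi_n$ grow only logarithmically in $n$. Indeed $0\le S_nB\le 2Mn$, so $0\le\psi_n\le \frac{1}{1-q}\log(1+2(1-q)Mn)$. Similarly, wherever $\varphi_n$ is defined we have $1+(1-q)S_nA\le 1+(1-q)Mn$, which gives the upper bound $\varphi_n\le \frac{1}{1-q}\log(1+(1-q)Mn)=o(n)$. The lower bound is the delicate point, and I expect it to be the main obstacle: if $1+(1-q)S_nA(y)$ is close to $0$, then $\varphi_n(y)$ is very negative. I would handle this by using the standing well-definedness assumption together with Lipschitz/positivity control, namely requiring $1+(1-q)S_nA\ge \delta_n$ with $\log\delta_n=o(n)$, which holds for instance when $A\ge 0$ or $A\in\mathcal H_q$ in a suitable sense. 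Alternatively one can redefine things via truncation. With these bounds in place, $\frac1n\|\varphi_n-\psi_n\|_\infty\le \frac1n(\|\varphi_n\|_\infty+\|\psi_n\|_\infty)\to0$.

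The remaining claims follow from standard sub-additive theory. For an ergodic $\mu$, Kingman's sub-additive ergodic theorem applied to $\psi_n$ gives $\frac1n\psi_n\to\inf_n\frac1n\int\psi_n\,d\mu$ almost everywhere. Uniform closeness transfers this to $\varphi_n$, and it also shows $\lim\frac1n\int\varphi_n\,d\mu=\lim\frac1n\int\psi_n\,d\mu=\inf_n \frac1n\int\psi_n\,d\mu$. In fact both limits are $0$ here, consistent with the logarithmic growth. For upper semicontinuity, $\mu\mapsto\inf_n\frac1n\int\psi_n\,d\mu$ is an infimum of weak$^*$-continuous functions, since each $\psi_n$ is continuous, and is therefore upper semicontinuous. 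Equality of the limits of $\frac1n\int\varphi_n$ and $\frac1n\int\psi_n$, uniformly in $\mu$, transfers this property to the map in the statement.
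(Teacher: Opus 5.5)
Your argument is correct as a conditional result. If $1+(1-q)S_nA\ge\delta_n>0$ with $\frac1n\log\delta_n\to0$ (e.g.\ $A\ge0$), then $\frac1n\|\varphi_n\|_\infty$ and $\frac1n\|\psi_n\|_\infty$ both tend to $0$, and every conclusion follows with all limits equal to $0$. For $A\ge0$ you do not even need $B$, since Lemma~\ref{le:phin} makes $(\varphi_n)$ itself sub-additive. You were also right to write $\inf_n\frac1n\int\psi_n\,d\mu$ rather than $\inf_n\frac1n\int\varphi_n\,d\mu$. Take $A=g-g\circ\sigma$ with $g$ non-constant and $(1-q)(\max g-\min g)<1$: your hypothesis holds and $\varphi_n$ is bounded. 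Yet Jensen gives $\int\varphi_1\,d\mu<0$ for every fully supported invariant $\mu$. So the displayed formula with $\varphi_n$ inside the infimum is false as written.

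For the lemma as stated (arbitrary continuous $A$), however, there is a genuine gap exactly where you located it, and your patch does not cover it. Membership in $\mathcal H_q$ only gives $1+(1-q)A>0$, not $1+(1-q)S_nA>0$. For $q=\frac12$ and $A\equiv-1\in\mathcal H_q$ one has $1+(1-q)S_nA=1-\frac n2\le0$ for $n\ge2$. Truncation, the other option you mention, changes the sequence $(\varphi_n)$ the lemma is about.

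The paper's proof uses the same device as yours: $\psi_n$ is built from $A+c$ with $c=-2\inf A$ and is sub-additive by Lemma~\ref{le:phin}. It then asserts $1\le\frac{1+(1-q)S_n(A+c)}{1+(1-q)S_nA}\le4$, which would make $\|\varphi_n-\psi_n\|_\infty$ bounded. That bound is unjustified. Its middle step tacitly uses $S_nA\ge n\inf|A|$, and when $\inf A<0$ the denominator need not stay away from $0$; it can become arbitrarily small or negative. So the paper does not close your gap either.

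In fact the gap cannot be closed, even for Lipschitz $A$ with all $\varphi_n$ well defined. On $\{1,2\}^{\mathbb N}$ let $z=(2,2,\dots)$, $w=(1,2,2,\dots)$, $g=\frac{1}{2(1-q)}\big(\mathrm{dist}(\cdot,z)-\mathrm{dist}(\cdot,w)\big)$ and $A=g-g\circ\sigma$.

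\begin{itemize}
\item $z$ and $w$ are the unique minimum and maximum of $g$, and $(1-q)(\max g-\min g)=1$.
\item Hence $1+(1-q)S_nA(y)=(1-q)\big[(g(y)-\min g)+(\max g-g(\sigma^ny))\big]>0$ for all $n,y$, because $\sigma^nz=z\ne w$.
\item Let $y_n=(2,\dots,2,1,2,2,\dots)$ with $n$ leading $2$'s. Then $\varphi_n(y_n)=\frac1{1-q}\log\big(\frac12\mathrm{dist}(y_n,z)\big)\le-\kappa n$ for some $\kappa>0$, since $\mathrm{dist}(y_n,z)$ decays exponentially.
\item On the other hand $\varphi_n(w)=\frac{\log2}{1-q}$ and $\varphi_{2n}(y_n)\ge0$.
\end{itemize}

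Suppose $\psi$ were sub-additive with $\|\psi_k-\varphi_k\|_\infty=o(k)$. Then $-o(n)\le\psi_{2n}(y_n)\le\psi_n(y_n)+\psi_n(w)\le-\kappa n+o(n)$, a contradiction. Hence a hypothesis like your $\frac1n\log\delta_n\to0$ is genuinely needed, and your conditional version is a correct form of the lemma.
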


\medskip

\begin{proof}
In the case that $\inf_{x\in \Omega} A(x)\geqslant 0$, Lemma~\ref{le:phin} ensures that $(\varphi_n)_{n\in \mathbb N}$ is sub-additive and there is nothing to prove. Assume now that $\inf_{x\in \Omega} A(x)<0$ and choose the constant $c=-2\inf_{x\in \Omega} A(x)>0$. Then one can write
\begin{align}
\frac{1}{1-q} & \, \log  (\, 1 + (1-q) \,S_n A)(y)\,) \nonumber \\
		& = \frac{1}{1-q} \, \log  (\, 1 + (1-q) \,S_n (A+c)(y)\,)  \label{asymp1a} \\
		& +
		\frac{1}{1-q} \,  \log  \Big[ \frac{\, 1 + (1-q) \,S_n A)(y)\,}{\, 1 + (1-q) \,S_n (A+c)(y)\,} \Big].  \label{asymp2a}
\end{align}
The sequence $\psi_n(x)=\frac{1}{1-q} \, \log  (\, 1 + (1-q) \,S_n (A+c)(y)\,)$ appearing in \eqref{asymp1a} is sub-additive, by Lemma~\ref{le:phin}. Moreover, since $0<q<1$ and  
$\log y \le y$ for every $y\ge 1$
one can use 
$$
1\leqslant \frac{\, 1 + (1-q) \,S_n (A+c)(y)\,}{\, 1 + (1-q) \,S_n A(y)\,} \leqslant 1+ \frac{(1-q)cn}{1+(1-q) n \inf_{x\in \Omega} |A(x)|} \leqslant 4
$$
for every large $n\geqslant 1$,
to bound ~\eqref{asymp2a} in the following way
\begin{align*}
0\le \frac{1}{(1-q)} \,
	   \log  \Big( \frac{\, 1 + (1-q) \,S_n (A+c)(y)\,}{\, 1 + (1-q) \,S_n A)(y)\,} \Big) 
	 \leqslant \frac{4}{(1-q)}.
\end{align*}
This implies that $\frac1n\|\varphi_-\psi_n\|_\infty \le \frac{4}{(1-q)n} $ 
tends to zero as $n\to\infty$, which  proves the first assertion in the lemma.
The latter, together with Kingman's sub-additive ergodic theorem (see \cite[Theorem 10.1]{Walters})
implies that, for every $\sigma$-invariant and ergodic probability $\mu$ on $\Omega$,
\begin{equation} \label{vau2}\lim_{n \to \infty} \frac{1}{n}\,\psi_n(x) =\inf_{n\geqslant 1} \frac1n \int \psi_n \, d\mu, \quad \text{for $\mu$-a.e. $x\in \Omega$.}
\end{equation}
Furthermore, as the map $\mathcal M(\sigma)\ni \mu \mapsto \inf_{n\geqslant 1} \frac1n \int \psi_n \, d\mu$ is the infimum of continuous maps, then it is upper-semicontinuous.
Finally, the second and third claims in the lemma are direct consequences of the corresponding statements for the sub-additive sequence $(\psi_n)_{n\in \mathbb N}$ and the fact that $\lim_{n\to\infty} \frac1n \|\varphi_{n}- \psi_n\|_\infty =0.$
\end{proof}

\subsection{Proof of Theorem~\ref{Mpressure} }\label{SubP2}

Consider a Lipschitz continuous potential  $A:\Omega \to \mathbb{R}  $ and  $0<q<1$.
We will first show  that for any given $x_0 \in \Omega$, the limit
$$ \lim_{n \to \infty} \frac{1}{n} \log  \mathfrak{L}_n (1) (x_0)$$
exists and it is independent of $x_0$.

\begin{lemma}\label{limitex}
For each $x_0$ in $\Omega$, the sequence
$
(\frac1n \log \mathfrak{L}_n (1) (x_0))_{n\geqslant 1}
$ is convergent.
\end{lemma}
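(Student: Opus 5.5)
The plan is to reduce the statement to Fekete's lemma for a sub-additive sequence of numbers. Two ingredients make this work: the asymptotic sub-additivity of Lemma~\ref{le:phing}, and a uniform bounded distortion estimate for the weights $e^{\psi_n}$. As in Lemma~\ref{le:phing}, I will assume that $1+(1-q)S_nA>0$ wherever the potentials $\varphi_n$ are used, so that they are well defined.

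\emph{Step 1 (replace $\varphi_n$ by $\psi_n$).} Let $c=\max\{0,-2\inf A\}$ and $\psi_n=\frac{1}{1-q}\log(1+(1-q)S_n(A+c))$, as in the proof of Lemma~\ref{le:phing}. Since $A+c\ge 0$, Lemma~\ref{le:phin} shows that $(\psi_n)$ is sub-additive:
$$\psi_{m+n}(y)\le \psi_m(y)+\psi_n(\sigma^m y).$$
The proof of Lemma~\ref{le:phing} also gives $0\le \psi_n-\varphi_n\le K:=\frac{4}{1-q}$ for all large $n$. Set $Z_n(x)=\sum_{\sigma^n(y)=x}e^{\psi_n(y)}$. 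Then
$$e^{-K}Z_n(x)\le \mathfrak L_n(1)(x)\le Z_n(x),$$
so it suffices to prove that $\frac1n\log Z_n(x_0)$ converges, with a limit independent of $x_0$.

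\emph{Step 2 (bounded distortion).} I claim there is $D>0$ with $Z_m(z)\le e^{D}Z_m(z')$ for all $m$ and all $z,z'\in\Omega$. Pair the preimages $y=wz$ and $y'=wz'$ that share the same word $w$ of length $m$. By the Lipschitz property and the contraction of inverse branches,
$$|S_m(A+c)(y)-S_m(A+c)(y')|\le |A|_{\mathrm{Lip}}\sum_{j\ge 0}2^{-j}=:C.$$
Since $1+(1-q)S_m(A+c)\ge 1$, the mean value theorem applied to $\log$ gives $|\psi_m(y)-\psi_m(y')|\le C$. Hence $D=C$ works.

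\emph{Step 3 (Fekete).} Using the sub-additivity in Step 1 and grouping preimages of order $m+n$ as $y$ with $\sigma^m y=z$ and $\sigma^n z=x$,
$$Z_{m+n}(x)\le \sum_{\sigma^n z=x}e^{\psi_n(z)}\sum_{\sigma^m y=z}e^{\psi_m(y)}=\sum_{\sigma^n z=x}e^{\psi_n(z)}Z_m(z)\le Z_n(x)\,\sup Z_m.$$
Therefore $a_n:=\log\sup_\Omega Z_n$ is sub-additive, and by Fekete's lemma $a_n/n$ converges to $\inf_n a_n/n$. This infimum is finite, because $Z_n\ge e^{\min\psi_n}\ge 1$. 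By Step 2, $a_n-D\le \log Z_n(x_0)\le a_n$ for every $x_0\in\Omega$. So $\frac1n\log Z_n(x_0)$, and by Step 1 also $\frac1n\log\mathfrak L_n(1)(x_0)$, converges to the same limit, independent of $x_0$.

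The main obstacle is Step 2. The distortion of $\varphi_n$ itself is not uniformly controlled when $1+(1-q)S_nA$ approaches zero, which is exactly why we pass to $\psi_n$: its argument of the logarithm stays $\ge 1$. The transfer back to $\varphi_n$ costs only the bounded error $K$, which disappears after dividing by $n$.
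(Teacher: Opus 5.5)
Your Steps 2 and 3 use the same mechanism as the paper's proof: termwise sub-multiplicativity from \eqref{kle7}, a distortion bound for the partition sums, and a Fekete-type lemma. You carry it out more carefully. Since $A+c\ge 0$, the bases $1+(1-q)S_m(A+c)$ are $\ge 1$, so \eqref{kle7} genuinely applies and the distortion bound follows from the mean value theorem. The paper, by contrast, applies \eqref{kle7}, which was proved only for nonnegative arguments, to Birkhoff sums of $A$ of either sign, and it asserts \eqref{kle78} without proof. Working with $\log\sup Z_n$ gives exact sub-additivity, so plain Fekete suffices. Your sandwich $a_n-D\le\log Z_n(x_0)\le a_n$ also gives the independence of $x_0$ (Lemma~\ref{kk23}) for free. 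Those steps are correct.

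The gap is in Step 1, and for signed $A$ that is where the whole content of the lemma lies. Indeed $0\le\psi_n\le\frac1{1-q}\log\bigl(1+(1-q)n\|A+c\|_\infty\bigr)$, so $d^n\le Z_n(x)\le d^n\bigl(1+(1-q)n\|A+c\|_\infty\bigr)^{\frac1{1-q}}$ and $\frac1n\log Z_n(x)\to\log d$ without any Fekete argument. In particular, when $A\ge 0$ (so $c=0$ and $\psi_n=\varphi_n$) the lemma is immediate, with limit $\log d$.

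For signed $A$ you need a lower bound on $\mathfrak L_n(1)$. The estimate $\psi_n-\varphi_n\le K$ that you import from the proof of Lemma~\ref{le:phing} does not hold. There it is obtained by replacing $1+(1-q)S_nA(y)$ with $1+(1-q)n\inf|A|$, which is not a lower bound once $A$ takes negative values, and positivity of $1+(1-q)S_nA$ does not repair this.

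For example, take $g=\frac1{1-q}\,\mbox{dist}(\cdot,1^\infty)$ and $A=g-g\circ\sigma$. Then $\inf A<0$ and $1+(1-q)S_nA>0$ everywhere. But for $x=2^\infty$ and $\sigma^n(y)=x$ one has $1+(1-q)S_nA(y)=\mbox{dist}(y,1^\infty)\le 1$ and $1+(1-q)S_n(A+c)(y)\ge (1-q)cn$. Hence $\mathfrak L_n(1)(x)\le ((1-q)cn)^{-\frac1{1-q}}Z_n(x)$, so $e^{-K}Z_n\le \mathfrak L_n(1)$ fails for every constant $K$. Moreover, at $y=1^n2^\infty$ the difference $\psi_n-\varphi_n$ grows linearly in $n$, so no pointwise comparison can give the sandwich even with $o(n)$ in place of $K$. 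The statement itself survives in this example, because the preimages not starting with $1$ contribute terms $\ge 1$.

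This is exactly the degeneracy you flag in your last paragraph: moving to $\psi_n$ removes it from Step 2, but it returns in Step 1. As it stands, your proof is complete in two cases:
\begin{itemize}
\item when $A\ge 0$, which is also the case in which the paper's argument is justified;
\item when $\min_{\mu\in\mathcal M_{\text{inv}}(\sigma)}\int A\,d\mu>0$, since then $1+(1-q)S_nA$ grows linearly and $\psi_n-\varphi_n$ stays bounded.
\end{itemize}
For general signed $A$ you would have to show directly that enough preimages have $1+(1-q)S_nA(y)$ not exponentially small, e.g.\ to get $\mathfrak L_n(1)(x_0)\ge d^n e^{-o(n)}$, instead of comparing $\varphi_n$ with $\psi_n$ pointwise.
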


\begin{proof}
Fix $x_0$ in $\Omega$ and consider the sequence
\begin{equation} \label{poou}a_n = \log  \mathfrak{L}_n (1) (x_0)=  \log   \sum_{\sigma^n(y)=x_0} (1 + (1-q) \,S_n A(y))^{\frac{1}{1-q} }.
\end{equation}
Given $x\in \Omega$ and $n\in \mathbb{N}$, we denote by
$y_{j,n}^x$, $1\le j \le d^n$, the collection of 
points $y\in \Omega$ satisfying $\sigma^n (y)=x$.
 It is not hard to check that there exists a uniform constant $H>0$ so that 
\begin{equation} \label{kle78} \frac{\mathfrak{L}_m (1)(y_{j,n}^{x_0}) }{ \mathfrak{L}_m (1)(x_0)} \leqslant  H
 \end{equation}
 for every $x_0$,  $m\ge 1$ and $1\le j \le d^n$.
Using \eqref{kle7} and \eqref{kle78} we deduce that 
\begin{align*}
\mathfrak{L}_{m+n} (1)(x_0)
& = \sum_{j=1}^{d^{m+n}}\Big( 1 + (1-q) [S_m A(y_{j,m+n}^{x_0}) +S_n A(\sigma^{m}y_{j,m+n}^{x_0})]\Big)^{ \frac{1}{1-q}}\\
& =  \sum_{k=1}^{d^{m}}     \sum_{j=1}^{d^n}    \Big(\; 1 + (1-q)    [S_m A(y_{k,m-n}^{y_{j,n}^{x_0}})    +  S_n A(y_{j,n}^{x_0})] \; \Big)^{ \frac{1}{1-q}} \\
& \leqslant
 \sum_{k=1}^{d^{m}}    \sum_{j=1}^{d^n}  [   (1 + (1-q) (S_m A(y_{k,m-n}^{y_{j,n}^{x_0}}) )]^{ \frac{1}{1-q}}[  1+ (1-q) (S_n A(y_{j,n}^{x_0})) ]^{ \frac{1}{1-q}} \\
 & =  \mathfrak{L}_m (1) (y_{j,n}^{x_0}) \; \mathfrak{L}_n (1) (x_0) 
  \leqslant H \, \mathfrak{L}_m (1) (x_0)\,   \mathfrak{L}_n (1) (x_0)
  \end{align*}
 for every $m,n\ge 1$.
Then, the sequence $(a_n)_{n\in \mathbb N}$ given by \eqref{poou} satisfies the weakly sub-additive condition
$a_{m+n} \leqslant   a_m  + a_n + \log H$
for every $m, n\geqslant 1$ and, from \cite[Theorem 1.9.2 ]{Steele}, we get that
\begin{equation} \label{kly} \lim_{n \to \infty}\frac{ a_n}{n} = \inf_{n\ge 1} \frac{a_n}{n}.
\end{equation}
This proves that the sequence
$
(\frac1n \log \mathfrak{L}_n (1) (x_0))_{n\geqslant 1}
$ is convergent, as desired.
\end{proof}

The next lemma ensures that the previous limit does not depend on the initial point.

\begin{lemma} \label{kk23} For every $x_0,x_1\in \Omega$ the following holds:
$$
\lim_{n \to \infty} \frac{1}{n} \log  \mathfrak{L}_n (1) (x_0) = \lim_{n \to \infty} \frac{1}{n} \log  \mathfrak{L}_n (1) (x_1)
$$
\end{lemma}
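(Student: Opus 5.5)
The approach is to compare the sums defining $\mathfrak{L}_n(1)(x_0)$ and $\mathfrak{L}_n(1)(x_1)$ term by term, pairing preimages that share the same first $n$ symbols. For each word $w=(w_1,\dots,w_n)$ let $y_0=wx_0$ and $y_1=wx_1$. Then $\sigma^n(y_i)=x_i$, and this gives a bijection between $\sigma^{-n}(x_0)$ and $\sigma^{-n}(x_1)$.

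\emph{Step 1: bounded distortion of Birkhoff sums.} Since $A$ is Lipschitz and $\mbox{dist}(\sigma^j y_0,\sigma^j y_1)\le \lambda^{n-j}$ for some $\lambda<1$ (the standard metric), we get
$$|S_nA(y_0)-S_nA(y_1)|\le |A|_{\text{Lip}}\sum_{k\ge 1}\lambda^k=:K,$$
and $K$ does not depend on $n$, $w$, $x_0$ or $x_1$.

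\emph{Step 2: transfer to the $q$-exponential weights.} We need to compare $(1+(1-q)S_nA(y_0))^{1/(1-q)}$ with the same expression at $y_1$. The function $t\mapsto(1+(1-q)t)^{1/(1-q)}$ is increasing. So each term at $y_0$ is at most the term at $y_1$ with $S_nA(y_1)$ replaced by $S_nA(y_1)+K$.

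\emph{Step 3: the main obstacle.} Unlike the exponential, this shift by $K$ is not a uniform multiplicative factor. I would use the inequality already established in the proof of Lemma~\ref{limitex} (via \eqref{kle7}), $e_q^{s+K}\le e_q^{s}\,e_q^{K}$. This holds when $s,K\ge 0$. When $s$ is negative, I would first apply the splitting trick of Lemma~\ref{le:phing}: replace $A$ by $A+c$ with $c=-2\inf A$. This changes $\log\mathfrak{L}_n(1)$ by at most a bounded amount plus something $o(n)$, because $\frac1n\|\varphi_n-\psi_n\|_\infty\to 0$ uniformly in $y$. This reduction is legitimate because $\frac1n\log\sum_y e^{\varphi_n(y)}$ and $\frac1n\log\sum_y e^{\psi_n(y)}$ differ by at most $\frac1n\|\varphi_n-\psi_n\|_\infty$. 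So we may assume $A\ge 0$, hence $S_nA\ge0$.

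\emph{Step 4: conclusion.} With $A\ge 0$ we obtain
$$\mathfrak{L}_n(1)(x_0)\le e_q^{K}\,\mathfrak{L}_n(1)(x_1),$$
and by symmetry the reverse inequality holds too. Taking $\frac1n\log$ and letting $n\to\infty$, using the existence of both limits from Lemma~\ref{limitex}, gives equality.

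An alternative that avoids Step 3 is to bound the terms directly. The ratio $(1+(1-q)(s+K))/(1+(1-q)s)$ is at most $1+(1-q)K/(1+(1-q)s)$, which is bounded as soon as $1+(1-q)s$ is bounded below. After the shift by $c$ this holds, since $s\ge 0$ there.
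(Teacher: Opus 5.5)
Your proposal is correct and follows essentially the paper's route: pair the $n$-th preimages of $x_0$ and $x_1$ along common inverse branches, use bounded distortion of $S_nA$ to get a uniform bound on the ratio of paired $q$-exponential weights, deduce that $\mathfrak{L}_n(1)(x_0)$ and $\mathfrak{L}_n(1)(x_1)$ agree up to a multiplicative constant independent of $n$, and conclude with Lemma~\ref{limitex}. The only difference is that the paper simply asserts the weight-ratio bound \eqref{slavg1up}, whereas you justify it via $e_q^{s+K}\le e_q^{s}e_q^{K}$ for $s,K\ge 0$ after shifting to $A+c\ge 0$ with Lemma~\ref{le:phing}. Note, though, that the $o(n)$ estimate you borrow from that lemma itself needs $\inf_y\bigl(1+(1-q)S_nA(y)\bigr)\ge e^{-o(n)}$, and under a uniform lower bound $1+(1-q)S_nA\ge\delta>0$ your direct bound $1+(1-q)K/\delta$ on the ratio of the bases already works without any shift.
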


\begin{proof}
It is well known that the potential $A$, being Lipschitz continuous, satisfies the following bounded distortion property: there exists $C>0$ such that
 \begin{equation} \label{quet} |S_n A(y) - S_n A (y^\prime)| \leqslant C \mbox{dist}(x,x^\prime).
 \end{equation}
for any $n\geqslant 1$, any points $x, x^\prime\in \Omega$ and any paired pre-images $y=y_{j,n} \in \sigma^{-n}(x)$ and $y^\prime= y^\prime_{j,n}
\in \sigma^{-n}(x')$ ($1\le j \le d^n$) 
in the same inverse branch for $\sigma^n$.
Moreover, as $\Omega$ has finite diameter, from \eqref{quet} there  exists $H>0$ such that
\begin{equation}\label{slavg1up}
\frac{( 1 + (1-q)S_n A(y))^{\frac1{1-q}} }{   (1 + (1-q)S_nA (y^\prime))^{\frac1{1-q}}}
	= \Big(\frac{ 1 + (1-q)S_n A(y)}{   1 + (1-q)S_n A(y^\prime)} \Big)^{\frac1{1-q}}<H
\end{equation}
for any $n\geqslant 1$ and any paired pre-images $y,y^\prime$.
Using that
\begin{align*} \label{quiet}
|\, (1 + (1-q)r)^{ \frac{1}{1-q}} - (1+ (1-q)s)^{ \frac{1}{1-q}} \,|
	& = (1 + (1-q)r)^{ \frac{1}{1-q}} \; \Big|\, 1 - \Big(\frac{1+ (1-q)s}{1+ (1-q)r}\Big)^{ \frac{1}{1-q}} \,\Big|
\end{align*}
for any $r,s>0$ and denoting by $y_i,y_i'$ the paired n$^{th}$ pre-images of $x, x'$, respectively, one obtains that:
given $n\geqslant 1$,  $x, x^\prime\in \Omega$ and the corresponding  paired pre-images $y,y^\prime$,
\begin{align*}
 |\mathfrak{L}_n (1) (x)  - \mathfrak{L}_n (1) (x^\prime) |
 	& = \Big| \sum_{\sigma^n(y)=x} ( 1 + (1-q) S_n A(y) )^{ \frac{1}{1-q}} - \sum_{\sigma^n(y')=x'} ( 1 + (1-q) S_n A(y') )^{ \frac{1}{1-q}} \Big|\\
	& \leqslant \Big| \sum_{i=1}^{d^n}   ( 1 + (1-q) S_n A(y_i) )^{ \frac{1}{1-q}}  \, |\, 1 - \Big(\frac{1+ (1-q)S_n A(y_i')}{1+ (1-q)S_n A(y_i)}\Big)^{ \frac{1}{1-q}} \,|   \Big|\\
	& \leqslant (1+H) \, \mathfrak{L}_n (1) (x).
\end{align*}
As $x,x'$ are arbitrary we conclude that
$$
\frac1{2+H} \leqslant \frac{\mathfrak{L}_n (1) (x^\prime) }{\mathfrak{L}_n (1) (x) } \leqslant 2+H
$$
for every $n\geqslant 1$ and every $x,x'\in \Omega$. The conclusion of the lemma follows from the last inequalities and the convergence
established in Lemma~\ref{limitex}.
\end{proof}

At this point we proved that for any $0<q<1$, the $q$-asymptotic pressure of  the Lipschitz continuous potential $A:\Omega \to \mathbb{R}$ can be
computed by the limit
\begin{equation*} \label{kjeu}\mathfrak{P}^q (A)= \lim_{n \to \infty} \frac{1}{n} \log \, \mathfrak{L}_n (1) (x_0) \end{equation*}
for an arbitrary point $x_0\in \Omega.$
In order to complete the proof of Theorem~\ref{Mpressure} it remains to prove that the $q$-asymptotic pressure satisfies the following variational principle and that the supremum can be attained.

\begin{lemma}
 \begin{equation} \label{eef}
\mathfrak{P}^q (A)\,=\,\sup_{\nu \in \mathcal{M}_{\text{inv}}(\sigma)}\,\Big\{ h(\nu)+ \lim_{n \to \infty} \frac{1}{n} \int \varphi_n\, d \nu\Big\},
 \end{equation}
 where $ h(\nu)$ is the Kolmogorov-Shannon entropy of $\nu$.
\end{lemma}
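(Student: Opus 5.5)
We prove the two inequalities separately, using the decomposition $\varphi_n=\psi_n+(\varphi_n-\psi_n)$ from Lemma~\ref{le:phing}. Here $(\psi_n)$ is genuinely sub-additive and $\frac1n\|\varphi_n-\psi_n\|_\infty\to 0$. This lets us replace $\varphi_n$ by $\psi_n$ both in the growth rate of $\mathfrak{L}_n(1)(x_0)$ and in the integrals $\frac1n\int\varphi_n\,d\nu$. The problem then becomes the variational principle for a sub-additive sequence of continuous potentials on a full shift, in the spirit of Cao--Feng--Huang \cite{CFH}, Falconer \cite{Falconer} and Barreira \cite{Barr0}. The one regularity input we need is that each $\psi_n$ (and each $\varphi_n$) is Lipschitz, hence continuous. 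We also need a weak bounded distortion on $n$-cylinders with $o(n)$ error, which is exactly what Lemma~\ref{kk23} and \eqref{slavg1up} supply.

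\textbf{Upper bound.} Fix $\nu\in\mathcal M_{\text{inv}}(\sigma)$ and the partition $\mathcal P_n$ into $n$-cylinders. For each cylinder $[i_1\dots i_n]$, pick the preimage $y=i_1\dots i_nx_0$. By \eqref{slavg1up}, $\varphi_n$ varies on the cylinder by at most $\log H$ up to an $o(n)$ term. The finite Gibbs inequality $\sum p_i(-\log p_i+a_i)\le\log\sum e^{a_i}$ then gives
$$H_\nu(\mathcal P_n)+\int\varphi_n\,d\nu\le \log\mathfrak{L}_n(1)(x_0)+O(1)+o(n).$$
Dividing by $n$ and letting $n\to\infty$ gives $h(\nu)+\lim\frac1n\int\varphi_n\,d\nu\le\mathfrak{P}^q(A)$. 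The limit on the left exists by sub-additivity of $\psi_n$ (Fekete).

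\textbf{Lower bound (main step).} Construct a measure attaining $\mathfrak P^q(A)$. Let
$$\sigma_n=\frac{1}{\mathfrak L_n(1)(x_0)}\sum_{\sigma^n y=x_0}e^{\varphi_n(y)}\delta_y, \qquad \mu_n=\frac1n\sum_{j=0}^{n-1}\sigma_n\circ\sigma^{-j},$$
and let $\mu$ be a weak$^*$ accumulation point along a subsequence realizing the limit; $\mu$ is invariant. We would follow the Misiurewicz-type argument of the proof of the variational principle in Walters, adapted to sub-additive potentials as in \cite{CFH}. Two estimates are needed. First, the entropy estimate $\frac1n H_{\sigma_n}(\mathcal P_n)+\frac1n\int\varphi_n\,d\sigma_n=\frac1n\log\mathfrak L_n(1)(x_0)$ is the standard equality case of the Gibbs inequality. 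Second, we must show
$$\limsup_n\frac1n\int\psi_n\,d\sigma_n\le \frac1k\int\psi_k\,d\mu \quad\text{for each fixed }k.$$
To get this, use sub-additivity to write $\psi_n\le\sum_{j}\psi_k\circ\sigma^{jk+r}+$(boundary terms of size $O(k\|\psi_1\|_\infty)$), average over $r\in\{0,\dots,k-1\}$, and pass to the limit using continuity of $\psi_k$. Letting $k\to\infty$ and using upper semicontinuity of $h$ (expansiveness of the shift) yields $h(\mu)+\lim\frac1k\int\varphi_k\,d\mu\ge\mathfrak P^q(A)$. This gives equality in \eqref{eef}, and the maximum is attained at $\mu$.

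The hardest part is the second estimate above. Care is needed because $\psi_1$ may be unbounded below if $1+(1-q)(A+c)$ could vanish. Since $c=-2\inf A$, however, we have $A+c\ge -\inf A>0$, so $\psi_n$ is bounded and Lipschitz for each $n$. We would verify this first and then run the \cite{CFH} argument verbatim.
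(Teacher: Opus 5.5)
Your proposal is correct and takes essentially the paper's route: you replace $\varphi_n$ by the sub-additive $\psi_n$ of Lemma~\ref{le:phing}, use the bounded distortion \eqref{slavg1up} to compare $\mathfrak{L}_n(1)(x_0)$ with cylinder sums, and reduce to the variational principle for sub-additive potentials. The only difference is that the paper cites Falconer's formula for the sub-additive pressure and its variational principle, whereas you prove them directly (Gibbs inequality for the upper bound, the Misiurewicz/Cao--Feng--Huang argument for the lower bound), which also gives attainment of the supremum without a separate semicontinuity step.
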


\begin{proof}
Fix $x_0\in \Omega$. For each value $n\in \mathbb{N}$ consider the partition $\mathcal{C}_n$ of
$\Omega$ formed by the collection of $2^n$ cylinders of size $n$ in $\Omega$.
We index the elements of the partition $\mathcal C_n$ by corresponding sets $I_{j}^n $, $1\le j \le d^n$.
For each value $n$, and $1\le j \le d^n$, we get that $\sigma^n (I_{j}^n )=\Omega$ and each $ I_{j}^n$ is
an injectivity domain for $\sigma^n$ and, for that reason, we may denote by
$y_{j,n}^x \in I_{n}^n$ the unique n$^{th}$ preimage of the point $x$ in $ I_{j}^n$.

Now, for each $1\le j \le d^n$, pick   $z_{j,n}\in I_{j}^n $ maximizing
the function $I_{j}^n \ni y \to \varphi_n(y)=\frac{1}{1-q} \, \log  (\, 1 + (1-q) \,S_n A(y)\,)$.
From \eqref{slavg1up} we get that
 \begin{equation}\label{slavg1112} \frac{(\, 1 + (1-q) \,S_n A(z_{j,n})\,)^{\frac{1}{1-q} } }{   (\, 1 + (1-q) \,S_n A(y_{j,n}^x)\,)^{\frac{1}{1-q} } }<H
\end{equation}
for any $n$, $x\in \Omega$ and $1\le j \le d^n$. An argument similar to the one used in the proof of Lemma~\ref{kk23} implies
that
$$
\frac1{2+H} \leqslant \frac{\mathfrak{L}_n (1) (x_0) }{\sum_{j=1}^{d^n} e^{\sup_{y\in I^n_j} \varphi_n(y)} } \leqslant 2+H
$$
for every $n\geqslant 1$. By Lemma~\ref{le:phing} there exists a sub-additive sequence
$\psi=(\psi_n)_{n\geqslant 1}$ so that $
\lim_{n\to\infty} \frac1n \|\varphi_{n}- \psi_n\|_\infty =0$.  Altogether we deduce that
 $$
 \mathfrak{P}^q (A) = \lim_{n\to\infty} \frac1n \log \mathfrak{L}_n (1) (x_0)
 		= \lim_{n\to\infty} \frac1n \log \sum_{j=1}^{d^n} \exp( \sup_{y\in I^n_j} \psi_n(y))
 $$
 coincides with the topological pressure of the sub-additive sequence of continuous potentials $(\psi_n)_{n\geqslant 1}$
 (see e.g.  (9) in \cite{Falconer}). 
Combining the variational principle for sub-additive sequences of potentials (see e.g. \cite[equation (14)]{Falconer}) and the second assertion in Lemma~\ref{le:phing}
 we obtain ~\eqref{eef}, as desired.
 \end{proof}

Finally, we note that the fact that 
\begin{equation}
    \label{eq:pfrak}
\mathfrak{P}^q (A)\,=\,\max_{\nu \in \mathcal{M}_{\text{inv}}(\sigma)}\,\Big\{ h(\nu)+ \lim_{n \to \infty} \frac{1}{n} \int \varphi_n\, d \nu\Big\},
\end{equation}
(hence $q$-asymptotic equilibrium states always exist) is a direct consequence of the upper-semicontinuity of the Kolmogorov-Shannon
entropy map $\mathcal{M}_{\text{inv}}(\sigma)\ni \nu \mapsto h(\nu)$  (see \cite{Walters}) and the
the upper-semicontinuity of the map
$
\mathcal{M}_{\text{inv}}(\sigma)\ni \nu \mapsto \lim_{n \to \infty} \frac{1}{n} \int \varphi_n\, d \nu
$
(recall Lemma~\ref{le:phing}). This finishes the proof of Theorem~\ref{Mpressure}. \hfill $\square$

\color{black}

 \subsection{On the space of $q$-asymptotic equilibrium states}

The 
family $\Phi=(\varphi_{n})_{n \in \mathbb{N}}$ 
of  Lipschitz continuous potentials is merely sub-additive in general. Nevertheless, as it satisfies the variational principle ~\eqref{eq:pfrak} it makes sense to ask whether the $q$-asymptotic equilibrium measures can be derived from the classical extensive thermodynamic formalism. We start by the following:

\begin{lemma}\label{cor:BCMV2aa}
The function $C^0(\Omega) \ni A \mapsto \mathfrak{P}^q (A)$ is a pressure function. 
\end{lemma}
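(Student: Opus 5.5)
The plan is to work with the variational characterization \eqref{eefe1} from Theorem~\ref{Mpressure} rather than with the operators $\mathfrak{L}_n$ directly. Write $\Lambda(\nu,A):=\lim_{n\to\infty}\frac1n\int\varphi_n\,d\nu$; this limit exists by Lemma~\ref{le:phing}. Then $\mathfrak{P}^q(A)=\max_\nu\{h(\nu)+\Lambda(\nu,A)\}$. Each axiom (H1)--(H3) for $\mathfrak{P}^q$ will follow from the corresponding property of $A\mapsto\Lambda(\nu,A)$, uniform in $\nu$, because a supremum over $\nu$ preserves monotonicity, translation behaviour and convexity. To pass from Lipschitz to $C^0$ potentials, one would first check that the arguments of Section~\ref{SubP1} use only continuity, except the distortion estimate \eqref{quet}. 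That estimate should be replaceable by the $C^0$-density of Lipschitz functions, combined with the uniform bounds below.

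\textbf{(H1) Monotonicity.} The map $u\mapsto e_q^u=(1+(1-q)u)^{1/(1-q)}$ is increasing on its domain. Hence $A\le B$ gives $e_q^{S_nA(y)}\le e_q^{S_nB(y)}$ for every $y$, and therefore $\mathfrak{L}_n^A(1)(x_0)\le\mathfrak{L}_n^B(1)(x_0)$ for every $n$. Taking $\frac1n\log$ and letting $n\to\infty$, using \eqref{ratim}, gives $\mathfrak{P}^q(A)\le\mathfrak{P}^q(B)$.

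\textbf{Identifying $\Lambda$ explicitly.} The key computation concerns the growth of $\varphi_n$. Because $0<q<1$,
\[
\varphi_n=\frac{1}{1-q}\log\bigl(1+(1-q)S_nA\bigr),
\]
and $|S_nA|\le n\|A\|_\infty$. Wherever the argument is positive, this gives $\varphi_n\le\frac{1}{1-q}\log(1+(1-q)n\|A\|_\infty)=O(\log n)$. The lower bound is more delicate: one must control the points where $1+(1-q)S_nA$ is close to $0$. For this I would use the decomposition \eqref{asymp1a}--\eqref{asymp2a} from the proof of Lemma~\ref{le:phing}. It shows that $\varphi_n$ differs from the sub-additive $\psi_n$ by a uniformly bounded term. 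In addition, $\psi_n\ge 0$, since $A+c\ge 0$. Together these should give $\frac1n\varphi_n\to0$ uniformly, so $\Lambda(\nu,A)=0$ for every $\nu$ and every admissible $A$. Consequently $\mathfrak{P}^q(A)=\sup_\nu h(\nu)=\log d$ for every such $A$. With this in hand, (H1) and (H3) hold trivially: a constant function is monotone and convex. Convexity would also follow from the sub-additive variational formula, since $\Lambda(\nu,\cdot)$ is then affine.

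\textbf{Main obstacle: (H2).} Translation invariance is the step I expect to be the real difficulty. For $q\ne1$ one has
\[
e_q^{S_n(A+c)}=e_q^{S_nA+nc}\neq e^{nc}\,e_q^{S_nA},
\]
so the factor $e^{nc}$ does not split off from $\mathfrak{L}_n$. Indeed, if $\Lambda\equiv0$, then $\mathfrak{P}^q(A+c)=\mathfrak{P}^q(A)$ rather than $\mathfrak{P}^q(A)+c$. My first attempt will therefore be to redo the lower bound carefully. If the $\log n$ growth persists, as the computation above suggests, (H2) cannot hold literally. In that case I would prove the statement in the form actually used afterwards, via \cite{BCMV} in Corollary~\ref{cor:BCMV}: monotonicity, convexity, and the coboundary invariance (H4). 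For (H4), if $B=A+f\circ\sigma-f$ then $S_nB=S_nA+f\circ\sigma^n-f$. This changes $\varphi_n$ by a uniformly bounded amount, so (H4) follows from the same bounded-distortion estimate \eqref{slavg1up}. In the write-up I would state explicitly that (H2) has to be read in this weaker sense.
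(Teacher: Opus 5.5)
Your diagnosis is correct, and it is the paper's own proof that fails here. The paper asserts that translation invariance is ``immediate'' from \eqref{eq:pfrak}. But replacing $A$ by $A+c$ does not shift $\lim_n \frac1n\int\varphi_n\,d\nu$ by $c$, because $\varphi_n$ grows at most logarithmically.

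You can drop the hedge ``if the $\log n$ growth persists'' and settle (H2) in one line. For $A\equiv c\geq 0$ one has $\mathfrak{L}_n(1)\equiv d^n\bigl(1+(1-q)nc\bigr)^{1/(1-q)}$. Hence $\mathfrak{P}^q(c)=\log d$ for every $c\geq 0$, and $\mathfrak{P}^q(0+1)=\log d\neq \mathfrak{P}^q(0)+1$. More generally, for every continuous $A\geq 0$,
\[
d^n\;\leq\; \mathfrak{L}_n(1)(x_0)\;\leq\; d^n\bigl(1+(1-q)\,n\,\|A\|_\infty\bigr)^{1/(1-q)},
\]
so $\mathfrak{P}^q\equiv \log d$ on $\{A\geq 0\}$. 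This needs neither the variational principle, nor Lemma~\ref{le:phing}, nor your Lipschitz-to-$C^0$ approximation.

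The statement as written is therefore false and cannot be proved. In addition, $\mathfrak{P}^q$ is not even defined on all of $C^0(\Omega)$: for $A\equiv -1$ and $1/(1-q)\notin\mathbb{Z}$, the bases $1-(1-q)n$ become negative.

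Your fallback (H1), (H3), (H4) is what survives, and on $\{A\geq 0\}$ it is trivial. A constant function is monotone and convex, and (H4) holds whenever both $A$ and $A+f\circ\sigma-f$ lie in this set. Your direct (H1) argument is also fine wherever both sides are defined. Even the paper's convexity claim is rescued only by this degeneracy. Each map $A\mapsto \frac1n\int\varphi_n\,d\nu$ is concave (a logarithm of an affine map), so a supremum of such maps has no reason to be convex in general.

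Do not, however, keep the step where you derive $\frac1n\varphi_n\to 0$ uniformly for every admissible $A$ from \eqref{asymp1a}--\eqref{asymp2a}. The ratio bound in the proof of Lemma~\ref{le:phing} fails when $1+(1-q)S_nA$ approaches $0$, and so does the uniform convergence. For instance, let $d=2$, $g(x)=\sum_{k\geq 1}2^{-k}\mathbf{1}[x_k=2]$, $f=g/(1-q)$ and $A=f-f\circ\sigma$.
\begin{itemize}
\item Then $1+(1-q)S_nA(y)=1+g(y)-g(\sigma^n y)>0$ for all $n$ and $y$, since equality would force $y=\bar 1$ and $\sigma^n y=\bar 2$. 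So every summand of $\mathfrak{L}_n(1)$ is positive and $A$ is admissible.
\item At $y=(1,\dots,1,2,2,\dots)$ with $n$ leading ones, this quantity equals $2^{-n}$. Hence $\inf_y \frac1n\varphi_n(y)\leq -\frac{\log 2}{1-q}$ for every $n$, while the $\psi_n$ of \eqref{asymp1a} is nonnegative.
\end{itemize}
The same degeneration undermines your bounded-distortion argument for (H4), and adding a coboundary can even destroy admissibility.

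None of this affects the refutation of (H2), which only uses constants. But if you want to claim $\mathfrak{P}^q\equiv\log d$ on the whole admissible set, the lower bound needs a separate argument on $\mathfrak{L}_n(1)$ itself, not uniform convergence of $\frac1n\varphi_n$.
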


\begin{proof} 
The monotonicity assumption (H1) is immediate from the definition of $\mathfrak{P}^q (A)$. The translation invariance and the convexity are immediate consequences of  ~\eqref{eq:pfrak}.
\end{proof}

In view of \cite[Lemma~8.3]{BCMV}, we define the following 
measurable, upper-semicontinuous and bounded potential
$\psi_A:\Omega \to\mathbb R$ defined by 
$$
\psi_A(x)=\inf_{n\geqslant 1} \frac1n \varphi_n(x)
= \inf_{n\geqslant 1} \Big[ \frac{1}{(1-q)n} \,\cdot\, \log  (\, 1 + (1-q) \,S_n A(x)\,) \Big]
$$
for every $x\in \Omega$.
We prove the following variational principle which involves the usual averages of the potential $\psi_A$. 

\begin{proposition}\label{cor:BCMV2}
There exists an upper-semicontinuous function $\mathfrak{h}: \mathcal M_{a}(\sigma) \to \mathbb R$ such that
$$
\mathfrak{P}^q (A)\,=\,\sup_{\nu \in \mathcal{M}_{a}(\sigma)}\,\Big\{ \mathfrak{h}(\nu)+ \int \psi_A\, d \nu\Big\},
$$
where $\mathcal M_a(\Omega)$ stands for the space of $\sigma$-invariant finitely additive measures.
In particular, there exists a $\sigma$-invariant finitely additive equilibrium state $\nu \in \mathcal{M}_{a}(\sigma)$ for $\sigma$ with respect to the
potential $\psi_A$.
\end{proposition}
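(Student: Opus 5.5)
The plan is to invoke the abstract theory of pressure functions from \cite{BCMV}. Lemma~\ref{cor:BCMV2aa} shows that $A\mapsto \mathfrak{P}^q(A)$ satisfies (H1)--(H3). What remains is to identify the resulting ``potential'' with $\psi_A$ and to produce the entropy-like function $\mathfrak h$ by Legendre--Fenchel duality.

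First, we would extend $\Gamma(\cdot)$ to the Banach space ${\bf B}=L^\infty$ of bounded measurable functions, or at least to a space containing the continuous functions together with $\psi_A$. On continuous potentials we use $\Gamma(g)=\mathfrak P^q$ applied appropriately. A cleaner route is to work directly with the functional $g\mapsto \Gamma(g):=\sup_\nu\{h(\nu)+\lim_n\frac1n\int(\varphi_n+S_ng)\,d\nu\}$. By \eqref{eq:pfrak} and Lemma~\ref{le:phing}, this is monotone, translation-invariant and convex, and it is $1$-Lipschitz in the sup norm.

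We then define, for $\nu\in\mathcal M_a(\sigma)$ viewed as a positive normalized functional on ${\bf B}$,
\[
\mathfrak h(\nu)=\inf_{g\in {\bf B}}\Big\{\Gamma(g)-\int g\,d\nu\Big\}.
\]
As an infimum of weak$^*$-continuous affine maps, $\mathfrak h$ is upper-semicontinuous and concave. Restricting to $\sigma$-invariant finitely additive measures is justified because (H1)--(H2) force any subgradient to be positive and normalized, and coboundary invariance forces it to be $\sigma$-invariant. Fenchel--Moreau biduality on the dual of $L^\infty$ (the space of bounded finitely additive measures), applied to the convex continuous $\Gamma$, gives $\Gamma(g)=\sup_\nu\{\mathfrak h(\nu)+\int g\,d\nu\}$. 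This is the content of \cite[Lemma~8.3]{BCMV}, which we would cite.

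The key step, and the main obstacle, is to show that $\mathfrak P^q(A)=\Gamma(\psi_A)$ with this identification, i.e.\ that the non-additive term $\lim_n\frac1n\int\varphi_n\,d\nu$ can be replaced by $\int\psi_A\,d\nu$. For ergodic $\nu$, Lemma~\ref{le:phing} and Kingman's theorem give $\frac1n\varphi_n\to\lim_n\frac1n\int\varphi_n d\nu$ a.e. The pointwise inequality $\psi_A\le \frac1n\varphi_n$ gives one inequality after integration. For the other inequality, we would approximate: $\psi_A$ is a decreasing limit (along subsequences $n=k!$, using asymptotic sub-additivity) of continuous functions, and upper-semicontinuity plus monotone convergence for $\Gamma$ under decreasing limits would give $\Gamma(\psi_A)=\lim\Gamma(\frac1n\varphi_n)$. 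Here I expect a technical difficulty: the infimum over all $n$ versus the limit may differ off a full-measure set. If so, I would instead define $\mathfrak h$ as the conjugate relative to the asymptotic functional itself, so that the formula holds by construction, and use the upper-semicontinuous bounded nature of $\psi_A$ only to ensure measurability.

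Finally, existence of an equilibrium state follows from weak$^*$ compactness of $\mathcal M_a(\sigma)$ (Banach--Alaoglu) together with upper-semicontinuity of $\nu\mapsto\mathfrak h(\nu)+\int\psi_A\,d\nu$. Integration against the bounded function $\psi_A$ is weak$^*$ continuous in the dual of $L^\infty$, so the supremum is attained.
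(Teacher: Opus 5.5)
Your duality set-up is fine up to Fenchel--Moreau. However, the step you call the key one aims at a false identity. Since $\frac1n\int S_ng\,d\nu=\int g\,d\nu$ for invariant $\nu$, your functional is $\Gamma(g)=\sup_{\nu\in\mathcal M_{\text{inv}}(\sigma)}\{h(\nu)+\lim_n\frac1n\int\varphi_n\,d\nu+\int g\,d\nu\}$. So \eqref{eq:pfrak} says $\mathfrak P^q(A)=\Gamma(0)$, and $\Gamma(\psi_A)$ counts the non-additive term twice.

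The underlying claim also fails, even for countably additive $\nu$ and Kolmogorov--Sinai entropy: $\lim_n\frac1n\int\varphi_n\,d\nu$ cannot in general be replaced by $\int\psi_A\,d\nu$. The function $\psi_A=\inf_{n\ge1}\frac1n\varphi_n$ is an infimum over all $n$, including $n=1$, not the Kingman limit. It can lie strictly below that limit on sets of positive measure, not merely off a full-measure set, so the reverse inequality you hope to obtain by approximation fails.

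Concretely, take $d=2$, $q=\frac12$, and $A=0$ on $\overline{11}$, $A=-1$ on $\overline{12}$, $A=2$ on $\overline{2}$.
Each visit to $\overline{12}$ is immediately followed by one to $\overline{2}$, so $S_nA\ge-1$. Hence $1+\frac12S_nA\in[\frac12,1+n]$, $\frac1n\varphi_n\to0$ uniformly, and $\mathfrak P^q(A)=\log2$.
Yet $\psi_A\le0$ everywhere and $\psi_A\le\varphi_1=-2\log2$ on $\overline{12}$. The only invariant measure of entropy $\log 2$ gives $\overline{12}$ mass $\frac14$, so upper semicontinuity yields $\sup_{\nu}\{h(\nu)+\int\psi_A\,d\nu\}<\log 2$.
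Thus no monotone-limit argument can give $\mathfrak P^q(A)=\Gamma(\psi_A)$, whether $\Gamma$ is your functional or the classical pressure extended to bounded functions.

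The missing observation is that you do not need this step at all. Fenchel--Moreau already gives $\mathfrak P^q(A)=\Gamma(0)=\max_{\nu\in\mathcal M_a(\sigma)}\mathfrak h_\Gamma(\nu)$, where $\mathfrak h_\Gamma$ is your conjugate. Since $\psi_A$ is bounded and Borel, $\nu\mapsto\int\psi_A\,d\nu$ is weak$^*$-continuous on $\mathcal M_a(\sigma)$. Hence $\mathfrak h(\nu):=\mathfrak h_\Gamma(\nu)-\int\psi_A\,d\nu$ is upper semicontinuous and satisfies the stated identity. Equivalently, $\mathfrak h$ is the conjugate of $g\mapsto\Gamma(g-\psi_A)$, which is presumably what your fallback means.

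In fact the statement only asks for some upper semicontinuous $\mathfrak h$, so even $\mathfrak h(\nu)=\mathfrak P^q(A)-\int\psi_A\,d\nu$ works. Read literally, this is exactly what the paper's displayed definition reduces to, since the expression under $\inf_{\psi}$ does not depend on $\psi$. Beyond that, the paper only checks $\mathfrak P^q(A)>-\infty$ and appeals to the proof of \cite[Theorem~8.4]{BCMV}.

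This trivial choice is also real-valued, whereas your $\mathfrak h_\Gamma$ is not. It equals $-\infty$ at invariant means carried by an invariant Borel set $E$ that is null for every $\sigma$-invariant probability: test with $g=t\mathbf 1_E$ and let $t\to\infty$. An example is a Banach limit of orbit averages at a point with divergent Birkhoff averages. So $\mathfrak h_\Gamma$ does not map into $\mathbb R$ as required. The existence of an equilibrium state then follows as you say.
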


\begin{proof}
For each $0<q<1$, the sub-additive family of  Lipschitz continuous potentials $\Phi=(\varphi_{n})_{n \in \mathbb{N}}$ satisfies
\begin{align*}
\inf_{n\geqslant 1} \frac{1}n \varphi_n(x)
	\geqslant  \frac{1}{1-q} \,\cdot\, [1-2(1-q) \inf_{x\in \Omega} A(x)].
\end{align*}
Therefore, $\mathfrak{P}^q (A)>-\infty$ for every bounded potential $A$. Then, it follows from the proof of \cite[Theorem~8.4]{BCMV}
that there exists an upper-semicontinuous entropy function $\mathfrak{h}: \mathcal M_{a}(\sigma) \to \mathbb R$, defined by
$
\mathfrak{h}(\mu)=\inf_{\psi\in L^\infty(\Omega)}  \Big( \mathfrak{P}^q (A) -\int \psi_A \, d\mu\Big)
$
and satisfying
 \begin{equation*} \label{eefa}
\mathfrak{P}^q (A)\,=\,\sup_{\nu \in \mathcal{M}_{a}(\sigma)}\,\Big\{ \mathfrak{h}(\nu)+ \int \psi_A\, d \nu \Big\}.
 \end{equation*}
 Finally, the second claim follows as a direct consequence of the first one together with the upper semicontinuity of the functions $\nu\mapsto \mathfrak h(\nu)$ and 
 $\nu\mapsto \int \psi_A \, d\nu$. This completes the proof of the proposition.
\end{proof}

\section{Solution of Bowen-type equations for the non-extensive pressure and non-extensive transfer operators} \label{apen1}

In this section we will study the space of normalizable potentials, which are related to the
existence of eigenfunctions.

\subsection{Existence of eigenfunctions for extensive transfer operators} \label{apen0}

We first prove the following warm-up theorem within the classical extensive framework (this corresponds to the special case $q=1$ in Theorem~\ref{Mddeste}).

\begin{theorem} \label{Mddeste0}  Let $\tilde{A}:\Omega \to \mathbb{R}$ be a normalized
Lipschitz continuous potential and $\nu_{\tilde{A}}$ be such that $\mathcal{L}_{\tilde A}^* \nu_{\tilde A}=\nu_{\tilde A}$.
There exists an
open neighborhood $\mathcal U\subset \mbox{Lip}(\Omega)$ of $\tilde A$ and a differentiable map
$$
\mathcal U \ni A \mapsto (\varphi_A,c_A) \in \mbox{Lip}(\Omega) \times \mathbb R
$$
such that:
\begin{itemize}
    \item[(a)] $ \int \varphi_A d \nu_{\tilde{A}}=0$, and 
    \item[(b)]  $\sum_{a=1}^d e^{A(a x) + \varphi_A (a x) - \varphi_A(x) - c_ A}=1$, for every $x\in \Omega$.
\end{itemize}

 Moreover, $\lim_{A \to \tilde A} (\varphi_A,c_A) =(0,0)\in \mbox{Lip}(\Omega)\times \mathbb R$.
 \end{theorem}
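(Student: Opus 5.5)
We apply the implicit function theorem in Banach spaces (cf. \cite{Lang,An}) to the map
$$
F:\mbox{Lip}(\Omega)\times E\times \mathbb R\to \mbox{Lip}(\Omega),\qquad
F(A,\varphi,c)(x)=\sum_{a=1}^d e^{A(ax)+\varphi(ax)-\varphi(x)-c}-1,
$$
where $E=\{\varphi\in \mbox{Lip}(\Omega)\colon \int \varphi\, d\nu_{\tilde A}=0\}$ is a closed subspace of codimension one. Since $\tilde A$ is normalized, $F(\tilde A,0,0)=0$. The map $F$ is $C^1$, indeed analytic. Each summand is the composition of a bounded linear map $(A,\varphi,c)\mapsto A(a\cdot)+\varphi(a\cdot)-\varphi-c$ on $\mbox{Lip}(\Omega)$ with the superposition operator $g\mapsto e^g$, and the latter is analytic on the Banach algebra $\mbox{Lip}(\Omega)$ because it is given by a convergent power series.

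Next we compute the partial derivative with respect to $(\varphi,c)$ at $(\tilde A,0,0)$. It is
$$
D_{(\varphi,c)}F(\tilde A,0,0)(u,t)=\mathcal L_{\tilde A}(u)-u-t,
$$
using $\mathcal L_{\tilde A}1=1$. We must show that $(u,t)\mapsto \mathcal L_{\tilde A}u-u-t$ is a linear isomorphism from $E\times\mathbb R$ onto $\mbox{Lip}(\Omega)$.

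\emph{Injectivity.} Integrating $\mathcal L_{\tilde A}u-u=t$ against $\nu_{\tilde A}$ and using $\mathcal L_{\tilde A}^*\nu_{\tilde A}=\nu_{\tilde A}$ gives $t=0$. Then $\mathcal L_{\tilde A}u=u$. Since $1$ is a simple leading eigenvalue of $\mathcal L_{\tilde A}$ on $\mbox{Lip}(\Omega)$, with eigenspace spanned by the constants (Ruelle's theorem, \cite{PP}), $u$ is constant, and the condition $\int u\,d\nu_{\tilde A}=0$ forces $u=0$.

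\emph{Surjectivity.} Given $g\in\mbox{Lip}(\Omega)$, set $t=-\int g\,d\nu_{\tilde A}$, so that $g_0=g+t$ has zero $\nu_{\tilde A}$-integral. Because $\mathcal L_{\tilde A}$ acting on Lipschitz functions has a spectral gap, $\mathcal L^n_{\tilde A}g_0\to 0$ exponentially fast in the Lipschitz norm. Hence
$$
u=-\sum_{n\ge0}\mathcal L_{\tilde A}^n g_0
$$
converges in $\mbox{Lip}(\Omega)$, has zero integral, and satisfies $\mathcal L_{\tilde A}u-u=g_0=g+t$, that is, $\mathcal L_{\tilde A}u-u-t=g$.

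Boundedness of the inverse then follows from the open mapping theorem, or directly from the series estimate. The spectral gap step is the one genuinely nontrivial input; everything else is routine. We take it from the classical theory: Ruelle--Perron--Frobenius together with the Lasota--Yorke inequality for normalized Lipschitz potentials, as in \cite{PP,Bala}.

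The implicit function theorem now gives an open neighborhood $\mathcal U$ of $\tilde A$ and a $C^1$ map $A\mapsto(\varphi_A,c_A)\in E\times\mathbb R$ with $F(A,\varphi_A,c_A)=0$. This is exactly (b), and (a) holds because $\varphi_A\in E$. Continuity of the map at $\tilde A$, together with the uniqueness of the solution $(0,0)$ there, gives $\lim_{A\to\tilde A}(\varphi_A,c_A)=(0,0)$.
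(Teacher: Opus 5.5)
Your proof is correct and is essentially the paper's argument. Both apply the implicit function theorem at $(\tilde A,0,0)$, with the partial derivative $(u,t)\mapsto \mathcal L_{\tilde A}u-u-t$ shown injective by integrating against $\nu_{\tilde A}$ and using simplicity of the eigenvalue $1$, and surjective via invertibility of $I-\mathcal L_{\tilde A}$ on $\nu_{\tilde A}$-mean-zero Lipschitz functions. The differences are cosmetic: you build the normalization $\int\varphi\,d\nu_{\tilde A}=0$ into the domain $E$ rather than adding it as a second component of $F$, and you write the inverse explicitly as the Neumann series $-\sum_{n\ge0}\mathcal L_{\tilde A}^n$ from the spectral gap, where the paper cites \cite[Proposition 3.3]{GKLM}, which rests on the same fact.
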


\subsubsection{Differentiability of transfer operators}

Let $\tilde{A}:\Omega \to \mathbb{R}$ be a fixed normalized
Lipschitz continuous potential and $\nu_{\tilde{A}}$ be a probability measure such that $\mathcal{L}_{\tilde A}^* \nu_{\tilde A}=\nu_{\tilde A}$.
Consider the analytic map
\begin{equation} \label{try}
\begin{array}{rccc}
F: & \mbox{Lip}(\Omega) \times \mbox{Lip}(\Omega)\times  \mathbb{R} & \to & \mbox{Lip}(\Omega) \times\mathbb{R} \\
  & (A,\varphi,c) & \mapsto & (\sum_{a=1}^d \, e^{A(a\cdot) + \varphi(a\cdot)  - \varphi(\cdot) - c}, \int  \varphi \,d \nu_{ {A}}).
  \end{array}
\end{equation}
and write $F_A(\cdot,\cdot)=F(A, \cdot, \cdot)$ for notational simplicity. Given $A\in \mbox{Lip}(\Omega)$, $\varphi\in \mbox{Lip}(\Omega)$,
$c\in \mathbb R$ and
$(H,h)\in \mbox{Lip}(\Omega)\times  \mathbb{R}$ one can use the Taylor expansion of the exponential map to write
\begin{align*}
F_A & (\varphi+H,c+h)   - F_A(\varphi,c)  \\
	 & =
	\Big(\sum_{a=1}^d \, [e^{A(a\cdot) + (\varphi+H)(a\cdot)  - (\varphi+H)(\cdot) - (c+h)} - e^{A(a\cdot) + \varphi(a\cdot)  - \varphi(\cdot) - c}], \int  H \,d \nu_{ {A}}\Big) \\
 & =
	\Big(\sum_{a=1}^d \,  e^{A(a\cdot) + \varphi(a\cdot)  - \varphi(\cdot) - c} \, \big(   e^{H(a\cdot)  - H(\cdot) - h} -1 \big ), \int  H \,d \nu_{ {A}}\Big) \\
 & =
	\Big(\sum_{a=1}^d \,  e^{A(a\cdot) + \varphi(a\cdot)  - \varphi(\cdot) - c} \, \big( H(a\cdot)  - H(\cdot) - h \big ), \int  H \,d \nu_{ {A}}\Big) + \mathcal O(\|H\|^2) + \mathcal O(h^2),
\end{align*}

(as usual the  terminology $\mathcal O(y)$ means that there exists $B>0$ so that the expression is bounded by $B |y|$).
The first term above is linear in $(H,h)$ while the second and third terms correspond to higher order terms. Hence, we conclude that the derivative 
$DF_A(\varphi,c): \mbox{Lip}(\Omega) \times\mathbb{R}\to \mbox{Lip}(\Omega) \times\mathbb{R}$ is given by
\begin{align}\label{eq:derF}
DF_A(\varphi,c)(H,h)=\Big(\sum_{a=1}^d \,  e^{A(a\cdot) + \varphi(a\cdot)  - \varphi(\cdot) - c} \, \big( H(a\cdot)  - H(\cdot) - h \big ), \int  H \,d \nu_{ {A}}\Big).
\end{align}

\subsubsection{Proof of Theorem~\ref{Mddeste0}}

Let $\tilde{A}:\Omega \to \mathbb{R}$ be a fixed normalized
Lipschitz continuous potential and $\nu_{\tilde{A}}$ be a probability measure such that $\mathcal{L}_{\tilde A}^* \nu_{\tilde A}=\nu_{\tilde A}$.
By the assumption, the equation
\begin{equation} \label{atch43b} \sum_{a=1}^d e^{\tilde{A}(a x) + \tilde{\varphi}(a x) - \tilde{\varphi}(x) - \tilde{c}}=1,
\end{equation}
has a solution for the function $\tilde \varphi\equiv 0$ and the constant $\tilde c=0$ (in this case $\nu_{\tilde{A}}$ is $\sigma$-invariant). Equivalently
\begin{equation} \label{atch43c}  F_{\tilde A} (0,0)=(1,0).
\end{equation}

Our purpose is to use the formulation in ~\eqref{atch43c} in order to use the implicit function theorem to show that there exists an open neighborhood of the potential $\tilde A$ formed by normalizable potentials. 

\begin{proposition}
    \label{le:derat0}
$DF_{\tilde A}(0,0): \mbox{Lip}(\Omega) \times\mathbb{R}\to \mbox{Lip}(\Omega) \times\mathbb{R}$ is an isomorphism.
\end{proposition}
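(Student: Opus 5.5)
At $(\varphi,c)=(0,0)$ with $A=\tilde A$ normalized, formula \eqref{eq:derF} reduces to
$$DF_{\tilde A}(0,0)(H,h)=\Big(\mathcal L_{\tilde A}H - H - h,\ \int H\,d\nu_{\tilde A}\Big).$$
Here we use $\sum_a e^{\tilde A(ax)}=1$, so that $\sum_a e^{\tilde A(ax)}(H(ax)-H(x)-h)=\mathcal L_{\tilde A}H(x)-H(x)-h$. The plan is to show that this bounded linear operator on $\mbox{Lip}(\Omega)\times\mathbb R$ is injective and surjective. The open mapping theorem then gives a bounded inverse, so it is an isomorphism. The key input is the classical spectral gap of $\mathcal L_{\tilde A}$ on $\mbox{Lip}(\Omega)$ (Ruelle's theorem, \cite{PP,Bala}). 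For normalized $\tilde A$, the eigenvalue $1$ is simple with eigenfunction $1$ and dual eigenmeasure $\nu_{\tilde A}$. Hence $\mbox{Lip}(\Omega)=\mathbb R\cdot 1\oplus E_0$, where $E_0=\{H:\int H\,d\nu_{\tilde A}=0\}$, $\mathcal L_{\tilde A}$ preserves $E_0$, and its spectral radius on $E_0$ is strictly less than $1$.

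\emph{Injectivity.} Suppose $\mathcal L_{\tilde A}H-H-h=0$ and $\int H\,d\nu_{\tilde A}=0$. Integrate the first equation against $\nu_{\tilde A}$ and use $\mathcal L_{\tilde A}^*\nu_{\tilde A}=\nu_{\tilde A}$. This gives $\int H\,d\nu_{\tilde A}-\int H\,d\nu_{\tilde A}-h=0$, so $h=0$. Then $\mathcal L_{\tilde A}H=H$ with $H\in E_0$. Since $1$ is not in the spectrum of $\mathcal L_{\tilde A}|_{E_0}$, this forces $H=0$.

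\emph{Surjectivity.} Given $(G,g)\in\mbox{Lip}(\Omega)\times\mathbb R$, we must solve $\mathcal L_{\tilde A}H-H-h=G$ and $\int H\,d\nu_{\tilde A}=g$. Integrating the first equation against $\nu_{\tilde A}$ determines $h=-\int G\,d\nu_{\tilde A}$. Then $G+h\in E_0$. Because $\mathcal L_{\tilde A}-I$ is invertible on $E_0$, there is a unique $H_0\in E_0$ with $(\mathcal L_{\tilde A}-I)H_0=G+h$. Explicitly,
$$H_0=-\sum_{n\ge0}\mathcal L_{\tilde A}^n(G+h),$$
and this series converges in the Lipschitz norm by the exponential contraction on $E_0$. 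Finally, set $H=H_0+g$. Constants are fixed by $\mathcal L_{\tilde A}$, since $\mathcal L_{\tilde A}1=1$, so they are annihilated by $\mathcal L_{\tilde A}-I$. Therefore $H$ still solves the first equation and has integral $g$.

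\emph{Main obstacle.} The substantive step is the spectral gap on $E_0$ in the Lipschitz norm, which gives the uniform contraction $\|\mathcal L_{\tilde A}^n|_{E_0}\|_{\rm Lip}\le C\theta^n$ for some $\theta<1$. I would take this from the classical Ruelle–Perron–Frobenius theorem for Lipschitz potentials rather than reprove it. If needed, it can be derived from the Lasota–Yorke inequality $|\mathcal L_{\tilde A}^nH|_{\rm Lip}\le C2^{-n}|H|_{\rm Lip}+C\|H\|_{C^0}$ together with the uniform convergence $\mathcal L^n_{\tilde A}H\to\int H\,d\nu_{\tilde A}$. Boundedness of $DF_{\tilde A}(0,0)$ is clear, since $\mathcal L_{\tilde A}$ is bounded on $\mbox{Lip}(\Omega)$. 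It follows that the bijection has a continuous inverse, as claimed.
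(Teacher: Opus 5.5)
Your proof is correct and takes essentially the paper's route. You kill $h$ by integrating against the $\mathcal L_{\tilde A}^*$-fixed measure $\nu_{\tilde A}$, use simplicity of the eigenvalue $1$ for injectivity, and for surjectivity solve on the $\nu_{\tilde A}$-mean-zero part and then add a constant to fix $\int H\,d\nu_{\tilde A}$. The only difference is that the paper gets the mean-zero solution by writing $\mathcal L_{\tilde A}H-H=\mathcal L_{\tilde A}(H-H\circ\sigma)$ and citing from \cite{GKLM} that $\mathcal L_{\tilde A}$ maps coboundaries plus constants onto $\mathrm{Lip}(\Omega)$; you instead invert $I-\mathcal L_{\tilde A}$ on mean-zero Lipschitz functions directly via the Neumann series, which is the fact behind that citation and is the argument the paper itself uses in the proof of Theorem~\ref{t2}.
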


\begin{proof}
    Using that $1$ is a simple leading eigenvalue and $\mathcal L_{\tilde A} 1 =1$, one can write
\begin{align*}
\mbox{Ker}(DF_{\tilde A}(0,0))
	& =\Big\{(H,h)\in  \mbox{Lip}(\Omega) \times\mathbb{R}\colon
		\sum_{a=1}^d \,  e^{\tilde A(ax)} \, ( H(ax)  - H(x)) =  h , \int  H \,d \nu_{ \tilde{A}}=0 \Big\}.
\end{align*}

In particular, if $(H,h)\in \mbox{Ker}(DF_{\tilde A}(0,0))$ then 
$$
H=\mathcal L_{\tilde A} H  - h  \qquad\text{and}\qquad
\int H \, d\nu_{\tilde A} = \int \mathcal L_{\tilde A} H \,d\nu_{\tilde A} - h.
$$
As $\mathcal L_{\tilde A}^*\nu_{\tilde A}=\nu_{\tilde A}$ the second equation implies that $h=0$. Consequently, all elements in the kernel of $DF_{\tilde A}(0,0)$
are of the form $(H,0)$ where 
$\mathcal L_{\tilde A} H=H$ (hence $H$ is constant) and $\int H\, d\nu_{\tilde A}=0$.
This implies that $DF_{\tilde A}(0,0)$ is injective.

\smallskip
We proceed to show that $DF_{\tilde A}(0,0)$ is surjective. Fix an arbitrary $(\psi, b) \in \mbox{Lip}(\Omega) \times\mathbb{R}$.
By definition, the equation $DF_{\tilde A}(0,0)(H,h)=(\psi, b)$ admits a solution $(H,h)$ if and only if
\begin{equation} \label{atch43bfe}
\begin{cases}
\mathcal L_{\tilde A} (H- H\circ \sigma)   = \psi + h \smallskip \\
\int  H \,d \nu_{ \tilde{A}}  = b
\end{cases}
\end{equation}
Integrating with respect to $\nu_{\tilde A}$ and noticing that
 we have it is $\sigma$-invariant (because $\tilde A$ is normalized)
the first equation implies that  $h=-\int \psi\, d\nu_{\tilde A}$. 

We will now make use of the spectral theory for the transfer operator $\mathcal L_{\tilde A}$.  Denote by
\begin{equation}
    \label{eq:cobod}
    \mathcal{C}=\{ \phi - \phi \circ \sigma +  z \colon  \phi\in \mbox{Lip}(\Omega),\,\,z\, \in \mathbb R\}
\end{equation}
the subspace formed by coboundaries and set $\mathcal{C}_0=\{ \phi - \phi \circ \sigma \colon \phi\in \mbox{Lip}(\Omega)\}$.
It is known that the linear map $\mathcal L_{\tilde A}\mid_{\mathcal C}: \mathcal{C} \to \mbox{Lip}(\Omega)$ is onto
(cf. \cite[Proposition 3.3 item 4]{GKLM}).

We claim that the linear operator
$\mathfrak{V}: \mathcal C_0 \to \{\phi \in \mbox{Lip}(\Omega) \colon \int \phi\, d\nu_{\tilde A}=0\}$ given by
$$
\mathfrak{V}(H)=\mathcal L_{\tilde A} (H- H\circ \sigma)
$$
 is a bijection. As the injectivity follows as in the proof of the injectivity of $DF_{\tilde A}(0,0)$, it remains to prove the  surjectivity of $\mathfrak{V}$. In fact, by surjectivity of  ${\mathcal L}_{\tilde A}\mid_{\mathcal C}$, for any $\phi \in \mbox{Lip}(\Omega)$ so that $\int \phi\, d\nu_{\tilde A}=0$, there exists
$H\in \mbox{Lip}(\Omega)$ and $z\, \in \mathbb R$ so that
$$
\phi={\mathcal L}_{\tilde A}(H- H\circ \sigma +z)={\mathcal L}_{\tilde A}(H- H\circ \sigma) + z.
$$
Integrating with respect to the $\sigma$-invariant probability measure $\tilde \nu_A$ one obtains
$$
0=\int \phi\, d\nu_{\tilde A} = \int (H- H\circ \sigma)\, d{\mathcal L}_{\tilde A}^*\tilde \nu_{\tilde A} + z,
$$
hence $z=0$.
This shows that $\mathfrak{V}(H- H\circ \sigma)=\phi$, and proves the surjectivity of $\mathfrak{V}$. Therefore, the solution of
~\eqref{atch43bfe}
is obtained as the unique solution of the cohomological equation
\begin{equation*} \label{atch43bfeh}
H- H\circ \sigma  = \mathfrak{V}^{-1}(\psi + h)
\end{equation*}
which satisfies $\int  H \,d \nu_{ \tilde{A}}  = b$.
This finishes the proof of the proposition.
\end{proof}

We are now in a position to finish the proof of Theorem~\ref{Mddeste0}.
In fact, in view of Proposition~\ref{le:derat0}, the implicit function theorem (see e.g. \cite[page 9]{Lang} or \cite{An,Ros})
ensures that there exists an
open neighborhood $\mathcal U\subset \mbox{Lip}(\Omega)$ of $\tilde A$ and a differentiable map
$$
\mathcal U \ni A \mapsto (\varphi_A,c_A) \in \mbox{Lip}(\Omega) \times \mathbb R
$$
such that
\begin{equation*}
\sum_{a=1}^d e^{A(a x) + \varphi_A (a x) - \varphi_A(x) - c_ A}=1\qquad \text{for every $x\in \Omega$}
\end{equation*}
 and $ \int \varphi_A d \nu_{\tilde{A}}=0$. This is to say that $\tilde A$ admits an open neighborhood formed by normalizable potentials.
Finally, the limit $\lim_{A \to 0} (\varphi_A,c_A) =(0,0)\in \mbox{Lip}(\Omega)\times \mathbb R$ follows as an immediate consequence of the
differentiability result. This finishes the proof of the theorem. \hfill $\square$

\begin{remark}
The previous arguments make use of some known results for the classical transfer operators. Nevertheless, the arguments do not make use of iterates
of the transfer operators, which is one of the main obstructions to develop a non-extensive thermodynamic formalism.
For instance, in our proof we used the claim of Theorem 3.3 item (4) in \cite{GKLM}, which was obtained from properties of the operator
$(I- \mathcal{L}_A)^{-1}$, where $A$ is a H\"older normalized potential. In a general case, if this is true, the argument works. We believe that following this reasoning it is possible to derive pathological examples  where one gets $C^2$ differentiability but not analyticity.
Finally, one should notice that the arguments concerning the proof of the surjectivity resemble similar constructions appearing for perturbation
theory of leading simple eigenvalues
(cf. \cite{KLOE,Ros}).
\end{remark}

\subsection{The space of extensive normalizable potentials}

In this subsection we proceed with the proof of Theorem~\ref{Mddeste} in the context of the non-extensive thermodynamic formalism.
To each normalizable Lipschitz continuous potential $\tilde A$ one can associate the normalized potential
\begin{equation}\label{eq-nomrlB}
B(\tilde A)= \tilde{A} + \varphi_{\tilde{A}} - \varphi_{\tilde{A}}\circ \sigma - c_{\tilde{A}},
\end{equation}
 where $P(\tilde A)=c_{\tilde A}$ and  $e^{c_{\tilde A}}$ is the simple leading eigenvalue of the transfer operator $\mathcal L_{\tilde A}$

and $h_{\tilde A} =e^{\varphi_{\tilde{A}}}$ is a leading eigenfunction which, up to a multiplicative constant,  we may assume to satisfy $\int h_{\tilde A}\, d\nu_{\tilde A}=1$.
Moreover, one has that
both maps
$$
h : \tilde A \to h_{\tilde A}\in C^0(\Omega,\mathbb R)
\quad\text{and}\quad
c: \tilde A \to c_{\tilde A}\in \mathbb R
$$
defined for potentials in $\mbox{Lip}(\Omega)$
are $C^1$-smooth, and that, denoting by $E^0$ the space of mean zero observables with respect to 
$\nu_{\tilde A}$,
\begin{equation}\label{eq:derhA}
D h(\tilde A) H
= h_{\tilde A} \cdot \int \big[(I - \mathcal{L}_{\tilde A \;|_{E_{0}}})^{-1}(1 - h_{\tilde A}) \big] \cdot H  \; d\nu_{\tilde A},
\end{equation}
and
\begin{equation}\label{eq:dercA}
D c(\tilde A) H
= c_{\tilde A} \cdot \int  h_{\tilde A} \cdot H  \; d\nu_{\tilde A} = c_{\tilde A} \cdot \int  H  \; d\mu_{\tilde A}.
\end{equation}
for every $H\in \mbox{Lip}(\Omega)$ (cf.  \cite[Arxiv version]{BCV} or \cite{LR1}). We now deduce that the normalized potential varies smoothly with the original Lipschitz continuous potential.

\begin{corollary}
The map $B: \mbox{Lip}(\Omega) \to \mbox{Lip}(\Omega)$ given by \eqref{eq-nomrlB} is $C^1$-smooth
and 
\begin{align*}
DB(\tilde A) H
	&= H + \int \big[(I - \mathcal{L}_{\tilde A \;|_{E_{0}}})^{-1}(1 - h_{\tilde A}) \big] \cdot (H-H\circ\sigma)  \; d\nu_{\tilde A}
	 - c_{\tilde A} \cdot \int  h_{\tilde A} \cdot H  \; d\nu_{\tilde A}
\end{align*}
for every $H\in \mbox{Lip}(\Omega)$.
\end{corollary}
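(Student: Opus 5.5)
The plan is to read $B$ as a composition of maps already known to be $C^1$ and then apply the chain rule. Recall that $B(\tilde A)=\tilde A+\varphi_{\tilde A}-\varphi_{\tilde A}\circ\sigma-c_{\tilde A}$ with $\varphi_{\tilde A}=\log h_{\tilde A}$. The map $\tilde A\mapsto\tilde A$ is linear and bounded. The composition operator $\varphi\mapsto\varphi\circ\sigma$ is bounded and linear on $\mbox{Lip}(\Omega)$, since $\sigma$ is Lipschitz with constant $2$ for the usual metric. The map $\tilde A\mapsto c_{\tilde A}$ is $C^1$ by the cited results. The one remaining piece is $\tilde A\mapsto \varphi_{\tilde A}=\log h_{\tilde A}$.

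\textbf{Step 1: regularity of $\log h$.} I will first upgrade the smoothness of $h$ from $C^0$ to $\mbox{Lip}(\Omega)$-valued. This is part of \cite{BCV,LR1}. Alternatively, it follows from Theorem~\ref{Mddeste0} with the implicit function theorem in $\mbox{Lip}(\Omega)$, applied locally after normalizing. Next I use that $h_{\tilde A}$ is bounded away from zero, locally uniformly in $\tilde A$. On the open set of Lipschitz functions bounded below by a positive constant, the Nemytskii map $g\mapsto\log g$ is $C^1$ into $\mbox{Lip}(\Omega)$, with derivative $k\mapsto k/g$. Hence $\tilde A\mapsto\varphi_{\tilde A}$ is $C^1$ and
\[
D\varphi(\tilde A)H=\frac{Dh(\tilde A)H}{h_{\tilde A}}=\int \big[(I-\mathcal L_{\tilde A\,|_{E_0}})^{-1}(1-h_{\tilde A})\big]\, H\, d\nu_{\tilde A},
\]
using \eqref{eq:derhA}. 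I expect this step to be the main obstacle, because the cited derivative formula is stated for $C^0$-valued $h$. If the Lipschitz-valued version is not available directly, I would first try to recover it from Theorem~\ref{Mddeste0}.

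\textbf{Step 2: chain rule.} With Step 1 in hand, the chain rule gives
\[
DB(\tilde A)H=H+D\varphi(\tilde A)H-(D\varphi(\tilde A)H)\circ\sigma-Dc(\tilde A)H .
\]
I then substitute \eqref{eq:dercA} for the last term.

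\textbf{Step 3: matching the stated formula.} Since $D\varphi(\tilde A)H$ is a scalar multiple of the constant function, its coboundary part has to be reconciled with the term written in the statement. To do this I expand and regroup the integrals using linearity in $H$. This yields the term $\int[(I-\mathcal L_{\tilde A|_{E_0}})^{-1}(1-h_{\tilde A})]\,(H-H\circ\sigma)\,d\nu_{\tilde A}$. The remaining term $-c_{\tilde A}\int h_{\tilde A}H\,d\nu_{\tilde A}$ is exactly $-Dc(\tilde A)H$ by \eqref{eq:dercA}.

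The expression obtained this way is continuous in $\tilde A$, because $(I-\mathcal L_{\tilde A})^{-1}$ on $E_0$, $h_{\tilde A}$, $c_{\tilde A}$ and $\nu_{\tilde A}$ all vary continuously by spectral perturbation. Together with Step 2 this establishes that $B$ is $C^1$.
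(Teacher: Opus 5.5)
Your Steps 1--2 follow the paper's route (chain rule through $\varphi_{\tilde A}=\log h_{\tilde A}$, plus \eqref{eq:dercA}). Your Step 1 is more careful than the paper in requiring $h$ to be $C^1$ as a $\mbox{Lip}(\Omega)$-valued map, and that part is fine for the $C^1$ claim. The gap is Step 3. The chain rule produces $D\varphi(\tilde A)H-(D\varphi(\tilde A)H)\circ\sigma$. By \eqref{eq:derhphiA}, $D\varphi(\tilde A)H$ is the constant $\kappa(H)=\int g\,H\,d\nu_{\tilde A}$, where $g=(I-\mathcal{L}_{\tilde A}|_{E_0})^{-1}(1-h_{\tilde A})$, and a constant minus its composition with $\sigma$ is $0$. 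The term in the statement is instead $\kappa(H)-\kappa(H\circ\sigma)=D\varphi(\tilde A)(H-H\circ\sigma)$, in which the direction is composed with $\sigma$ before differentiating. Linearity in $H$ does not turn $(D\varphi(\tilde A)H)\circ\sigma$ into $D\varphi(\tilde A)(H\circ\sigma)$, and the two really differ. Since $\mathcal{L}_{\tilde A}(g\cdot H\circ\sigma)=H\,\mathcal{L}_{\tilde A}g$ and $\mathcal{L}_{\tilde A}^*\nu_{\tilde A}=e^{c_{\tilde A}}\nu_{\tilde A}$, one gets $\int g\,(H-H\circ\sigma)\,d\nu_{\tilde A}=\int H\,(g-e^{-c_{\tilde A}}\mathcal{L}_{\tilde A}g)\,d\nu_{\tilde A}$, which has no reason to vanish. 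So your Steps 1--2 actually yield $DB(\tilde A)H=H-c_{\tilde A}\int h_{\tilde A}H\,d\nu_{\tilde A}$, not the displayed formula. The paper's proof has the same hole: it passes from \eqref{eq:derhphiA} to the statement with the words ``yields the formula''.

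No regrouping can close this, because the displayed formula contradicts a structural property of $B$. For every Lipschitz $f$ and $t\in\mathbb R$ one has $B(\tilde A+f-f\circ\sigma+t)=B(\tilde A)$, since the eigenfunction becomes a constant multiple of $e^{-f}h_{\tilde A}$ and $c$ shifts by $t$. Hence $DB(\tilde A)$ must annihilate every $f-f\circ\sigma+t$. In the displayed formula every term other than $H$ is a real number. It would therefore give $DB(\tilde A)(f-f\circ\sigma)=f-f\circ\sigma+\mathrm{const}\neq 0$ for any non-constant coboundary. It would also give $DB(\tilde A)1=1-c_{\tilde A}$, which is nonzero unless $P(\tilde A)=1$; your chain-rule output gives the same value.

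The trouble lies in the imported inputs. The derivative of $c=P$ is $Dc(\tilde A)H=\int H\,d\mu_{\tilde A}$. Formula \eqref{eq:dercA} looks like the eigenvalue derivative $e^{c_{\tilde A}}\int H\,d\mu_{\tilde A}$ with $e^{c_{\tilde A}}$ replaced by $c_{\tilde A}$. Formula \eqref{eq:derhA} cannot be the derivative of the eigenfunction. Differentiating $\mathcal{L}_Ah_A=e^{c_A}h_A$ gives, for $\dot h=Dh(\tilde A)H$, the equation $(e^{c_{\tilde A}}-\mathcal{L}_{\tilde A})\dot h=\mathcal{L}_{\tilde A}(Hh_{\tilde A})-e^{c_{\tilde A}}\big(\int H\,d\mu_{\tilde A}\big)h_{\tilde A}$. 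Its right-hand side is nonzero for generic $H$, so $\dot h$ is not a multiple of $h_{\tilde A}$.

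With correct inputs, your Steps 1--2 do prove that $B$ is $C^1$, with $DB(\tilde A)H=H+\dot\varphi-\dot\varphi\circ\sigma-\int H\,d\mu_{\tilde A}$, where $\dot\varphi=\dot h/h_{\tilde A}$ is in general a non-constant function. The formula as displayed cannot be proved.
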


\begin{proof}
It is immediate from ~\eqref{eq:derhA} and the chain rule that, for each $H\in \mbox{Lip}(\Omega)$,
\begin{equation}\label{eq:derhphiA}
D \varphi(\tilde A) H
	=D (\log \circ h)(\tilde A) H
	= \frac1{h(\tilde A)} D h(\tilde A) H
= \int \big[(I - \mathcal{L}_{\tilde A \;|_{E_{0}}})^{-1}(1 - h_{\tilde A}) \big] \cdot H  \; d\nu_{\tilde A}
\end{equation}
This, together with ~\eqref{eq:dercA}, implies that $B$ is $C^1$-smooth and yields the formula for the derivative of $B$.
\end{proof}

\medskip

 The next theorem offers a generalization of Theorem~\ref{Mddeste0}, by establishing a solution for Bowen's equation for general normalizable potentials. 

\begin{theorem} \label{t2} \,Fix $0<q<1$ and assume that $\tilde A$ is normalizable 
There exists an
open neighborhood $\mathcal U\subset \mbox{Lip}(\Omega)$ of $\tilde A$ 
such that for every Lipschitz continuous potential $A \in \mathcal U $ 
there exists a solution $(\varphi_A,\, c_A)$ such that
\begin{equation} \label{atch467} \sum_{a=1}^d e^{A(a x) + \varphi_A (a x) - \varphi_A(x) - c_A}=1, \qquad \text{for all $x\in \Omega$}
\end{equation}
and $ \int \varphi_A d \mu_{\tilde{A}}=0$. 
Moreover, $\mathcal U\ni A\mapsto (\varphi_A,c_A)$ is $C^1$-smooth.
\end{theorem}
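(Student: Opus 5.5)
We plan to reduce Theorem~\ref{t2} to the warm-up Theorem~\ref{Mddeste0}, applied at the normalized potential $B(\tilde A)$ of \eqref{eq-nomrlB}, and then to translate the conclusion back to $A$ by a cohomological shift. Although the statement carries the parameter $q$, equation \eqref{atch467} involves only the usual exponential. Hence the argument is purely extensive and $q$ plays no role.

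First, set $\hat A = B(\tilde A)=\tilde A+\varphi_{\tilde A}-\varphi_{\tilde A}\circ\sigma-c_{\tilde A}$. This is a normalized Lipschitz potential, since $\sum_a e^{\hat A(ax)}=1$. Its conformal measure satisfies $\nu_{\hat A}=\mu_{\tilde A}$, because for a normalized potential the fixed point of $\mathcal L_{\hat A}^*$ is the equilibrium state, and the equilibrium state is unchanged under adding coboundaries and constants.

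Next, consider the affine translation map $T(A)=A+\varphi_{\tilde A}-\varphi_{\tilde A}\circ\sigma-c_{\tilde A}$. It is an isometry of $\mbox{Lip}(\Omega)$ sending $\tilde A$ to $\hat A$. Theorem~\ref{Mddeste0} gives a neighborhood $\hat{\mathcal U}$ of $\hat A$ and a differentiable map $\hat{\mathcal U}\ni D\mapsto(\varphi_D,c_D)$ solving the normalization equation with $\int\varphi_D\,d\nu_{\hat A}=0$. We take $\mathcal U=T^{-1}(\hat{\mathcal U})$. For $A\in\mathcal U$ we put $D=T(A)$, and we define
\[
\varphi_A=\varphi_{\tilde A}+\varphi_{D}-\textstyle\int\varphi_{\tilde A}\,d\mu_{\tilde A},\qquad c_A=c_{\tilde A}+c_{D}.
\]
The exponent then reads $A(ax)+\varphi_A(ax)-\varphi_A(x)-c_A=D(ax)+\varphi_D(ax)-\varphi_D(x)-c_D$, and the additive constant cancels in the difference. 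This gives \eqref{atch467}. Because $\nu_{\hat A}=\mu_{\tilde A}$, we also get $\int\varphi_A\,d\mu_{\tilde A}=0$. Smoothness of $A\mapsto(\varphi_A,c_A)$ follows since it is the $C^1$ map of Theorem~\ref{Mddeste0} composed with the affine map $T$, plus constants.

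The main obstacle is to justify the two inputs used from Theorem~\ref{Mddeste0}. The first is the identification $\nu_{\hat A}=\mu_{\tilde A}$. The second is that the map in Theorem~\ref{Mddeste0} is indeed $C^1$ and not merely differentiable. For this we would check that $F$ in \eqref{try} is $C^1$ on $\mbox{Lip}(\Omega)\times\mbox{Lip}(\Omega)\times\mathbb R$ (indeed analytic, as a composition of exponentials with bounded linear maps). We would also note that in \eqref{try} the measure must be frozen as $\nu_{\hat A}$ rather than $\nu_A$. With both points in place, the implicit function theorem yields a $C^1$ solution map.
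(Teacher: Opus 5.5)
Your proposal is correct and is essentially the paper's argument. The paper applies the implicit function theorem at $(\tilde A,0,0)$ to the map obtained by inserting the fixed shift $\varphi_{\tilde A}(a\cdot)-\varphi_{\tilde A}(\cdot)-c_{\tilde A}$ into the exponent, which is exactly the map of Theorem~\ref{Mddeste0} precomposed with your translation $T$, with the measure frozen at $\mu_{\tilde A}$; it checks directly that $(v,\alpha)\mapsto(\mathcal L_B v-v-\alpha,\int v\,d\mu_{\tilde A})$ is invertible and then shifts back by $(\varphi_{\tilde A},c_{\tilde A})$. Your subtraction of $\int\varphi_{\tilde A}\,d\mu_{\tilde A}$ is genuinely needed: the paper's choice $\varphi_A=\varphi(A)+\varphi_{\tilde A}$ has $\mu_{\tilde A}$-mean zero only if $\varphi_{\tilde A}$ is pre-normalized that way.
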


\begin{proof}

If $\tilde A$ is normalized the result follows from Theorem~\ref{Mddeste0} and
there is nothing to prove. Hence we assume that $\tilde A$ is not normalized.
Let $\varphi_{\tilde A}$ and $c_{\tilde A}$ be such that the potential
$ 
B=\tilde A+\varphi_{\tilde A}-\varphi_{\tilde A}\circ\sigma-c_{\tilde A}
$ 
is normalized, hence 
$\sum_{a=1}^d e^{B(ax)}=1$ for all $x\in\Omega$.
Consider the analytic map
$ 
F:\mathrm{Lip}(\Omega)\times\mathrm{Lip}(\Omega)\times\mathbb R
\longrightarrow
\mathrm{Lip}(\Omega)\times\mathbb R
$ 
defined by
\[
F(A,\varphi,c)(x)
=
\left(
\sum_{a=1}^d
e^{A(ax)+[\varphi_{\tilde A}(ax)-\varphi_{\tilde A}(x)-c_{\tilde A}]
+\varphi(ax)-\varphi(x)-c},
\;
\int \varphi\, d\mu_{\tilde A}
\right)
\]
and observe that
$F(\tilde A,0,0)=(1,0).$

In order to prove Theorem \ref{t2} we proceed to verify the assumptions of the implicit function theorem.
In order to do so, first we compute the derivative of $F$ with respect to $(\varphi,c)$ at
$(\tilde A,0,0)$. Note that 
\begin{align*}
  \sum_{a=1}^d & e^{\tilde{A}(a x) +[  \varphi_{\tilde{A}} (a x) -  \varphi_{\tilde{A}} (x) -c_{\tilde{A}}] + v(a x) - v(x) - (c+\alpha)} -   \sum_{a=1}^d e^{\tilde{A}(a x)  +[  \varphi_{\tilde{A}} (a x) -  \varphi_{\tilde{A}} (x) -c_{\tilde{A}}] - c} \\
    & =\sum_{a=1}^d e^{(\tilde{A}(a x)+[  \varphi_{\tilde{A}} (a x) -  \varphi_{\tilde{A}} (x) -c_{\tilde{A}}]  - c)}  (e^{  v(a x) - v(x) -\alpha}-1  ) \\
    &=  \,\sum_{a=1}^d e^{B(a x)-c}  (e^{  v(a x) - v(x) -\alpha}-1)
\end{align*}
for every $v\in \text{Lip}(\Omega)$ and $\alpha\in \mathbb R$. In particular, taking the Taylor series for the exponential map one concludes that, for $(v,\alpha)\in\mathrm{Lip}(\Omega)\times\mathbb R$,
\[
\begin{aligned}
    D_{(\varphi,c)}F(\tilde A,0,0)(v,\alpha)(x)
& =
\left(
\sum_{a=1}^d
e^{B(ax)}\big(v(ax)-v(x)-\alpha\big),
\;
\int v\, d\mu_{\tilde A}
\right)\\
& = \left(\mathcal L_B v(x)-v(x)-\alpha, \int v\, d\mu_{\tilde A} \right),
\end{aligned}
\]
where $\mathcal L_B$ stands for the Ruelle operator associated with the normalized
potential $B$.
We claim that the latter is an isomorphism from
$\mathrm{Lip}(\Omega)\times\mathbb R$ onto
$\mathrm{Lip}(\Omega)\times\mathbb R$.
In fact, given $(f,\beta)\in\mathrm{Lip}(\Omega)\times\mathbb R$ we aim to prove that has a unique solution $(v,\alpha)$.
\begin{equation}
    \label{eq:auxili}
\mathcal L_B v-v-\alpha=f,
\qquad
\int v\,d\mu_{\tilde A}=\beta.
\end{equation}
Using that $\mathcal L_B^*\mu_{\tilde A}=\mu_{\tilde A}$ and integrating the first equation above with respect to $\mu_{\tilde A}$ one deduces that 
$\alpha=-\int f\, d\mu_{\tilde A}.$ Thus, the first equation in ~\eqref{eq:auxili} becomes
\[
(I-\mathcal L_B)v
=
-\Big(f-\int f\, d\mu_{\tilde A}\Big)
\]
is a $\mu_{\tilde A}$-mean zero observable.  
Since $B$ is normalized, the operator
$I-\mathcal L_B$ is invertible on the subspace of Lipschitz functions with 
$\mu_{\tilde A}$-mean zero, and so  there exists a unique solution $u\in \text{Lip}(\Omega)$ of the previous equation with
$\int u\, d\mu_{\tilde A}=0$.
Then, the pair $(v,\alpha)$ with 
$v=u+\beta$ and $\alpha=-\int f\, d\mu_{\tilde A}$ is the only solution of ~\eqref{eq:auxili}.
This proves that
$D_{(\varphi,c)}F(\tilde A,0,0)$ is bijective.

Hence, by the implicit function theorem,  there exist an open neighborhood $\mathcal U$ of $\tilde A$ and a
$C^1$ map
$A\in\mathcal U\mapsto (\varphi(A),c(A))$
such that
$F(A,\varphi(A),c(A))=(1,0)$.
Finally, defining
\[
\varphi_A=\varphi(A)+\varphi_{\tilde A},
\quad \text{and}\quad
c_A=c(A)+c_{\tilde A},
\]
we obtain a mean zero solution for ~\eqref{atch467} as desired. This concludes the proof.
\end{proof}

Denote by $\mathfrak{N}$ the set of normalizable Lipschitz continuous potentials.
Building over the previous results one can prove the following:

\begin{theorem} 
Every Lipschitz continuous potential $A: \Omega\to\mathbb R$ is normalizable.
\end{theorem}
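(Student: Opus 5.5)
The statement concerns the extensive case ($q=1$), where ``normalizable'' means the existence of $(\varphi_A,c_A)$ solving \eqref{atchr27} with the usual exponential. The natural strategy, suggested by the sentence ``building over the previous results'', is a connectedness (continuation) argument in the Banach space $\mbox{Lip}(\Omega)$. We show that the set $\mathfrak{N}$ of normalizable potentials is nonempty, open and closed. Since $\mbox{Lip}(\Omega)$ is connected (it is a vector space), this gives $\mathfrak{N}=\mbox{Lip}(\Omega)$.

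Nonemptiness is immediate: $A=0$ is normalizable with $\varphi=0$ and $c=\log d$. Openness is exactly Theorem~\ref{t2}, which is in fact the $q=1$ argument, using the exponential. Given $\tilde A\in\mathfrak N$, Theorem~\ref{t2} provides a neighborhood $\mathcal U$ of $\tilde A$ on which \eqref{atch467} is solvable. So $\mathfrak N$ is open.

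The main work is closedness. Take $A_n\in\mathfrak N$ with $A_n\to A$ in $\mbox{Lip}(\Omega)$, and normalize the solutions by $\min\varphi_{A_n}=0$ (we may add constants to $\varphi$). The steps are as follows.

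First, get a uniform bound on $c_{A_n}$. Evaluating \eqref{atch467} at points where $\varphi_{A_n}$ attains its maximum and minimum gives $\log d+\min A_n\le c_{A_n}\le \log d+\max A_n$, and $\|A_n\|_{C^0}$ is bounded.

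Second, get a uniform Lipschitz bound on $\varphi_{A_n}$. Iterating the normalized relation, $e^{\varphi(x)}=\sum_{\sigma^k y=x}e^{S_kA(y)-kc}e^{\varphi(y)}$. Pairing preimages of $x$ and $x'$ along the same inverse branches and using the bounded distortion estimate \eqref{quet} together with the contraction of inverse branches, we obtain
$$|\varphi(x)-\varphi(x')|\le \tfrac{|A|_{\rm Lip}}{1-\theta}\,\mathrm{dist}(x,x')+\theta^k|\varphi|_{\rm Lip}\,\mathrm{dist}(x,x').$$
Letting $k\to\infty$ yields $|\varphi_{A_n}|_{\rm Lip}\le C|A_n|_{\rm Lip}$. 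Together with $\min\varphi_{A_n}=0$ and $\mathrm{diam}\,\Omega=1$, this bounds $\|\varphi_{A_n}\|_{\rm Lip}$ uniformly.

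Third, pass to the limit. By Arzel\`a--Ascoli, a subsequence satisfies $\varphi_{A_n}\to\varphi$ uniformly, with $\varphi$ Lipschitz, and $c_{A_n}\to c$. Passing to the limit in the equation, which is continuous under uniform convergence, gives \eqref{atch467} for $A$. Hence $A\in\mathfrak N$.

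I expect the second step to be the main obstacle, because it is where the a priori Lipschitz control of the eigenfunction must be extracted without spectral information. A cleaner alternative, if the iteration argument gets messy, is to bypass continuation entirely: for a given $A$ apply Theorem~\ref{t2} along the segment $tA$, $t\in[0,1]$. The set of $t$ for which $tA\in\mathfrak N$ is open by Theorem~\ref{t2}, closed by the compactness argument above, and contains $t=0$, so it is all of $[0,1]$. This is the same argument restricted to a path, and it needs only the uniform estimates along a compact family of potentials.
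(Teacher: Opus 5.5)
Your argument is correct. It shares the paper's global architecture: $\mathfrak N$ is nonempty, open by Theorem~\ref{t2}, and closed, hence equal to the connected space $\mbox{Lip}(\Omega)$. Your closedness step, however, is genuinely different.

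The paper passes to the limit through involution kernels. It represents the eigenfunction of $\mathcal L_{A_n}$ as $\int e^{W_n(\cdot,y)}\,d\nu_{A_n^*}(y)$, extracts $W_n\to W$ by Arzel\`a--Ascoli (using cited Lipschitz regularity of involution kernels), and takes $\nu_{A_n^*}\to\nu$ weak$^*$. You instead prove explicit a priori bounds on the log-eigenfunctions themselves, namely $\log d+\min A\le c_A\le \log d+\max A$ and $|\varphi_A|_{\rm Lip}\le |A|_{\rm Lip}/(1-\theta)$. You then apply Arzel\`a--Ascoli directly to $(\varphi_{A_n},c_{A_n})$.

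The step you feared is fine. It is the standard bounded-distortion estimate, and the term $\theta^k|\varphi|_{\rm Lip}$ vanishes as $k\to\infty$ because $\varphi$ is Lipschitz by the definition of normalizability. Mere continuity would also suffice.

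Your route is more elementary and self-contained. It needs no dual potentials and no uniform-in-$n$ regularity of the kernels $W_n$, which the paper cites but does not quantify. Your bound on $c_{A_n}$ also supplies the (subsequential) convergence of eigenvalues. The paper glosses over this when it asserts that the limit eigenvalue is $1$, even though the $A_n$ are not normalized. What the paper's route offers instead is structural: the limit eigenfunction comes with an involution-kernel representation in terms of a dual eigenmeasure.

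Two caveats apply equally to both arguments. First, your closedness step iterates the transfer operator, which is precisely the tool the paper says is unavailable for $q\neq 1$. The involution-kernel representation also relies on the multiplicativity of $\exp$, so neither closedness argument transfers to the non-extensive setting as it stands. Second, openness via Theorem~\ref{t2} already uses the spectral gap of $\mathcal L_B$ for normalized $B$. The continuation argument therefore re-derives Ruelle's theorem rather than proving it independently.
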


\begin{proof}   The space 
$\mathfrak{N}$ is an open subset of $\text{Lip}(\Omega)$ as a consequence of Theorem~\ref{t2}. In order to prove the theorem it is enough to prove that $\mathfrak{N}$ is closed. 

Let $(A_n)_{n\ge 1}$ be a sequence in $\mathfrak{N}$ converging to $A\in \text{Lip}(\Omega)$ in the Lipschitz norm (recall ~\eqref{eq:Lipnorm}). 
We will show that $A\in \mathfrak{N}$, that is,
 that $\mathcal{L}_A$ has a leading eigenvalue and eigenfunction.
Denote by $W_n: \Omega \times \Omega  \to \mathbb{R}$, $n \ge 1$, an involution kernel
for $A_n$, with the normalization $W_n ( ...1,1,1|1,1,1...)=1$. 
(we refer the reader to \cite{BLT,LOS,LOT,LM1} for the existence of  involution kernels).
In particular, the dual potential $A_n^*:\Omega \to \mathbb R$, depending on negative coordinates of the two-sided shift, is defined by the equation
\begin{equation}
    \label{eq:invK}
    A_n^*(y) = A_n(x)+ W_n(\hat\sigma (y\mid x)) - W_n(y\mid x)
\end{equation}
where $(x|y) \in \Omega \times \Omega$ and $\hat \sigma$ stands for the two sided shift on $\Omega \times \Omega$.

For each $n\ge 1$, denote by 
$\nu_{A_n^*}$ the leading eigenmeasure for the potential $A_n^*$.
 It is known 
that the involution kernel   $W_n$ is Lipchitz in both variables  \cite[Proposition 7 in  Section 5]{LOT}, that 
the dual potential depends Lipchitz continuously on the potential
\cite[Section 3]{CLO} 
and that 
$$
 \varphi_n (x) = \int e^{W_n(x,y)} d \nu_{A_n^*}(y)
 $$
 is an eigenfunction for the Ruelle operator
 $\mathcal{L}_{A_n}$.
Up to extract some subsequence, we may assume without loss of generality that  $(\nu_{A_n^*})_{n\ge 1}$ is weak$^*$ convergent to $\nu$. 
Moreover, let $W$ denote the limit of the corresponding 
involution kernels $W_n$,
guaranteed by the Arz\`ela-Ascoli theorem. 
Then, one concludes that 
$$
\varphi = \lim_{n\to\infty} \varphi_n = \lim_{n\to\infty} \int e^{W_n(\cdot,y)} d \nu_n (y)
= 
\int e^{W(\cdot,y)} d \nu (y)
$$
is an eigenfunction for the Ruelle operator $\mathcal{L}_A$ associated to the leading eigenvalue $1$.
This proves that $\mathfrak{N}$ is a closed set and completes the proof of the theorem.
\end{proof}

\subsection{Solution of Bowen-type equations for non-extensive transfer operators} \label{p.thmB}

Fix $0<q<1$. 
Given a Lipschitz continuous potential $A:\Omega \to \mathbb{R}$ recall it is $q$-normalizable (resp. $q$-normalized) if there exist a Lipschitz continuous function $\varphi_A$ and $c_{A}$,
 such that, 
\begin{equation} \label{atch229} \sum_{a=1}^2 e_q^{A(a x) + \varphi_A (a x) - \varphi_A(x) - c_{A}}=1 \; \; 
\Big( \text{resp.} \;\;  \sum_{a=1}^2       (1 + (1 - q) {A}(a x) )^{1/(1 - q)}\,=\, \sum_{a=1}^2 e_q^{{A}(a x) }=1\Big)
\end{equation}
and all summands are strictly  positive for all $x\in \Omega$.
The potential $A\equiv 0$ is $q$-normalizable for all $0<q<1$ as  $\varphi_A=0 $ and   $c_{A}= \log_{2-q} (2) = \frac{-1 + 2^{-1 +q} }{q-1 }$ are solutions for the equation \eqref{atch229}, and so $A \equiv \log_{2-q} (2)$ is $q$-normalized.

\begin{theorem} \label{ddestemu}  Given $0<q<1$ and $\tilde A\in \mathfrak N_q$ there exists an open neighborhood $\mathcal U$ of $\tilde A$ in $\text{Lip}(\Omega)$ such that for  
every $A\in \mathcal U$ there exists 
a probability measure $\nu_{\tilde A}$, a Lipschitz continuous function 
$\varphi_A$ and $c_A \in \mathbb R$ such that 
\begin{equation*}  \sum_{a=1}^d e_q^{A(a x) + \varphi_A (a x) - \varphi_A(x) - c_ A}=1 \qquad \text{for all $x\in \Omega$,}
\end{equation*}
and $ \int \varphi_A d \mu_{\tilde{A}}=0$. Moreover, $\varphi_A,c_A$ depend in a differentiable way on $A$.
\end{theorem}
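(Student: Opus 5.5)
The argument follows the scheme of Theorem~\ref{t2}: an implicit function theorem argument on $\mbox{Lip}(\Omega)\times\mathbb R$, where the exponential is now replaced by the $q$-exponential. Since $\tilde A\in\mathfrak N_q$, there is a pair $(\varphi_{\tilde A},c_{\tilde A})$ with
\[
\sum_a e_q^{\tilde B(ax)}=1,\qquad \tilde B=\tilde A+\varphi_{\tilde A}-\varphi_{\tilde A}\circ\sigma-c_{\tilde A},
\]
and all summands are strictly positive. Hence $J:=e_q^{\tilde B}$ is a Lipschitz Jacobian. We let $\mu_{\tilde A}=\nu_{\tilde A}$ be the $\sigma$-invariant probability with $\mathcal L_{\log J}^*\mu_{\tilde A}=\mu_{\tilde A}$.

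Define
\[
F(A,\varphi,c)=\Big(\sum_{a=1}^d e_q^{A(a\cdot)+\varphi_{\tilde A}(a\cdot)-\varphi_{\tilde A}(\cdot)-c_{\tilde A}+\varphi(a\cdot)-\varphi(\cdot)-c},\ \int\varphi\,d\mu_{\tilde A}\Big).
\]
This is well defined and smooth (indeed analytic) on a neighbourhood of $(\tilde A,0,0)$. The reason is that $u\mapsto e_q^u=(1+(1-q)u)^{1/(1-q)}$ is analytic on $\{u>1/(q-1)\}$, and by compactness the exponent $\tilde B$ stays uniformly inside that region. Consequently, small Lipschitz perturbations keep it there, and the Nemytskii operator $u\mapsto e_q^u$ is $C^1$ on Lip functions. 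We have $F(\tilde A,0,0)=(1,0)$.

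Using $\frac{d}{du}e_q^u=(e_q^u)^q$, the partial derivative is
\[
D_{(\varphi,c)}F(\tilde A,0,0)(v,\alpha)=\Big(\sum_a J(ax)^q\big(v(ax)-v(x)-\alpha\big),\ \int v\,d\mu_{\tilde A}\Big).
\]
Set $g(x)=\sum_a J(ax)^q$. For $q<1$ we have $g>1$, so this is not the normalized operator, unlike the case $q=1$. Write $K=\mathcal L_{q\log J}$. The first component then becomes $Kv-g\,(v+\alpha)$, and invertibility amounts to solving
\[
Kv-gv-\alpha g=f,\qquad \int v\,d\mu_{\tilde A}=\beta.
\]
I would divide by $g$ and set $P=g^{-1}K$. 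This is a positive operator with $P1=1$, namely the Ruelle operator of the normalized potential $q\log J(a x)-\log g(x)$ in the appropriate sense; equivalently, it is a Markov transition operator with Lipschitz weights $J(ax)^q/g(x)$. Ruelle's theorem applied to the normalized potential $\log(J^q/g\circ\sigma)$ gives a spectral gap for $P$ on Lipschitz functions: $1$ is a simple eigenvalue, and there is an invariant probability $m$ with $P^*m=m$. The equation becomes $(P-I)v=f/g+\alpha$.

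Solvability forces
\[
\alpha=-\int f/g\,dm.
\]
With this choice, $(I-P)$ is invertible on the $m$-mean-zero Lipschitz functions, so $v$ is determined up to an additive constant. That constant is fixed by the condition $\int v\,d\mu_{\tilde A}=\beta$. Injectivity follows by the same argument, using $\ker(P-I)=$ constants. Hence the derivative is an isomorphism.

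The implicit function theorem (as in the proof of Theorem~\ref{Mddeste0}) then gives a neighbourhood $\mathcal U$ and a $C^1$ map $A\mapsto(\varphi(A),c(A))$ with $F=(1,0)$. Setting $\varphi_A=\varphi(A)+\varphi_{\tilde A}$ and $c_A=c(A)+c_{\tilde A}$ yields the equation. The summands are positive after shrinking $\mathcal U$, by continuity.

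The main obstacle is the invertibility step, because of the non-normalized weight $g\neq1$. The fix is to renormalize by $g$ and to verify that the resulting potential is Lipschitz, which holds since $J$ is bounded away from zero, so that Ruelle's spectral gap applies.

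A minor bookkeeping issue concerns the normalization: in the case $\varphi_{\tilde A}\neq 0$, the condition $\int\varphi_A\,d\mu_{\tilde A}=0$ holds only after subtracting $\int\varphi_{\tilde A}\,d\mu_{\tilde A}$. This is harmless, since adding a constant to $\varphi$ does not change the equation.
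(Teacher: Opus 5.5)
Your argument is correct. It follows the same implicit-function-theorem scheme as the paper, but you handle the key invertibility step differently and more carefully.

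The paper treats only a normalized $\tilde A$. Its second component is $\int(\varphi-\varphi\circ\sigma-c)\,d\mu_{\tilde A}$, where $\mu_{\tilde A}$ is the equilibrium state of $q\tilde A_q=q\log e_q^{\tilde A}$. It writes the derivative as $(H,h)\mapsto(\mathcal L_{q\tilde A_q}(H-H\circ\sigma-h),\,-h)$ and asserts invertibility ``by the same argument as in Proposition~\ref{le:derat0}''. That argument relied on $\mathcal L1=1$ and on an $\mathcal L^*$-fixed measure. Neither is available here, since $\mathcal L_{q\tilde A_q}1=\sum_a J(a\cdot)^q=g>1$. Moreover, by $\sigma$-invariance the paper's second component reduces to $-c$. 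Hence $(H,h)=(1,0)$ lies in the kernel of its derivative, and the normalization $\int\varphi_A\,d\mu_{\tilde A}=0$ is never enforced.

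Your version fixes both points:
\begin{itemize}
\item Taking $\int\varphi\,d\mu_{\tilde A}$ as the second component removes the constants from the kernel.
\item Dividing by $g$ reduces the problem to the normalized operator $P=g^{-1}\mathcal L_{q\log J}$. Its potential $q\log J-\log g\circ\sigma$ is Lipschitz, so Ruelle's spectral gap applies. This yields the correct solvability condition $\alpha=-\int (f/g)\,dm$, taken with respect to the $P^*$-invariant measure $m$ rather than $\mu_{\tilde A}$. The condition $\int v\,d\mu_{\tilde A}=\beta$ then pins down $v$ uniquely.
\end{itemize}

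You also treat a general $\tilde A\in\mathfrak N_q$ directly, by shifting with $(\varphi_{\tilde A},c_{\tilde A})$ and applying the harmless constant correction you point out. The paper only sketches that case.
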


\begin{proof}
Let us prove the result in the case $A\in \mathfrak N_q$ is normalized  (the proof on the general case can be adapted in a natural way, by defining a similar map as in ~\eqref{atch467} with the exponential terms replaced by $q$-exponential terms).
Consider the analytic map $F:\text{Lip}(\Omega) \times \text{Lip}(\Omega) \times  \mathbb{R}\to \text{Lip}(\Omega)\times\mathbb{R} $, given by
\begin{equation} \label{atc21a}
F(A,\varphi,c) (x)= \Big(\sum_{a=1}^d e_q^{A(a x) + \varphi (a x) - \varphi(x) - c}, \int  (\varphi - \varphi \circ \sigma - c) d \mu_{ \tilde{A}}\Big),
\end{equation}
for a fixed probability measure $\mu_{\tilde{A}}$ to be determined later.
The assumptions ensure that $F(\tilde A, 0, 0)=(1,0)$.
We will obtain functions $A\mapsto\varphi(A),$ $A\mapsto c(A)$ such that $ F(A,\varphi(A),c(A)) =(1,0)$ as a consequence of the implicit function theorem. 

\smallskip
Observe that the second coordinate in the right-hand side of ~\eqref{atc21a} is linear in $(\varphi,c)$. Moreover,  
using the derivative of $q$-exp maps in \eqref{prop5},
for each $(H,h)\in \text{Lip}(\Omega) \times \mathbb R$,
\begin{align*}
\sum_{a=1}^d e_q^{A(a x) + H (a x) - H(x) - h}
-\sum_{a=1}^d e_q^{A(a x)}
& 
    =
     \sum_{a=1}^d \; \Big(e_q^{\tilde{A}(a x) + H(a x) - H(x) - {h}} -    e_q^{\tilde{A}(a x)} \, 
    \Big) \\
    & = 
    \sum_{a=1}^d 
    \big[ 1+ (1-q)\tilde A(ax) \big]^{\frac{q}{1-q}} \; (\, H(ax)-H(x)-h\,) \\
    & + \mathcal O (\|H\|^2 + |h|^2),
\end{align*}
where the first term in the right-hand side above is linear in $(H,h)$.
This proves that
\begin{equation}
\label{eq:derFAq}
DF_A(0,0) (H,h)= \Big( \sum_{a=1}^d 
    \big[ 1+ (1-q)\tilde A(a\cdot) \big]^{\frac{q}{1-q}} \; (\, H(a\cdot)-H(\cdot)-h\,) , \int  (H - H \circ \sigma- h) d \mu_{ \tilde{A}}\Big).
\end{equation}
In view of 
\eqref{eq:derFAq} it is natural to consider the extensive non normalized potential $q\, \tilde A_q$ given by
$$ x \mapsto q\, \tilde A_q (x) = \,\log\,(\,  [1+(1-q)\tilde{A}(x) ]^{\frac{q}{1 - q}}\,),$$
(recall ~\eqref{i1})
and the associated extensive Ruelle operator
$$ \mathcal{L}_{q\tilde{A}_q} (f) (x) = \sum_{a=1}^d [1+(1-q) \,\tilde{A}(a\,x)]^{\frac{q}{1 - q}}\, f(a x). $$
Denote by $\mu_{\tilde{A}}$ the extensive equilibrium state for the potential $q\tilde{A}_q.$ By $\sigma$-invariance we deduce from ~\eqref{eq:derFAq} that 
\begin{equation*}
DF_{\tilde A}(0,0) (H,h)= \Big( \mathcal L_{q\tilde A_q} (H-H\circ \sigma-h)  \;,\;  -h\Big).
\end{equation*}
The same argument as in the proof of  Proposition~\ref{le:derat0} ensures that the linear map $\mathfrak{V}: \mathcal C_0 \to \text{Lip}(\Omega)_0$ given by
$$
\mathfrak{V}(H)=\mathcal L_{q\tilde A_q} (H-H\circ \sigma)
$$
is an isomorphism. It is simple to check that the latter implies that $DF_{\tilde A}(0,0)$ is an isomorphism as well. Hence, the conclusion of the theorem is a direct consequence of the implicit function theorem. 
\end{proof}

\medskip


\section{Examples} \label{exe2}

\subsection{One-step potentials}

We will consider first the  case of locally constant potentials $A$ depending  just on the first coordinate.

\smallskip

\begin{example} \label{supex}  Consider the potential $A:\{1,2\}^\mathbb{N} \to \mathbb{R}$, such that, is constant equal to $a_1$ on the cylinder $\overline{1}$
and equal to $a_2$ on the cylinder $\overline{2}.$ Also consider another potential $B:\{1,2\}^\mathbb{R} \to \mathbb{R}$, such that, is constant equal to $b_1$ on the cylinder $\overline{1}$
and equal to $b_2$ on the cylinder $\overline{2}.$ For $q=3/2$, we want to consider the equation \eqref{atchew1} for the family of potentials $A+s B,$ where $s \in \mathbb{R}.$ More precisely, we will exhibit explicit  solutions, which  will be denoted by $c(s)$ and $\varphi^s: \{1,2\}^\mathbb{R} \to \mathbb{N}$, with a dependence  on the parameter $s$,
for the equation 
\begin{equation} \label{ato1} \sum_{a=1}^2 e_{2-\frac{3}{2}}^{(A+ s B )(ax) + \varphi^s (ax) - \varphi^s(x) - c(s)}=1,
\quad \text{for any $x=(x_1,x_2,...)\in \Omega$.}
\end{equation}
Given $s \in \mathbb{R}$, consider
$c(s)= 1/2 (4 + a_1 + a_2 + b_1 s + b_2 s)$. From Theorem~\ref{mthm1} we get that $c(s) =  P_{3/2}(A+ s\,B ).$
Consider the function $\varphi^s: \{1,2\}^\mathbb{N} \to \mathbb{R}$  such that  for each $s$, it  is constant equal to $\varphi_2(s) $ on the cylinder $\overline{1}$
and equal to $\varphi_1(s)=0$ on the cylinder $\overline{2},$ where
$$\varphi_2(s) = 1/2 (-a_1 + a_2 - b_1 s + b_2 s +$$
$$ \sqrt{16 - a_1^2 + 2 a_1 a_2 - a_2^2 - 2 a_1 b_1 s + 2 a_2 b_1 s + 2 a_1 b_2 s -
    2 a_2 b_2 s - b_1^2 s^2 + 2 b_1 b_2 s^2 - b_2^2 s^2]}).
    $$
We leave to the reader the task to verify that the equation \eqref{ato1}  holds. When $a_1=2$, $a_2=5.5$, $b_1=0=b_2$, we get $\varphi_2(0)= 2.71825$ and $c(0)=5.75 $. The explicit expressions were obtained using the software Mathematica. In this way we get an explicit solution for \eqref{ato1}, when $s=0$.
Note also that
$$
 c(0) = P_{3/2}(A) = \sup_p \Big\{ H_{3/2}(p)+\int A \,d p  \,\Big\}.
$$

 From Proposition \ref{esteaq1}, if the previous $A$ satisfies
 $A=-  \log_q (\frac{1}{ J})$, we get that $P_q(A)=0$ and the $q$-equilibrium state for $A$ is the classical one for   $\log J$. It is well known that  this one  is the independent probability $\mathfrak{p}$ on $\Omega$  with weights $p_1,p_2$, where
\begin{equation}  \label{altra1}p_k=  \frac{ e_{2-q}^{\, \alpha_k}}{\sum_{r=1}^2 e_{2-q}^{\, \alpha_r}  }, \qquad \text{for $k=1,2$.}
\end{equation}
In fact, given the real numbers $\alpha_1$ and $\alpha_2$, we get that
$$\frac{ e_{2-q}^{\, \alpha_1}}{\sum_{r=1}^2 e_{2-q}^{\, \alpha_r}  }+ \frac{ e_{2-q}^{\, \alpha_2}}{\sum_{r=1}^2 e_{2-q}^{\, \alpha_r}  }=1,$$
and the summands are different if $\alpha_1\neq \alpha_2$.
Then consider the Jacobian $J: \{1,2\}^\mathbb{R} \to \mathbb{R}$, such that, it  is constant equal to $ \frac{ e_{2-q}^{\, \alpha_1}}{\sum_{r=1}^2 e_{2-q}^{\, \alpha_r}  }$ on the cylinder $\overline{1}$
and equal to $\frac{ e_{2-q}^{\, \alpha_2}}{\sum_{r=1}^2 e_{2-q}^{\, \alpha_r}  }$ on the cylinder $\overline{2},$ and take $a_1= -  \log_q (\frac{1}{ \frac{ e_{2-q}^{\, \alpha_1}}{\sum_{r=1}^2 e_{2-q}^{\, \alpha_r}  }})$ and
 $a_2= -  \log_q (\frac{1}{ \frac{ e_{2-q}^{\, \alpha_1}}{\sum_{r=1}^2 e_{2-q}^{\, \alpha_r}  }}).$
In case $q=3/2$,
one obtains
$$p_1 = \frac{(2 + \alpha_2)^2}{8 + 4 \alpha_1 + \alpha_1^2 + 4 \alpha_2 + \alpha_2^2}
\quad\text{and}\quad p_2 = \frac{(2 + \alpha_1)^2}{8 + 4 \alpha_1 + \alpha_1^2 + 4 \alpha_2 + \alpha_2^2}. $$
Taking derivative at $s=0$, we get
\begin{equation} \label{atro1}
\frac{d}{d s} P_{3/2}(A+ s\,B )|_{s=0} = \frac{1}{2} b_1  +  \frac{1}{2} b_2  \neq b_1 \,p_1 + b_2\, p_2 =\int B \,d \mathfrak{p}.
\end{equation}
\end{example}

\begin{remark}
    Expression~\eqref{atro1} shows that the derivative of the non-extensive pressure differs from the one in the extensive setting (cf.  Proposition 4.10 in \cite{PP}).
In the Appendix Section \ref{dpreq}  we consider for the case $q=1/2$ the  derivative of pressure for a more general class of potentials (see \eqref{loo7}).
\end{remark}

\bigskip
\subsection{Locally constant two-step potentials}

Now, we will consider the next level of complexity considering the Ruelle operator equation for  potentials  $A:\{1,2\}^{\mathbb N} \to \mathbb R$  which depend just on the   first  two coordinates, that is, 
\begin{equation} \label{kilo}A(x_1,x_2,x_3,..,x_n,...)=A(x_1,x_2):=a_{x_1\,x_2}\end{equation} which is constant and equal to $a_{ij}$ in each cylinder $\overline{i\,j}$, $i,j=1,2$.
The information of the potential $A$ is described by the matrix
\begin{equation} \label{nnh}
A= \left(
\begin{array}{cc}
a_{1\,1}  & a_{1\,2}    \\
a_{2\,1}  & a_{2\,2}
\end{array}
\right).
\end{equation}
Given $0<{q}<1$ we want to  find $\varphi$ and $c$,
which are solutions of
\begin{equation} \label{atch3325} \sum_{a=1}^2 e_{q}^{A(a x) + \varphi (a x) - \varphi(x) - c}=1,
\end{equation}
with $\varphi (x_0)=\alpha$, for fixed $x_0$ and $\alpha$ (we will refer to this setting as the Markov case). 

\smallskip
We will assume, without loss of generality, that $A(1,1)=a_{11}$ is the minimum of the entries of the matrix ~\eqref{nnh}. As a potential  $A$ admits  $\varphi$ and $c$ as solutions of \eqref{atch3325} is equivalent to say that
	$\hat{A} =A - A(1,1) $ has $\varphi$ and $ \hat{c} = c +A(1,1)$ as solutions of the same equation,  in order to solve \eqref{atch3325} 
    we will assume throughout that $A(1,1)=a_{11}=0.$ In this case all other entries of the above matrix are nonegative. 

We will show examples of potentials of the form \eqref{kilo} so that the solutions $\varphi$  are  constant  equal to $\varphi_1$ in the cylinder $\overline{1}$ and equal to  $\varphi_2$ in the cylinder $\overline{2}$. In this case,  it follows from Proposition \ref{esteaq1} that the corresponding non-extensive equilibrium probability measure will be a (classical) stationary Markov probability. Observe also that 
if $\varphi$ is a solution of \eqref{atch3325}, then adding a constant to $\varphi$ we also get a solution of \eqref{atch3325}.
Therefore, it is enough to search for a solution $\varphi$ which is equal to $\varphi_1=0$ in the cylinder $\overline{1}$ and equal to  $\varphi_2$ in the cylinder $\overline{2}$.  In this way the function $\varphi$ depends on the first coordinate in $\Omega$ and is determined by the value $\varphi_2$ (it is equal to zero in the cylinder $\overline{1}$).

\begin{remark}
    The assumption that all summands in \eqref{atch3325} are strictly positive is necessary for the connection with the existence of
positive Jacobians which are of great importance in classical Thermodynamic Formalism. This issue is clearly related to the claims presented in  Lemma \ref{analy} which in some way connects the non-extensive setting with the extensive setting.
\end{remark}

In consonance with the previous discussion we will  assume that all entries in the matrix
\begin{equation} \label{atch51}
\left(
\begin{array}{cc}
e^{ a_{11} - c}_{q}  & e^{ a_{12} - \varphi_2 -  c}_{q}     \\
e^{ a_{21} + \varphi_2 - c}_{q}    & e^{ a_{22} - c}_{q}
\end{array}
\right)
\end{equation}
are  positive numbers and that $a_{11}=0$, and consider the functions
\begin{equation} \label{atch5} f_1 (A, c, \varphi_2) = e^{ - c}_{q} +e^{ a_{21} + \varphi_2 - c}_{q} 
\quad\text{and}\quad
f_2 (A, c, \varphi_2) = e^{ a_{12} - \varphi_2 -  c}_{q}  +e^{ a_{22} - c}_{q}.
\end{equation}
The next example shows that the solutions $\varphi$ of \eqref{atch3325} need not be unique.

\begin{example}[\emph{Non-uniqueness of solutions}] \label{jana} Set $a_{11}=0=a_{22},$ $q=1/2$ in \eqref{atch5}. Assume  $\varphi$  is equal to $\varphi_1=0$ in the cylinder $\overline{1}$ and equal to  $\varphi_2$ in the cylinder $\overline{2}$. Take
	
\begin{equation}  \label{tem0}	c = 1/2 (4 + \sqrt{16 - a_{12}^2 + 2 a_{12} a_{21} - a_{21}^2)}
\end{equation}
	
\begin{equation}  \label{tem1}	\varphi_2  = -2 - a_{21} + c + \sqrt{4 c - c^2},\end{equation}
when $4\,c - c^2\geq 0$ and $16 - a_{12}^2 + 2 a_{12} a_{21} - a_{21}^2\geq 0$.
	One can show that    
	\begin{equation} \label{olk} f_1 (A, c, \varphi_2)=1 \,\, \text{and}\,\,     f_2 (A, c, \varphi_2)=1.
	\end{equation} 
	In this case we get $\varphi$ and $c$ which are explicit solutions for the $\tilde q=1/2$ non-extensive Ruelle Theorem.
	
	When $a_{12}=2$ and $a_{21}=3.5$ we get that $\varphi_2=-0.89595$ and $c=3.85405 $ solves \eqref{olk}.  In this way we get explicitly  $\varphi$ and $c$ solving \eqref{atch3325}. Surprisingly, if we take above
	  \begin{equation}  \label{tem12}	\varphi_2  = -2 - a_{21} + c - \sqrt{4 c - c^2}\end{equation} instead of \eqref{tem1}, we get $	\varphi_2=-2.39595 $ and $c=3.85405 $ solving \eqref{atch3325}, for $a_{12}=2$ and $a_{21}=3.5$. In both cases, all values $e_{q}^{A(a x) + \varphi (a x) - \varphi(x) - c}$ in \eqref{atch3325} are positive.

	  Therefore, it is possible to get two different eigenfunctions associated to two different eigenvalues which satisfy the property that all summands in \eqref{atch3325} are positive. This shows that the non-extensive analogous of the Ruelle theorem displays issues of a much more complex nature.

\end{example}

The next example will show that there are examples of potentials $A$ which admit solutions for \eqref{atch3325}  where one of the summands is equal to zero. This is a further evidence that 
results analogous to the classical Ruelle's Theorem,
about the existence of eigenfunctions and eigenvalues in the non-extensive setting becomes a much more complex matter.

Now we present explicit solutions for the eigenfunction problem covering different possibilities.

\begin{example} \label{explimeq} 
Take $0<q<1$ and $a_{11}=0=a_{21}$. We will exhibit explicit formulas for  solutions $c$ and $\varphi$ for the system
$$
\begin{cases}
\begin{array}{l}
  1 = e^{  - c}_{q} +e^{   \varphi_2 - c}_{q}     \\
      1 = e^{ a_{12} - \varphi_2 -  c}_{q}  +e^{ a_{22} - c}_{q}.
\end{array}    
\end{cases}
$$
where all summands are
 positive. This will  provide solutions $\varphi$  for  \eqref{ato1}.
 Denote $q_1= e^{ - c}_{q} $ and $q_2= e^{ a_{22} - c}_{q}$, where we assume that the parameters are such that
$q_1,q_2 \in (0,1)$.
 In terms of the values $q_1$, $q_2$, we  get the solutions
   \begin{equation} \label{vo1} c = - \log_q (q_1),
   \end{equation}
   \begin{equation} \label{vo2}\varphi_2 =  \log_q (1 - q_1)  + c,
  \end{equation}
  \begin{equation} \label{vo3} a_{12} =  \log_q ( 1 - q_2 ) + \varphi_2 + c,
    \end{equation}
and
 \begin{equation} \label{vo4} a_{22} =  \log_q (q_2) + c.
 \end{equation}
The eigenfunction $\varphi$ is equal to zero in the cylinder $\overline{1}$ and is equal to $\varphi_2$ in the cylinder $\overline{2}.$
For example, when $q=2/3$, $a_{12}=0.857533$, $a_{22}=0.52199$, we get the $q$-eigenvalue $c=0.991701$ and the $q$-eigenfunction $\varphi$, is such that $\varphi_2=0.655413$. This case corresponds to $q_1=0.3$ and $q_2=0.6$.
When $q=4/5$, taking  $q_1=0.2$ and $q_2=0.3$, we get $a_{12}= 2.18972$, $a_{22}=0.306117$, $\varphi_2=1.15786$ and $c=1.3761$. In this way we get another example where we can find an explicit solution $\varphi$ and $c$.
Furthermore, 
 when $q=1/2$,  $a_{11}=0=a_{21},$   $a_{12}= - 5.67332$, $a_{22}= -3.09545$, one has that $c=0$, and  $\varphi_2 =-2$ provide solutions $\varphi$  for \eqref{ato1}, and one has that $e_{1/2}^{\varphi-c}=0.$

 \end{example}


\section{Appendix A: The derivative of the dynamical $q$-pressure} \label{dpreq}

Recall that for each $0<q<1$ and every Lipschitz continuous potential $A: \Omega \to \mathbb{R}$, the $q$-pressure of $A$ is defined by
\begin{equation*} \label{PPP2} P_q (A) =  \sup_{\mu \in \mathcal{G} }\,\Big[  H_q (\mu)+ \int A(x) \,d \mu (x) \Big] ,
\end{equation*}
where $H_q (\mu)$ is the $q$-entropy  of $\mu\in\mathcal{G}$ defined in \eqref{HQ}.

From Remark \ref{rmk.existencea}, for fixed $0<q<1$, we know that $P_q(A)$ is differentiable in $A$. We consider in this appendix infinitesimal variations of a normalized potential $A$ (for the differentiability of the pressure function on the extensive case see e.g. \cite{BCV,Lall2,Lop}).

\medskip
  Given the Lipschitz continuous function   $v: \Omega \to \mathbb{R}$ we want to compute
$  
\frac{d}{ds}P_q( A + s v)|_{s=0}
$ 
for some $q\neq 1$.
We will prove the following lemma.

\begin{lemma}
    \label{le:diff-q12}
Consider a Lipschitz continuous Jacobian  $J$ and the associated equilibrium state $\mu$ and let $A$ be the normalized potential $A=-\log_q (\frac{1}{J})$. 
Assume that $v$ is a Lipschitz continuous potential and assume that there exist $(\varphi_s, c_s)$ such that 
\begin{equation}
    \label{eq:cohoms}
\sum_{a=1}^d e_{2-q }^{( A(ax) + s v(ax) ) + [\varphi_s(a x) - ( \varphi_s (x)] - c_s }=1 
\end{equation}  
for every $x\in \Omega$ and every small $s$. If $q=\frac12$ then 
\begin{align}
   \frac{d}{ds}P_q( A + s v)|_{s=0}
   & = \frac{\int  J(x)^{1/2}  v( x) d \mu  +  \int J(x)^{1/2} (\frac{d}{ds} \varphi_s(x) - \frac{d}{ds}    \varphi_s (\sigma(x)))|_{s=0}  \; d\mu}{\int J(x)^{1/2}  d \mu(x)} .
   \label{loo7} 
\end{align}
\end{lemma}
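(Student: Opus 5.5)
By Theorem~\ref{mthm1}, the normalization \eqref{eq:cohoms} gives $P_q(A+sv)=c_s$ for every small $s$, so the whole task is to compute $\frac{d}{ds}c_s|_{s=0}$. At $s=0$ we may take $\varphi_0=0$ and $c_0=0$. Indeed, $A=-\log_q(1/J)$ and, by \eqref{jau}, $e_{2-q}^{A}=J$, so that $\sum_a e_{2-q}^{A(ax)}=\sum_a J(ax)=1$. This is consistent with Proposition~\ref{esteaq1}, which gives $P_q(A)=0$. I would assume, as the statement implicitly does, that $s\mapsto(\varphi_s,c_s)$ is differentiable at $s=0$ (for instance via the implicit-function argument of Theorem~\ref{ddestemu}). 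Differentiating \eqref{eq:cohoms} in $s$ at $s=0$ then yields a linear cohomological identity for $\dot c=\frac{d}{ds}c_s|_{s=0}$, $\dot\varphi=\frac{d}{ds}\varphi_s|_{s=0}$ and $v$.

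The key computation is the derivative of $e_{2-q}$. From \eqref{T1e}, $\frac{d}{du}e_{\tilde q}^u=(e_{\tilde q}^u)^{\tilde q}$. With $\tilde q=2-q=\frac32$ and $e_{3/2}^{A}=J$, this gives the weight $J^{3/2}$. Differentiating \eqref{eq:cohoms} therefore gives, for every $x$,
\begin{equation*}
\sum_{a=1}^d J(ax)^{3/2}\big(v(ax)+\dot\varphi(ax)-\dot\varphi(x)-\dot c\big)=0 .
\end{equation*}
Writing $J(ax)^{3/2}=J(ax)\,J(ax)^{1/2}$, the left side is $\mathcal L_{\log J}\big(J^{1/2}(v+\dot\varphi-\dot\varphi\circ\sigma-\dot c)\big)(x)$, where $\dot\varphi\circ\sigma$ evaluated at $ax$ equals $\dot\varphi(x)$. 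Since $\mathcal L_{\log J}^*\mu=\mu$, integrating against $\mu$ gives
\begin{equation*}
\int J^{1/2}\big(v+\dot\varphi-\dot\varphi\circ\sigma\big)\,d\mu=\dot c\int J^{1/2}\,d\mu .
\end{equation*}
Solving for $\dot c$ yields exactly \eqref{loo7}, with the $\dot\varphi$ term surviving because the weight $J^{1/2}$ prevents the coboundary from integrating to zero.

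The step that needs the most care is bookkeeping rather than anything deep. First, one must confirm the exponent $\tilde q=2-q$ in the derivative formula for $e_{\tilde q}$ (see \eqref{prop5}) and that it produces $J^{1/2}$ after factoring out one power of $J$. Second, one must justify passing the derivative inside the sum, which holds because the sum is finite and the summands are positive near $s=0$. Third, one must justify dividing by $\int J^{1/2}\,d\mu$, which is positive because $J>0$. The differentiability of $c_s$ and $\varphi_s$ is the only genuinely nontrivial input; if it is not taken as part of the hypothesis, I would obtain it from the implicit function theorem applied as in Theorem~\ref{ddestemu}, with $e_q$ replaced by $e_{2-q}$.
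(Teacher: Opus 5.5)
Your argument is the paper's proof essentially line for line. You identify $c_s$ with $P_q(A+sv)$ via Theorem~\ref{mthm1}, differentiate \eqref{eq:cohoms} using $\frac{d}{du}e_{3/2}^u=(e_{3/2}^u)^{3/2}$ so the weights are $J^{3/2}=J\cdot J^{1/2}$, and integrate against $\mu=\mathcal L_{\log J}^*\mu$. That part is fine and correctly computes $\frac{d}{ds}c_s|_{s=0}$.

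\textbf{The gap is the first step, which you inherit from the paper:} $P_q(A+sv)=c_s$ does not hold, and \eqref{loo7} fails with it. Take $d=2$, $q=\frac12$ and $J(x)=p_{x_1}$ with $(p_1,p_2)=(0.9,0.1)$. Then $A(x)=a_{x_1}$ with $a_j=2-2p_j^{-1/2}$, and $(\varphi_0,c_0)=(0,0)$ solves \eqref{eq:cohoms} at $s=0$. For the Bernoulli measure $\tilde\mu\in\mathcal G$ with weights $(0.95,0.05)$,
\[
H_q(\tilde\mu)+\int A\,d\tilde\mu=2\big(\sqrt{0.95}+\sqrt{0.05}-1\big)+0.95\,a_1+0.05\,a_2\approx 0.397-0.319>0,
\]
so $P_q(A)>0=c_0$.

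More precisely, suppose $A$ and $v$ depend only on $x_1$. Writing $\int f\,d\tilde\mu=\int\mathcal L_{\log\tilde J}f\,d\tilde\mu$ shows that $H_q(\tilde\mu)+\int(A+sv)\,d\tilde\mu$ is a $\tilde\mu$-average of $\Phi_s(r)=2(\sum_j\sqrt{r_j}-1)+\sum_j(a_j+sv_j)r_j$ evaluated at $r=(\tilde J(1x),\tilde J(2x))$. Hence $P_q(A+sv)=\max_r\Phi_s(r)$, a strictly concave problem. Its maximizer at $s=0$ is characterized by $r_j^{-1/2}+a_j=\mathrm{const}$, which gives $r^*\approx(0.9635,0.0365)$. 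By the envelope theorem, $\frac{d}{ds}P_q(A+sv)|_{s=0}=\sum_j r^*_jv_j$.

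On the other hand, $\varphi_s\equiv0$ is admissible in \eqref{eq:cohoms} here. Then \eqref{loo7} predicts the weights $p_j^{3/2}/\sum_k p_k^{3/2}=(27/28,1/28)$. At these weights $r_j^{-1/2}+a_j$ is $\approx0.910$ for $j=1$ and $\approx0.967$ for $j=2$, so they are not the maximizer. The two sides of \eqref{loo7} therefore differ, e.g.\ for $(v_1,v_2)=(1,0)$.

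The root cause is Proposition~\ref{esteaq1}. It rests on \eqref{oret}, i.e.\ on \eqref{rt15}: $\sum_j r_j\log_q(1/p_j)\ge\sum_j r_j\log_q(1/r_j)$. For $q\neq1$ this is false, and the numbers above violate it. The valid Tsallis inequality $-\sum_j r_j\log_q(p_j/r_j)\ge0$ does not split into a difference of $\log_q(1/\cdot)$ terms, because $\log_q$ is not additive.

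Equivalently, in the Lagrange condition $\frac{q}{1-q}r_j^{q-1}+a_j=\lambda$ the multiplier does not drop out after normalizing. This is the same slip as fixing $\lambda=\frac1{1-q}$ in Lemma~\ref{pr1}. So $q$-equilibria have the form $r_j\propto e_{2-q}^{\beta' a_j}$ with a renormalized $\beta'=\frac1{(1-q)\lambda}\neq1$ (here $\beta'\approx2.2$), not $e_{2-q}^{a_j}$.

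Thus what your argument, and the paper's, actually establishes is the formula for the derivative of the $(2-q)$-log-eigenvalue $c_s$. It does not give the derivative of $P_q$.
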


\begin{proof}
Fix $q=1/2$ and let $A,v:\Omega \to \mathbb R$ be Lipschitz continuous as in the statement.
As $e_{2-q}^A=J$ (recall ~\eqref{prop16}) then
$ 
\sum_{a=1}^d e_{2-q }^{A(ax)} = \sum_{a=1}^d J(a x)=1.
$ 
In consequence, 
\begin{equation} \label{loo4}   \int \sum_{a=1}^d J (ax) f(a x) d \mu = \int \sum_{a=1}^d e_{2-q }^{A(ax)} f(a x) d \mu = \int f d \mu
\end{equation}
for every continuous function $f$ and 
the $q$-pressure of $A$ is $P_q (A)=0$
(cf. Proposition~\ref{esteaq1}).
Moreover, it is not hard to check that 
\begin{equation} \label{loo1}  e_{2-q}^{f(s) + g(s) - h(s)} = \Big[1 - \frac{1}{2} (f(s) +g(s)  -h(s))\Big]^{-2}
\end{equation}
and, using the formula for the derivative of the $(2-q)$-exp map in \eqref{prop17},
\begin{equation} \label{loo2} \frac{d}{ ds} e_{2-q}^{f(s) + g(s) - h(s)} = \Big[1 - \frac{1}{2} (f(s) +g(s) - h(s))\Big]^{-3} \cdot \Big[  f '(s) + g '(s) - h '(s)\Big].
\end{equation}

Now we note that if ~\eqref{eq:cohoms} holds then $c_s=P_q( A + s v).$
Therefore, differentiating both sides of \eqref{eq:cohoms} with respect to the parameter $s$ and using \eqref{loo1}  and \eqref{loo2}
\begin{align*}
   0 & = \frac{d}{ds} \sum_{a=1}^d e_{2-q }^{( A(ax) + s v(ax) ) + [\varphi_s(a x) -  \varphi_s (x)] - c_s }|_{s=0} \\
   & = 
   \sum_{a=1}^d  \Big( 1+ \frac{1}{2} \log_q \frac{1}{ J(a x)}\Big)^{-3}\, \Big[v(a x)  +
(\frac{d}{ds} \varphi_s(a x) - \frac{d}{ds}    \varphi_s (x))|_{s=0} -
\frac{d}{ds}c_s|_{s=0} \,\Big]
\end{align*}
As $(1+ \frac{1}{2} \log_q (1/y)\, )^{-1}= y^{1/2}$ for any $y>0$ one can write
$$  \Big( 1+ \frac{1}{2} \log_q \frac{1}{ J(a x)}\Big)^{-3} = J(a x)^{3/2}$$
and, consequently, 
$$ \sum_{a=1}^d J(a x) 
\; \Big[ J(a x)^{1/2}  v(a x) +  J(a x)^{1/2} (\frac{d}{ds} \varphi_s(a x) - \frac{d}{ds}    \varphi_s (x))|_{s=0} -  J(a x)^{1/2}
\frac{d}{ds}c_s|_{s=0} \Big]=0
$$
for every $x\in \Omega.$ Furthermore, using that 
$e^{A}_{2-q}=J$,
\begin{equation} \label{loo3}
 \sum_{a=1}^d e^{A(ax)}_{2-q} [ J(a x)^{1/2}  v(a x) +  J(a x)^{1/2} (\frac{d}{ds} \varphi_s(a x) - \frac{d}{ds}    \varphi_s (x))|_{s=0}  -  J(a x)^{1/2} \frac{d}{ds} c_s|_{s=0} ]=0.
 \end{equation}
 Integrating with respect to $\mu$  together with \eqref{loo4} and \eqref{loo3} one deduces that
 \begin{align*}
  0 & =\int  \sum_{a=1}^d e^{A(ax)}_{2-q} [ J(a x)^{1/2}  v(a x) +  J(a x)^{1/2} (\frac{d}{ds} \varphi_s(a x) - \frac{d}{ds}    \varphi_s (x))|_{s=0}  -  J(a x)^{1/2} \frac{d}{ds} c_s|_{s=0} ] d \mu(x)    \\
  & 
  = \int  J(x)^{1/2}  v( x) +  J(x)^{1/2} (\frac{d}{ds} \varphi_s(x) - \frac{d}{ds}    \varphi_s (\sigma(x)))|_{s=0}  -  J(x)^{1/2} \frac{d}{ds} c_s|_{s=0}  \; d \mu(x) \\
  & \int  J(x)^{1/2}  v( x) d \mu  +  \int J(x)^{1/2} (\frac{d}{ds} \varphi_s(x) - \frac{d}{ds}    \varphi_s (\sigma(x)))|_{s=0}  d\mu - \frac{d}{ds}  c_s|_{s=0}\int J(x)^{1/2}  d \mu(x).
 \end{align*}
This finishes the proof of the lemma.
\end{proof}

 \section{Appendix B: Shannon approach to the non-extensive measure theoretic entropy} \label{sec:shannon}
 
 In this subsection we shall introduce an alternative notion of non-extensive entropy for all probability measures in $\mathcal M(\sigma)$, which is
 a natural extension of the concept introduced by M\'eson and Vericat \cite{MeVe} for Bernoulli probability measures.
 Fix $0<q<1$.
 Given a finite partition $\cP$ of a compact metric space $X$, define
 \begin{equation}\label{Kolmogorov-Tsallis}
 \mathfrak{H}_q (\mu,\cP) = \frac1{1-q} \Big( \sum_{P\in \cP} \mu(P)^q -1 \Big)
 = \frac1{1-q} \Big( e^{\log \big(\sum_{P\in \cP} \mu(P)^q\big)} -1 \Big),
 \end{equation}
 which, using ~\eqref{T11} one can write alternatively 
 $$
 \mathfrak{H}_q (\mu,\cP) = \sum_{P\in \cP} \mu(P) \log_q \Big( \frac{1}{\mu(P)} \Big)
 $$
 Lemma~\ref{tos} ensures that
 $
 \mathfrak{H}_q (\mu,\cP) \leqslant \log_q(d)
 $
 and that its maximal value is attained in the case of the equidistributed Bernoulli probability measure.
 Moreover, if $\cP^{(n)}=\bigvee_{j=0}^{n-1} \sigma^{-j}(\cP)$ denotes the dynamically defined partition, 
 $
 \mathfrak{H}_q (\mu,\cP^{(n)})
 $
 need not be sub-additive for $q\neq 1$, due to the properties of $\log_q(\cdot)$.
 Using ~\eqref{Kolmogorov-Tsallis} we obtain
 \begin{equation}\label{Kolmogorov-Tsallis-resc}
 \log [\,1+ (1-q) \, \mathfrak{H}_q (\mu,\cP)\,] = \log \big(\sum_{P\in \cP} \mu(P)^q\big).
 \end{equation}
 Inspired by \eqref{Kolmogorov-Tsallis-resc}, M\'eson and Vericat define the \emph{$\mathfrak{h}_q$-entropy of $\mu$}
 by
 $$
 \mathfrak{h}_q (\mu)= \sup_{\cP}\; \mathfrak{h}_q (\mu,\cP)
 $$
 where the supremum is taken over all finite partitions of $X$ and
 \begin{equation}\label{Kolmogorov-Tsallis-resc2}
 \mathfrak{h}_q (\mu,\cP) =\limsup_{n\to\infty} \frac1n \log [\,1+ (1-q) \, \mathfrak{H}_q (\mu,\cP^{(n)})\,].
 \end{equation}

 We will need some auxiliary results.
 
 \begin{lemma}\label{auxle}
 	The function $(p_1, p_2, \dots, p_n ) \mapsto \log \big(\sum_{i=1}^d p_i^q\big)$ defined on the space of probability vectors
 	$p=(p_1, p_2, \dots, p_n )$, and attains the maximum value $\frac1{1-q}\log (n)$ at the equidistributed probability vector.
 \end{lemma}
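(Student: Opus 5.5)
The plan is to reduce the statement to a one-line convexity estimate, using that $\log$ is increasing, so it suffices to maximize $\Phi(p)=\sum_{i=1}^n p_i^q$ over probability vectors $p=(p_1,\dots,p_n)$. (I read the index range as $1\le i\le n$ throughout; the $d$ in the displayed sum should be $n$.) This is essentially the same optimization as in the first step of the proof of Lemma~\ref{tos}. There it was handled by Lagrange multipliers, but here I would use Jensen's inequality instead, since it gives both the bound and the uniqueness of the maximizer at once.

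\emph{Step 1 (upper bound).} Fix $0<q<1$. The map $t\mapsto t^q$ is strictly concave on $[0,\infty)$. Applying Jensen's inequality with the uniform weights $1/n$ gives
\[
\frac1n\sum_{i=1}^n p_i^q \;\le\; \Big(\frac1n\sum_{i=1}^n p_i\Big)^q = n^{-q},
\]
so $\Phi(p)\le n^{1-q}$. Taking logarithms yields $\log\big(\sum_i p_i^q\big)\le (1-q)\log n$.

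\emph{Step 2 (attainment and uniqueness).} At $p_0=(1/n,\dots,1/n)$ we get $\Phi(p_0)=n\cdot n^{-q}=n^{1-q}$, so the bound is attained. Strict concavity forces equality in Jensen only when all the $p_i$ coincide, so $p_0$ is the unique maximizer. This is consistent with Lemma~\ref{tos}: since $\log[1+(1-q)\log_q(n)]=\log n^{1-q}$, the statement is just \eqref{Kolmogorov-Tsallis-resc} evaluated at the maximal value $\mathfrak H_q=\log_q(n)$.

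The only real obstacle is bookkeeping of the constant. The computation gives the maximum value $(1-q)\log n=\log(n^{1-q})$, not $\frac1{1-q}\log n$. The latter value is what one obtains after multiplying by the normalizing factor $\frac1{1-q}$, which is how the quantity is used in \eqref{Kolmogorov-Tsallis-resc2}. So I would state the maximum as $\frac{1}{1-q}\log\big(\sum_i p_i^q\big)\le\log n$, equivalently $\log\big(\sum_i p_i^q\big)\le (1-q)\log n$, with equality exactly at the equidistributed vector, and carry this normalized form into the subsequent estimates for $\mathfrak h_q(\mu,\cP)$.
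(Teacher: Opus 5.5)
Your argument is correct, and it takes a different route from the paper. The paper applies Lagrange multipliers to $\log\big(\sum_j p_j^q\big)-\alpha\sum_j p_j$. The stationarity condition $q\,p_i^{q-1}/\sum_j p_j^q=\alpha$ forces all $p_i$ to coincide, so $p_i=1/n$, and the value $\log\big(\sum_i n^{-q}\big)$ is then read off.

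That computation only locates the interior critical point. It does not by itself show that this point is a global maximum. It also does not treat boundary vectors with some $p_i=0$, where the derivative blows up. Closing that gap would need an extra remark, for instance that $p\mapsto\log\big(\sum_i p_i^q\big)$ is concave. Your Jensen argument, using strict concavity of $t\mapsto t^q$ on $[0,\infty)$, gives the global bound, attainment and uniqueness at once, including on the boundary of the simplex. It is therefore the more complete proof here.

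You are also right about the constant. The paper's own last line, $\log\big(\sum_{i} n^{-q}\big)=(1-q)\log n$, contradicts the $\frac{1}{1-q}\log n$ in the statement, which should read $(1-q)\log n$.

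One small slip is in your last paragraph. Multiplying $(1-q)\log n$ by $\frac{1}{1-q}$ gives $\log n$, not $\frac{1}{1-q}\log n$. So the stated value is not explained by a normalization; it is simply an error in the statement. The corrected form you propose, $\frac{1}{1-q}\log\big(\sum_i p_i^q\big)\le\log n$ with equality exactly at the equidistributed vector, is the right one.
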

 
 \begin{proof} In order to make use Lagrange multipliers consider the function
 	$$
 	p=(p_1, p_2, \dots, p_n ) \to \log \,\Big(\sum_{j=1}^d p_j^{q}\Big) - \alpha \sum_{j=1}^d  \, p_j
 	$$
 	where $\alpha $ is a constant.
 	Taking derivative in $p_i$, $i=1,2,...,n$, we look for the condition
 	\begin{equation}  \label{louc1a1}
 	\frac{q\, p_i^{q-1}}{\sum_{j=1}^d p_j^{q}}  - \alpha=0,
 	\end{equation}
 	which implies that
 	$$
 	{\, p_i} = \Big[\alpha \frac1q \sum_{j=1}^d p_j^{q}\Big]^{\frac1{q-1}}, \quad \forall  i=1,2,...,n.
 	$$
 	This, together with the assumption that $\sum_{i=1}^d p_i=1$, this implies that $p_i=\frac1n$, that $p$ is the equidistributed
 	probability vector and that the maximum value is
 	$\log \big(\sum_{i=1}^d n^{-q}\big)$. This proves the lemma.
 \end{proof}
 
 One could ask whether the usual notion of topological entropy admits a natural counterpart in the non-extensive framework.
 Given a finite open covering $\mathcal U$ of $X$ we write
 $N_q(\cU)=\log_q(\# \cU)$. A simple modification of the arguments in \cite[Section 7.1]{Walters} and using
 property ~\eqref{goodeq}  of  $q$-log functions we conclude that
 $$
 N_q(\cU \vee \mathcal V) = \log_q(\cU \vee \mathcal V)
 = N_q(\cU) + N_q( \mathcal V) + (1-q)\,N_q(\cU) N_q( \mathcal V)
 $$
 for every finite open coverings $\cU, \mathcal V$ of $X$. In consequence, denoting by $\cU^{(n)}=\bigvee_{j=0}^{n-1} \sigma^{-j}(\cU)$ the
 dynamically defined open covering of $\cU$, one concludes that
 \begin{align*}
 N_q(\cU^{n+m}) & = \log_q\Big(\bigvee_{j=0}^{n-1} \sigma^{-j}(\cU) \vee \bigvee_{j=0}^{m-1} \sigma^{-j}(\sigma^{-n}(\cU))\Big) \\
 & \leqslant N_q(\cU^{(n)}) + N_q(\cU^{(m)})+ (1-q)\,N_q(\cU^{(n)}) N_q( \mathcal U^{(m)})
 \end{align*}
 and so the sequence $(N_q(\cU^{n}))_{n\geqslant 1}$ is not sub-additive for $q\neq 1$.
 In particular a notion of $q$-topological entropy should be defined differently.
 
 Inspired by the previous discussion and Lemma~\ref{auxle}, we define the $q$-\emph{topological entropy of $\sigma$} by 
 $$
 \mathfrak{h}_{top,q} (\sigma) =\sup_{\mathcal U} \frac1{1-q} \limsup_{n\to\infty} \frac1n N(\mathcal U^{(n)}),
 $$
 where $N(\mathcal U^{(n)})$ denotes the smallest cardinality of an open subcover of $\cU^{(n)}$.

 \smallskip

 \section{Appendix C - Renyi entropy} \label{renyi}

 \begin{definition}
 	\label{def:q-Renyi}
 	The \emph{$q$-Renyi entropy} for $p=(p_1,p_2,...,p_d)$ is defined as
 	\begin{equation} \label{R1} H^R_q (p) = \frac{\log (\sum_{j=1}^d \,p_j^q)}{1-q} .
 	\end{equation}    
 \end{definition}

 Given $0<q<1$, one can show that $H^R_q (p)=F(H_q (p))$ for every probability vector $p=(p_1,p_2,...,p_n)$, where $F$ stands for the monotone increasing map $F: \mathbb R_+ \to \mathbb R_+$ given by
 \begin{equation} \label{F1} x \mapsto F(x)= \frac{\log (1 + (1-q)\, x) }{1-q}.
 \end{equation}

 A version of the $q$-entropy, when considering probability measures defined on sets that are not finite  is presented in (1.28) in \cite{UmaTsa} and also in  (14) in \cite{CS}, but our dynamical definition of $q$-entropy has different features when compared with the ones in  these two references.

 In this way, the understanding of properties of $H_q$  can help in the understanding of the Renyi entropy $H^R_q.$
 
 \smallskip

 For the MaxEnt method, Tsallis et al  \cite{Tsa3}  considered two forms of internal energy constraints. In \cite{Abe} a third choice was considered. Our main focus is on the pressure problem and not on the MaxEnt method.
 
 It is also true that
 \begin{equation} \label{rt15}
 \sum_j q_j \log_q (\frac{1}{p_j}) - \sum_j q_j \log_q (\frac{1}{q_j})\geqslant 0.
 \end{equation}
 and that the maximal value of the left-hand side of \eqref{rt15} is equal to $d$.

\section{Appendix D: Basic properties of $q$-exp and $q$-log functions} \label{apen}

\subsection*{$q$-log functions}
Under the  non-extensive point of view it is natural to consider the  $q$-log function (see \cite{Nau,Tsa0,Tsa}), defined by
\begin{equation} \label{T1} u \to \log_q (u) =\frac{1}{1-q} ( u^{1-q}-1),
\end{equation}
where $q\neq 1$ and $u>0$.
We observe that $\log_q(1)=0$, that $\log(x)\geq \log_q(x)$, when $q\geq 1$, that $\log(x)<\log_q(x)$, when $q<1$, and that one recovers the classical function $\log$ taking the limit of  $\log_q $ as $q \to 1$.
See Figure \ref{fig6} for the graph of $\log_q$. 

\begin{figure}[h!]
	\centering
	\includegraphics[scale=0.31]{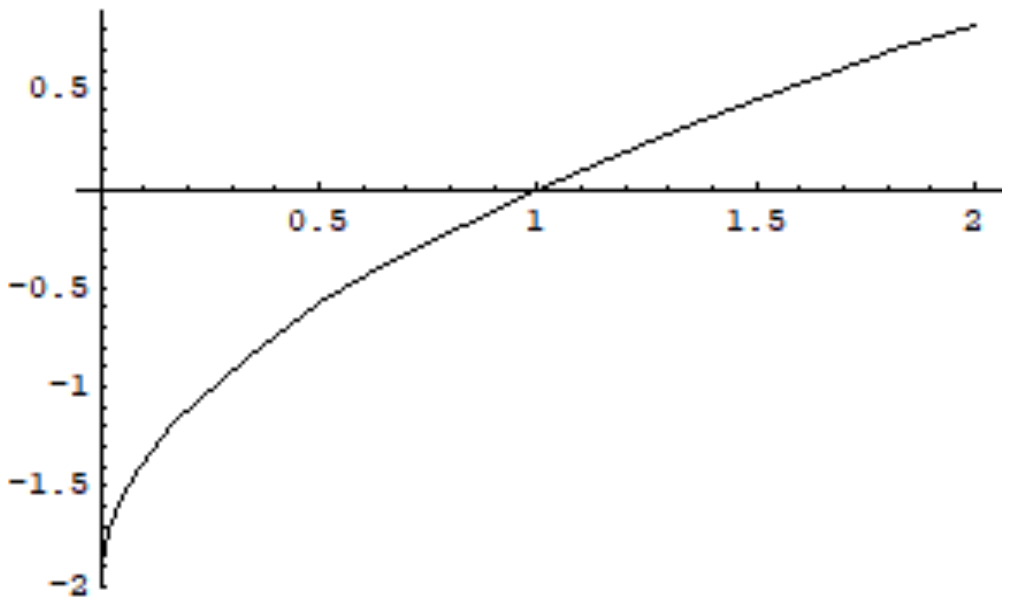}
	\qquad
	\includegraphics[scale=0.31]{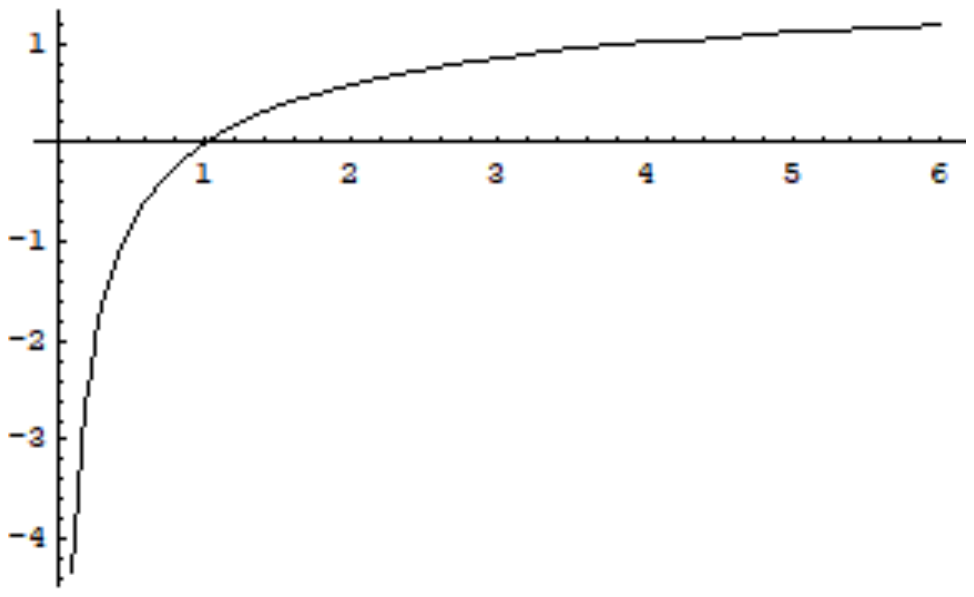}
	\caption{Graph of the function $\log_q$ in case $q=0.5$ (left) and $q=-0.5$ (right). The domain is $(0,\infty)$.}
	\label{fig6}
\end{figure}
Moreover, for each $0<q<1$ the function $u \to \log_q (u)$ is concave, $\log_q (u)<0$ for every $0<u<1$, and it satisfies
\begin{equation} \label{goodeq}   \log_q (a b) = \log_q (a) + \log_q (b) + (1-q)\, \log_q (a)\,\,\log_q (b)
\end{equation}
and
\begin{equation} \label{goodeq1}   \log_q \Big(\frac{1}{p}\Big)  = - p^{q -1} \log_q (p).
\end{equation}

For $0<q<1$, $0<x<1$, the function $- \log x > \frac{1}{1- q}(x^{q-1} -1)$; therefore the equilibrium-Boltzman (nondynamical) entropy $ h(p)=- \sum_j p_j \log p_j $ satisfies
\begin{equation} \label{hH21} h(p) \geqslant H_q(p).
\end{equation}
In the case $q>1$, given $0<x<1$ one has  $- \log x < \frac{1}{1- q}(x^{q-1} -1)$, hence 
\begin{equation} \label{hH212} h(p) \leqslant H_q(p).
\end{equation}
When $q>0$, the largest value of the $q$-entropy map is attained on the uniformly distributed probability vector $p_0=(1/n,1/n,..,1/n)$ and it is equal to
$$H_q (p_0)= \frac{1}{1-q} (n^{1-q} -1) = \log_q (n).$$

\begin{example}[$q$-entropy of Markov measures] \label{maio}
	A two by two line stochastic matrix $P$ with all positive entries  $P= (P_{i,j})_{i,j=1,2}$  determines a unique stationary Markov measure $\mu$ on
	$\{1,2\}^\mathbb{N}$. As $P_{11} + P_{21}=1= P_{12} + P_{22}$, these probability measures $\mu$ are indexed by $P_{12},P_{2,1}\in (0,1) \times (0,1)$. The vector of probability $\pi=\binom{\pi_1}{\pi_2}$ is the one such that $\pi P =\pi$. The probability of the cylinder  set $\overline{ij}$ is $\pi_i P_{i,j}$.    For a stationary Markov measure $\mu$, the Jacobian $J$ in the cylinder $\overline{ij}$ is equal to $Q_{ij}=P_{ij} \frac{\pi_i}{\pi_j}$. The classical entropy of $\mu$ is given by
	$h(\mu)=-\sum_{i,j=1}^2 \pi_i P_{ij} \log P_{ij}.$ In the non-extensive case, for $q>0$, we get the concave function
	$$ H_q(\mu) =   \sum_{i,j=1}^2 \pi_i P_{ij}   \log_q (\frac{1}{Q_{ij}}).$$
	Figure \ref{figqent} below shows the
	graph of the values of the concave  function $H_q(\mu) $, as a function of $P_{12},P_{2,1}\in (0,1) \times (0,1)$, when $q=0.9$. We point out that for values $q>1$ the function $H_q(\mu) $ is also concave.
	\begin{figure}[h!]
		\centering
		\includegraphics[scale=0.45]{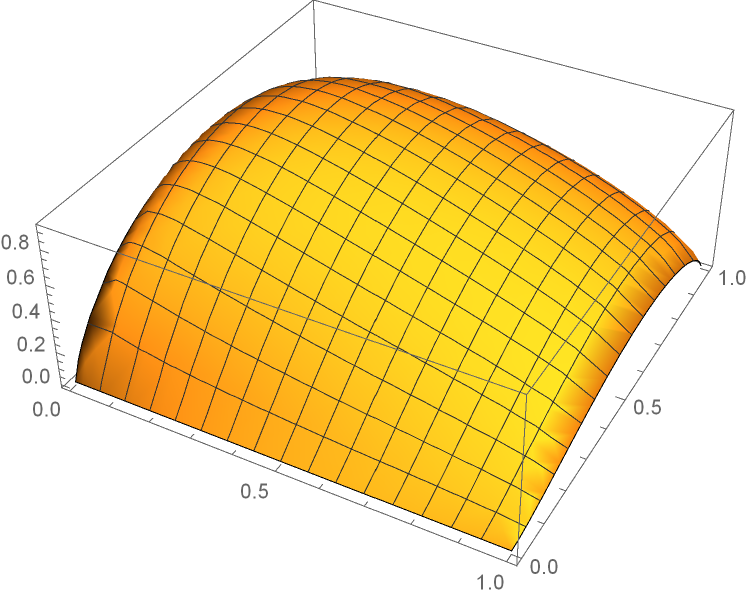}
		\caption{A Markov stationary probability is determined by the values $P_{12}, P_{21}$ of a line stochastic matrix $P$. Above the graph of the $q$-entropy  $H_q(\mu)$, when $q=0.9$, as a function of $(P_{12},P_{21})$. The domain of the concave function is $(0,1)\times (0,1)$}
		\label{figqent}
	\end{figure}
\end{example}

\begin{remark}
	Given two probability measures $p=(p_1,p_2,...,p_n),q=(q_1,q_2,...,n_n)$, by Jensen inequality
	\begin{equation*} \label{rt} \sum_j q_j \log_q (\frac{p_j}{q_j}) \leqslant  \log_q ( \sum_j q_j \frac{p_j}{q_j}  )= \log_q (1)=0.
	\end{equation*}
	In particular, the probability vectors $p_i=(0, \dots 0, 1, 0, \dots, 0)$ have zero $p$-entropy.  
\end{remark}

\subsection*{$q$-exp functions}

For $q>0$, with $q\neq 1$, the inverse of $\log_q$ is the $q$-exp function. This concept is necessary for the definition of the $q$-Ruelle operator.
Recall that the $q$-exp function is defined by
\begin{equation*} \label{T1} u \to  e_q^u=\exp_q (u) =  (1  + (1-q) u) ^{\frac{1}{ 1-q}},
\end{equation*}
for every $q\neq 1$ and $u>0$, 
that it is convex and $\exp_q (0)=1$ and the image of the function $\exp_q $ is $(0,\infty)$.  For the graph of  $\exp_q $ see  Figure \ref{fig11}.

\begin{figure}[h!]
	\centering
	\includegraphics[scale=0.25]{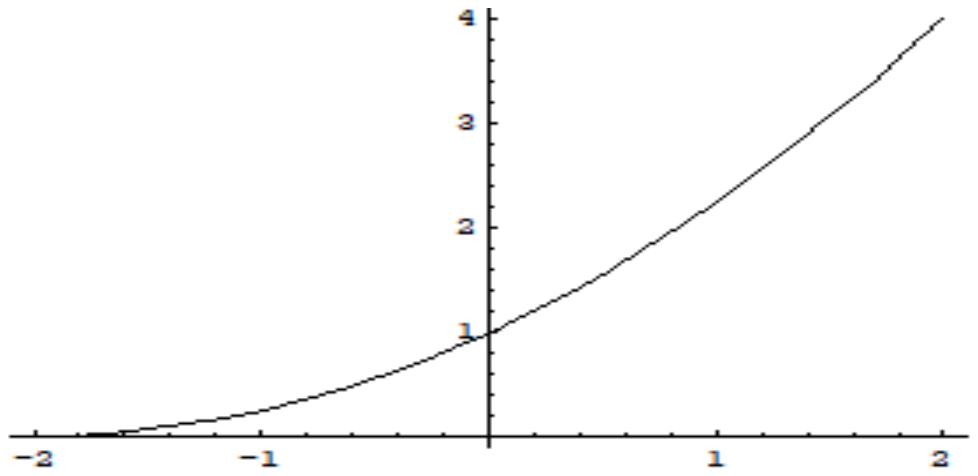}
	\qquad
	\includegraphics[scale=0.25]{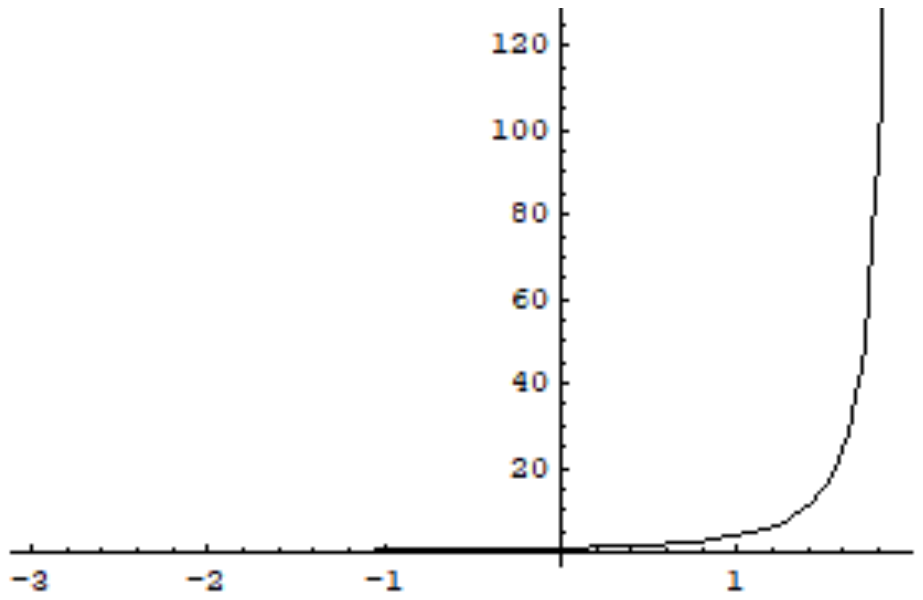}
	\caption{Graph of $\exp_q$. On the left $q=0.5$ and its domain is $(-2,\infty)$. On the right $q=-0.5$ and its domain is $(-\infty,2)$.}
	\label{fig11}
\end{figure}
Moreover, function $e_q^x$ is the  solution of the ordinary differential equation $\frac{d y(x)}{d x} = y(x)^q $ with initial condition $y(0)=1$, and 
\begin{equation} \label{T111}
\exp_q ( - \log_q (1/x))\leqslant x
\end{equation} 
and it is not the identity map.

\begin{remark} \label{kw} The following holds:
	\begin{enumerate}
		\item If $q>1$: $\exp_q (u)>0$ for every $u<\frac{1}{q-1}$;
		\item If $0<q<1$: $\exp_q (u)>0$ for every
		$u<\frac{1}{q-1}$
	\end{enumerate}
	
	When $q<1$ and $q$ close to $1$, we get that $\exp_q (u)>0$ for very negative  values of $u$.
\end{remark}

\subsection{Some properties}
In what follows we list some of the relevant properties of the $q$-exponential functions, some of which used in the text,
which illustrate the origin of the non-additivity of the non-extensive entropy function.
\begin{align}
\label{prop1} 
& a) \,\,\, e^z_q\,\, e^{-z}_{2-q} =
(e^z_q)^q  \,\,e^{-q\,z}_{\frac{1}{q}}=1  \\
& 
\label{prop2} b) \,\,\, e^{x + y + (1-q) x\,y}_{q} =
e^x_q \, \,e^y_q\,\, \\
& \label{prop2a} c) \,\,\, e_q^{a+b} - e_q^a =-\,[1 + a (1 - q)]^{\frac{1}{1-q}} \,+\,[1 + (a + b) (1 - q)]^{\frac{1}{1 - q}} \\
&
\label{prop2b} d) \,\,\, \frac{e_q^{a+b}}{e_q^a}=[1 + a (1 - q)]^{-\frac{1}{1-q}} \,[1 + (a + b) (1 - q)]^{\frac{1}{1 - q}} \\
& 
\label{prop3} e) \,\,\,
(e^x_q)^{-1} \,-  \,e^{-x}_q \geqslant 0, \,\,\text{if}\,\, x>1,\,\,\text{and}\,\,(e^x_q)^{-1} \,-  \,e^{-x}_q \leqslant 0, \,\,\text{if}\,\, x<1 \\
& 
\label{prop4} f)\,\,\, \frac{d}{d x} \log_q(x) = \frac{1}{x^q} \\
& 
\label{prop5} g)\,\,\, \frac{d}{d x} e_q^x =(e_q^x)^q  \hspace{11cm} {\color{white} .}\\
&
\label{prop6} h)\,\,\,
P_j= \frac{p_j^q}{\sum_{k=1}^d p_k^q} \Leftrightarrow p_i^q= \frac{P_i}{(\sum_{j=1}^d P_j^{1/q})^q} \\
& 
\label{prop7} i)\,\,\,
P_j= \frac{p_j}{\sum_{k=1}^d p_k} \Leftrightarrow p_i= \frac{P_i}{(\sum_{j=1}^d P_j^{1/q})^q}
\\
& \label{prop8} j)\,\,\,
(e_q^x)^a = e^{ a\, x}_{1- \frac{1-q}{a}}, \;\;\text{for any $a$ and $q$}\\
& \label{prop9} k)\,\,\,
\log_q (x) = \frac{(1 + (1-m) \log_m (x))^{\frac{1- q }{1-m   } }}{   1- q },\;\; \text{for any $m$ and $q$} \\
& \label{prop10} l)\,\,\, \text{(Taylor expansion)}\,\,\,\,
e_q^x = 1 +\sum_{n=1}^\infty  \frac{1}{n\, !} \,Q_{n-1} (q) \, x^n
\;\; \text{for any $q>0$} \\ 
& \nonumber \text{  where $Q_{n-1} (q)= q\, (2 q -1) (3 q -2) ...[n\,q - (n-1)]$}  \hspace{5.6cm}{\color{white} .}\\
&
\label{prop10} m)\,\,\, \text{(Taylor expansion)}\,\,\,\,
\log_q (1 +x)= x  +\sum_{n=2}^\infty   (-1)^{n+1  } \frac{1}{n\, !}\, \Pi_{j=0} ^{n-2}\,(q + j)  \,x^n \\
& \nonumber \text{for any $q>0$} \\
&
\label{prop11} n)\,\,\,\, \text{(First derivative)}\,\,\,\,
\frac{ d\,(e_q^{\alpha+ \beta}- e_q^\alpha) }{d \beta} =[1+(\alpha+\beta)(1-q) ]^{-1+\frac{1}{1-q}}, \; \; q>0 \\
& 
\label{prop12} o)\,\,\,\,\, \text{(Second derivative)}\,\,\,\,
\frac{ d^2\,(e_q^{\alpha+ \beta}- e_q^\alpha) }{d^2 \beta} = (-1+\frac{1}{1-q})\,(1-q)\,[1+(\alpha+\beta)(1-q)]^{-2+\frac{1}{1-q}} \\
& \nonumber \text{for any $q>0$} \\
& 
\label{prop13} p)\,\,\,\,\,  \,\,\,\,
e_q^a \, e_{2-q}^b = e_{2-q}^{  -2 (-1 + \frac{|b-2|}{|2+a|} ) } \\
& 
\label{prop14} q)\,\,\,\,\,  \,\,\,\,
- \log_q (\frac{1}{e_{2-q}^y})= \frac{-1 + (1 + y\, (q-1))^{-1} }{q-1 }, \\
& 
\label{prop15} r)\,\,\,\,\,  \,\,\,\,
e^{x+y}_{2-q} = e_q^{\frac{-1 + e^{y (q-1) } (1 + (q-1) (x + y))^{-1}}{1 - q}}\, e^y, \\
& 
\label{prop16} s)\,\,\,\,\,  \,\,\,\,
e_{2-q}^{ - \log_q (1/y)}=y \\
& 
\label{prop17} t)\,\,\,\,\,  \,\,\,\,
\frac{d}{ ds} e_{2-q}^{ f(s)   } = (1 + (q-1) (f(s)  ))^{\frac{ 2-q}{q-1} } ( f^\prime (s)    )
\end{align}

\subsection*{Acknowledgements}
We would like to thank L. Cioletti  for providing us with references \cite{Tsa1,Tsa0}, and encouraging us to study this topic.
AOL was partially supported by CNPq-Brasil.
 PV was partially supported by CIDMA, through FCT -- Funda\c c\~ao para a Ci\^encia e a Tecnologia, I.P., under the projects with references 
 UID/04106/2025 (https://doi.org/\-10.54499/UID/04106/2025)
 and UID/PRR/04106/2025
(https://doi.org/\-10.54499/\-UID/\-PRR/04106/2025).

\medskip


\medskip

\smallskip

\noindent Artur O. Lopes\\ Instituto de Matem\'atica e Estat\'istica - UFRGS \\
91509-900 Porto Alegre, Brazil.\\
E-mail: arturoscar.lopes@gmail.com

 \medskip

\noindent 
Paulo Varandas \\ Departamento de Matem\'atica
da Universidade de Aveiro \\
Campus Universitário de Santiago \\ 3810-193 Aveiro, Portugal  \\
E-mail: paulo.varandas@ua.pt


\begin{thebibliography}{99}
	
\bibitem{Abe}	S. Abe, \emph{Remark on the escort distribution representation of non-extensive
statistical mechanics}, Physics Letters A 275 (2000)  250--253.

\bibitem{Abe2}	S. Abe,  \emph{Axioms and uniqueness theorem for Tsallis entropy}, Physics Letters A 271 (2000) 74–-79.

\bibitem{Abe3}	S. Abe,
\emph{Temperature of non-extensive systems: Tsallis entropy as Clausius entropy},
Phys. A 368:2 (2006) 430--434.

\bibitem{ABH}
J. M. Amig\'o,  S. G. Balogh and S. Hernandez, \emph{A Brief Review of Generalized Entropies}, Entropy, 20:11 (2018) 813.


\bibitem{An} J. An and K-H Neeb,
\emph{An implicit function theorem for Banach
spaces and some applications}, Math. Z. 262 (2009) 627.


\bibitem{Bala} V. Baladi, \emph{Positive Transfer Operators and decay of correlations}, World Scientific (2000)

\bibitem{BalaS} V. Baladi and  D. Smania,
\emph{Linear response formula for piecewise expanding unimodal maps}, Nonlinearity 21:4 (2007)  677.

\bibitem{BalaT} V. Baladi and M. Todd,
\emph{Linear response for intermittent maps},
Comm. Math. Phys. 347:3 (2016) 857--874.

\bibitem{BHVZ}
T. Bomfim, R. Huo, P. Varandas and Y. Zhao,
\emph{Typical properties of ergodic optimization for asymptotically additive potentials,} {Stoch. Dynam.}  23: 1 (2023) 2250024.

\bibitem{Bow}
R. Bowen,
\emph{Gibbs States and the Ergodic Theory of Anosov Diffeomorphisms}, Lecture notes in Math., volume 470, Springer-Verlag, (1975)

\bibitem{Barr0} L. Barreira,
\emph{Thermodynamic Formalism
and Applications
to Dimension Theory}, Ed. Birkhauser (2011)

\bibitem{Barre} L. Barreira,
\emph{Ergodic Theory, Hyperbolic Dynamics and Dimension Theory}, Springer Verlag (2012)


 \bibitem{BLT} A. Baraviera, A. O. Lopes and Ph. Thieullen,
\emph{A Large Deviation Principle for equilibrium states of H\"older potentials: the zero temperature case.} { Stoch. and  Dyn.}\, (6), 77-96, (2006).

\bibitem{BecS} C. Beck and F. Schogl,
\emph{Thermodynamics of Chaotic Systems: An Introduction}, Cambridge Univ. Press (1993)

\bibitem{BCMV}  A. B\'is, M. Carvalho, M. Mendes and P. Varandas, \emph{A convex analysis approach to entropy functions, variational principles and equilibrium states}, {Comm. Math. Phys.} 394 (2022) 215--256.
Correction: Commun. Math. Phys. 401  (2023)
3335--3342.

\bibitem{BCV}T. Bomfim, A. Castro and P. Varandas, \emph{Differentiability of thermodynamic quantities in
non-uniformly expanding dynamics}, Adv. Math. 292 (2016) 478--528.

\bibitem{CFH}
Y. Cao, D. Feng and W. Huang. \emph{The thermodynamic formalism for sub-additive potentials.} Discrete Cont.
Dyn. Syst. 20 (2008) 639--657.



\bibitem{Cat} A. Caticha,
\emph{Entropic Physics: Lectures on Probability, Entropy and Statistical Physics}, Preprint version (2021)
	

\bibitem{CDLS} L. Cioletti, M. Denker, A. O. Lopes and M. Stadlbauer,
Spectral Properties of the Ruelle Operator for Product Type Potentials on Shift Spaces,
J. London Math. Soc., 95:2 (2017) 684--704.

\bibitem{CLS}  L. Cioletti, A. O. Lopes and  M. Stadlbauert, \emph{Ruelle Operator for Continuous Potentials and DLR-Gibbs Measures},
Discrete Cont. Dyn. Sys. 40:8  (2020)
4625--4652.

\bibitem{CL1}  L. Cioletti and A. O. Lopes,
\emph{Phase Transitions in One-dimensional Translation Invariant Systems: a Ruelle Operator Approach,}
J. Stat. Phys., 159:6 (2015) 1424--1455.


\bibitem{CL3} L. Cioletti and A. O. Lopes,  \emph{Correlation Inequalities and Monotonicity Properties of the Ruelle Operator,} Stoch. Dyn., 19:6 (2019) 1950048.


\bibitem{CS}  L. Cioletti,  and E. Silva,  \emph{Spectral properties of the Ruelle operator on the Walters class over compact spaces}. Nonlinearity 29:8  (2016)
2253--2278.

\bibitem{CS} A. Creaco and N. Kalogeropoulos,	\emph{Power-law entropies for continuous systems and
generalized operations},
Modern Physics Letters B, 32:14 (2018) 1850338.
	
\bibitem{CLO} G. Contreras, A.O. Lopes and E. Oliveira, 	\emph{Ergodic Transport Theory, periodic maximizing probability measures and the twist condition,
Modeling, Optimization}, Dynamics and Bioeconomy I, Springer Proceedings in Mathematics and Statistics, Volume 73, Edit. David Zilberman and Alberto Pinto, 183-219 (2014)
	
\bibitem{Curado} 	 E. M. F Curado and C. Tsallis, \emph{Generalized statistical mechanics: connection with
thermodynamics},  J. Phys. A
24 L69 (1991); Corrigenda: 24, 3187 (1991) and 25, 1019 (1992)

\bibitem{Curado1} E. M. F Curado  and A. Plastino,
\emph{Information theory link between MaxEnt and a key
thermodynamic relation}, Physica A 386 155–-166 (2007)


\bibitem{DWY} M. F. Demers, P. Wright and L.S. Young,
\emph{Entropy, Lyapunov exponents and escape rates in open systems,}
Ergod. Th. Dynam. Sys., 32:4 (2012)  1270--1301.

\bibitem{DF} N. Dunford and T. Schwartz, Linear Operators  Part I. Interscience (1958)


\bibitem{Falconer}	K. J. Falconer
\emph{A sub-additive thermodynamic formalism for mixing
repellers,}
J. Phys. A  21 (1988) L737.
	
\bibitem{Fiorenza} R. Fiorenza,
\emph{Lipschitz and locally H\"older Continuous Functions, and Open Sets of Class $C^k$, $C^{k,\lambda}$,} Frontiers in Mathematics, Springer Verlag, 2016.

\bibitem{FV} R. Freire and V. Vargas. \emph{Equilibrium states and zero temperature limit
on topologically transitive countable Markov shifts.} Trans. Amer. Math.
Soc., 370:12 (2018) 8451--8465.

\bibitem{FL} A. Fisher and A. O. Lopes,
\emph{Exact bounds for the polynomial decay of
correlation, $1/f$ noise and the central limit theorem for a
non-H\"older Potential},
Nonlinearity, 14:5 (2001) 1071--1104.

\bibitem{HV}  N. Haydn and S. Vaienti,
\emph{The Renyi entropy function and the large deviation of short return times.} Ergodic Th. Dynam. Sys. 30:1 (2010) 159--179.

\bibitem{HaCha} J. Havrda and F. Charvat, Quantification Method of Classification Processes, Kibernetica Cislo Rocknic 3, 30-34 (1967)
Processes

\bibitem{HYH} J. Huang, W. Yong and  L. Hong,
\emph{Generalization of the Kullback–-Leibler divergence in the Tsallis
statistics,} J. Math. Analysis App., 436:1 (2016)  501--512.


\bibitem{Keller}
G. Keller,
\emph{Equilibrium states in ergodic theory,}
Cambridge University Press, 1998.


\bibitem{Kif}
Y. Kifer, Large Deviations in Dynamical Systems and Stochastic processes, TAMS, Vol 321, N.2, 505-–524 (1990)

\bibitem{KLOE}
B. R. Kloeckner,
\emph{Effective perturbation theory
for linear operators, }
J. Operator Theory 81:1 (2019) 175--194.

\bibitem{KW}
T. Kucherenko and C. Wolf,
\emph{Localized pressure and equilibrium states,} J. Stat. Phys.,
160:6  (2015) 1529--1544.


\bibitem{GKLM} P. Giulietti, B. Kloeckner,  A. Lopes and D. Marcon,
 \emph{The calculus of thermodynamic formalism}, J. Eur. Math. Soc., 20:10 (2018) 2357--2412.


\bibitem{GLM} D. Gomes, A. O. Lopes and J. Mohr,
\emph{The Mather measure and a Large Deviation Principle for the Entropy Penalized Method}, Comm. Contemporary Math., 13:2 (2011) 235--268.

\bibitem{GL} S. Gouezel and C. Liverani,
\emph{Banach spaces adapted to Anosov systems}, Ergodic Th. Dynam. Sys. 26, (2006) 189--217.

\bibitem{Lall2} S. Lalley,
\emph{Ruelle's Perron-Frobenius Theorem and the Central Limit Theorem for Additive Functionals of One-Dimensional equilibrium States,}
 Lecture Notes-Monograph Series, Vol. 8, Adaptive Statistical Procedures and Related Topics, pp. 428--446 (1986).

 \bibitem{Lang} S. Lang,  \emph{Fundamentals of Differential Geometry}, Springer Verlag, 1999.




   \bibitem{Lep1}
R. Leplaideur,  \emph{Chaos: butterflies also generate phase transitions.} J. Stat. Phys. 161:1 (2015) 151--170.


 \bibitem{RW}
 R. Leplaideur and F. Watbled,
 \emph{Curie-Weiss Type Models for General Spin Spaces and
Quadratic Pressure in Ergodic Theory}, J. Stat. Phys., 181 (2020) 263-–292.



\bibitem{Lo1} A. O. Lopes,
\emph{The Zeta Function, Non-Differentiability
of Pressure and The Critical Exponent of Transition}, Adv. Math., 101
(1993) 133--167.

\bibitem{LPT} A. O. Lopes,
\emph{Dimension Spectra and a Mathematical Model for Phase Transitions},
Adv. Appl. Math. 11 (1990)  475--502.

\bibitem{L3} A. O. Lopes,
\emph{Entropy and Large Deviation,} Nonlinearity, 3:2
(1990) 527--546.


\bibitem{LopR} A. O. Lopes
\emph{A general renormalization procedure on the one-dimensional lattice and decay of correlations}, Stoch. Dyn., 19:1  (2019)  1950003.




 \bibitem{LOS} A. O. Lopes,  E. R. Oliveira and D. Smania,
 \emph{Ergodic Transport Theory and Piecewise Analytic Subactions for Analytic Dynamics,}
Bull. Braz. Math Soc., 43:3 (2012) 467--512.


\bibitem{LOT}
A. O. Lopes, E. Oliveira and Ph. Thieullen,
\emph{The Dual Potential, the involution kernel and Transport in Ergodic Optimization},
on ``Dynamics, Games and Science'' - International Conference and Advanced School Planet Earth DGS II, Edit. J-P Bourguignon, R. Jelstch, A. Pinto and M. Viana, Springer Verlag, pp 357-398 (2015)


\bibitem{LR} A. O. Lopes and R. Ruggiero,
\emph{Nonequilibrium in Thermodynamic Formalism: the Second Law, gases and Information Geometry}, Qual. Th. Dynam. Sys. 21 (2022) 1--44.


\bibitem{LR1} A. O. Lopes and R. Ruggiero, \emph{The sectional curvature of the infinite dimensional manifold of H\"older equilibrium states},  Proc. of the Edinburg Matematical Soc. Volume 68 - Issue 2 - 348-402 (2025)

\bibitem{LR2} A. O. Lopes and R. Ruggiero, \emph{
Geodesics and dynamical information projections on the manifold of H\"older equilibrium probability measures}  Stoch. and Dynamics, Vol. 25, No. 03n04, 2550022 (2025)




\bibitem{LMMS} A. O. Lopes, J. K. Mengue, J. Mohr and  R. R. Souza,
\emph{Entropy and Variational Principle for  one-dimensional Lattice Systems with a general a-priori probability: positive and zero temperature},  Ergodic Th. Dynam. Sys. 35:6 (2015) 1925--1961.


\bibitem{LM1} A. O. Lopes and J.  Mengue,
\emph{On information gain, Kullback-Leibler divergence, entropy production and the involution kernel}, Discrete Cont. Dynam. Sys 42:7 (2022) 3593--3627.

\bibitem{Lop} A. O. Lopes,
\emph{Thermodynamic Formalism, Maximizing probability measures and Large Deviations}, Lecture notes (online).

\bibitem{MeVe} A. M\'eson and F. Vericat,
\emph{On the Kolmogorov-like generalization of
Tsallis entropy, correlation entropies and
multifractal analysis}, J. Math. Phys. 43, 904 (2002)


\bibitem{Nau}  J. Naudts, \emph{Generalised Thermostatistics}, Springer Verlag, 2011.


\bibitem{PP}
W. Parry and M.  Pollicott.
\emph{Zeta functions and the periodic
orbit structure of hyperbolic dynamics,} {Ast\'erisque}
Vol {187-188} (1990).

\bibitem{Poll}
M.  Pollicott,
\emph{A complex Ruelle-Perron-Frobenius theorem and two counterexamples},
Ergodic Th. Dynam. Sys., 4:1 (1984) 135--146.



\bibitem{Ros}
 P. Rosenbloom, \emph{Perturbation of Linear operators in Banach Spaces}, Arch.
Math. 6 (1955) 89--101.

\bibitem{Rue} D. Ruelle, \emph{Thermodynamic Formalism}, Cambridge University Press (2010)

	\bibitem{Sa}  T. Sagawa,
\emph{Entropy, Divergence, and Majorization in Classical and Quantum Thermodynamics},
Volume 16 of SpringerBriefs in Mathematical Physics, Springer Nature, 2022.

\bibitem{Sar} O. M. Sarig,	\emph{Thermodynamic formalism for countable Markov shifts}, Lecture Notes.

\bibitem{Steele}
 M. J. Steele, \emph{Probability theory and combinatorial optimization}, SIAM, 1997.

\bibitem{UmaTsa} S. Umarov and C. Tsallis,
\emph{Mathematical Foundations of non-extensive Statistical Mechanics}, World Scientific, 2022.



	
\bibitem{Tsa1} C. Tsallis,	
\emph{Possible Generalization of
Boltzmann--equilibrium Statistics}, J. Stat. Phys., 52:1/2 (1988) 479--487.
	
\bibitem{Tsa0} C. Tsallis,
\emph{Beyond Boltzmann-equilibrium-Shannon in Physics
and Elsewhere}, Entropy, 21, 696 (2019)

\bibitem{Tsa} C. Tsallis,
\emph{Introduction to non-extensive Statistical Mechanics}, Springer, New York, (2009)


\bibitem{Oka} C. Tsallis,
\emph{Non-extensive Statistical Mechanics and Thermodynamics:
Historical Background and Present Status}, in Non-extensive
Statistical Mechanics and Its Applications, Lect. Notes in Physics, editors
S. Abe  and Y. Okamoto, Springer Verlag  (2001)

\bibitem{Tsa3} C.Tsallis,  R. S. Mendes and A.R. Plastino,
\emph{The role of constraints within generalized
non-extensive statistics}, Physica A 261,  (1998) 534--554.



\bibitem{TaVe}
F. Takens and E. Verbitski,
\emph{Generalized entropies: Renyi and correlation integral
approach}, Nonlinearity 11, 771–782 (1998)


\bibitem{VZ}
P. Varandas and Y. Zhao,
\emph{Weak Gibbs measures: speed of convergence to entropy, topological and geometrical
aspects}, Ergodic Th. Dynam. Sys. 37, no. 7, 2313-–2336 (2017)

\bibitem{Walters} P. Walters,
\emph{An introduction to Ergodic Theory,} Springer Verlag (1982)





\bibitem{Yam}
T. Yamano, \emph{Some properties of q-logarithm and q-exponential functions in Tsallis statistics}. Phys. A 305, no. 3--4, 486–-496  (2002)

\end{thebibliography}
\end{document}